\newcommand{\RE}{\mathbb R}
\newcommand{\f}{{\mathbf f}} 
\newcommand{\bu}{{\mathbf u}}
\newcommand{\bv}{{\mathbf v}}
\newcommand{\bV}{{\mathbf V}}
\newcommand{\bw}{{\mathbf w}}
\newcommand{\bW}{{\mathbf W}}
\newcommand{\bz}{{\mathbf z}}
\newcommand{\bJ}{{\mathbf J}}
\newcommand{\bphi}{\boldsymbol\phi}
\newcommand{\bpsi}{\boldsymbol\psi}
\newcommand{\balpha}{\boldsymbol\alpha}
\newcommand{\bn}{{\mathbf n}}
\newcommand{\bg}{{\mathbf g}}
\newcommand{\bL}{{\mathbf L}}
\newcommand{\bC}{{\mathbf C}}
\newcommand{\bH}{{\mathbf H}}
\newcommand{\diver}{\mathop{\rm{div}}\nolimits}
\newcommand{\bjump}[1]{\left[\!\!\left[#1\right]\!\!\right]}
\newcommand{\Ldo}{L^2_0(\Omega)}
\newcommand{\T}{\mathcal{T}}
\newcommand{\E}{\mathcal{E}}
\newcommand{\Gh}{\mathbf{G}_h}
\newcommand{\Ne}{N_{e}}
\newcommand{\hg}{h_\Gamma}
\newcommand{\tn}{|\!|\!|}
\newcommand{\normg}[1]{\tn\bg\tn_{#1,\Gamma}}
\newtheorem{theorem}{Theorem}[section]
\newtheorem{lemma}{Lemma}[section]
\newtheorem{proposition}{Proposition}[section]
\newtheorem{remark}{Remark}[section]
\newtheorem{problem}{Problem}[section]
\begin{document}
\title[Stokes equation with non-smooth data]
{Analysis of finite element approximations of Stokes equations
with non-smooth data}
\author{Ricardo G. Dur\'an}
\address{Departamento de Matem\'atica, Facultad de Ciencias Exactas y Naturales,
Universidad de Buenos Aires and IMAS, CONICET, 1428 Buenos Aires,
Argentina} \email{rduran@dm.uba.ar}
\urladdr{http://mate.dm.uba.ar/~rduran/}
\author{Lucia Gastaldi}
\address{DICATAM, Universit\`a di Brescia, Italy}
\email{lucia.gastaldi@unibs.it}
\urladdr{http://lucia-gastaldi.unibs.it}
\author{Ariel L. Lombardi}
\address{Departamento de Matem\'atica, Facultad de Ciencias Exactas, Ingenier\'\i a y Agrimensura,
Universidad Nacional de Rosario, and CONICET, Av. Pellegrini 250, 2000 Rosario, Argentina} \email{
ariel@fceia.unr.edu.ar} \urladdr{https://fceia.unr.edu.ar/~ariel/}
\thanks{The first and third authors are supported by ANPCyT under grant PICT2014-1771, by CONICET under grant PIP 11220130100184CO and by Universidad de
Buenos Aires under grant 20020160100144BA.
Second author gratefully acknowledges the hospitality of University of Buenos
Aires (Department of Mathematics) during her visit on March 2017 under the
project SAC.AD002.001.003/ ARGENTINA - CONICET - 050.000. She is partially funded
by IMATI-CNR and GNCS-INDAM.
Third author acknowledges IMATI (Pavia) and DICATAM, University of Brescia, for
the hospitality during his visit in November 2018 supported by the bilateral
project CONICET (Argentina) - CNR (Italy) and FLR 2015/2016 (University of
Brescia). He is also supported by Universidad Nacional de Rosario under grant ING568.
}
\subjclass{65N30, 65N15} \keywords{Stokes equations, finite elements, non
smooth data, a posteriori error analysis}

\begin{abstract}
In this paper we analyze the finite element approximation of the
Stokes equations with non-smooth Dirichlet boundary data. To define
the discrete solution, we first approximate the boundary datum by a smooth one
and then apply a standard finite element method to the regularized problem.

We prove almost optimal order error estimates for two regularization
procedures
in the case of general data in fractional order Sobolev spaces, and for the
Lagrange interpolation (with appropriate modifications at the
discontinuities) for piecewise
smooth data. Our results apply in particular to the classic lid-driven
cavity problem
improving the error estimates obtained in \cite{CW}.

Finally, we introduce and analyze an a posteriori error estimator. We prove
its reliability and efficiency, and show some numerical examples which
suggest that
optimal order of convergence is obtained by an adaptive procedure based on
our estimator.
\end{abstract}

\maketitle

\section{Introduction}
\label{se: setting}
\setcounter{equation}{0}

The goal of this paper is to analyze finite element approximations
of the Stokes equations with non smooth Dirichlet boundary data.
For the Laplace equation the analogous problem has been analyzed
in recent years in \cite{ANP,ANP2}.

Before explaining the problem and goals let us introduce some
notation. For $s$ a real number, $1\le p\le\infty$, and $D$ a
domain in $\RE^d$ or its boundary or some part of it, we denote by
$W^{s,p}(D)$ the Sobolev space on $D$, and by $\|\cdot\|_{s,p,D}$
and $|\cdot|_{s,p,D}$ its norm and seminorm respectively (see, for
example,~\cite{Adams,Adams2}). As it is usual, we write
$H^s(D)=W^{s,2}(D)$ and omit the $p$ in the norm and seminorm when
it is $2$. Moreover, bold characters denote vector valued
functions and the corresponding functional spaces. The notation
$(\cdot,\cdot)_D$ stands for the scalar product in $L^2(D)$ as well
as for the duality pairing between a Sobolev space and its dual; when
no confusion may arise the subscript indicating the domain is
dropped.

The subspace of $H^1(D)$ with zero trace on the boundary is
denoted as usual by $H^1_0(D)$, while $L^2_0(D)$ is the subspace
of $L^2(D)$ of functions with zero mean value.

Let $\Omega\subset\mathbf{R}^d$, $d=2,3$, be a Lipschitz domain
with boundary $\Gamma=\partial\Omega$ and denote by $\bn$ the
outward unit vector normal to the boundary.

We consider the Stokes problem
\begin{equation}
\label{eq: Stk_general}
\aligned
-\Delta \bu + \nabla p&= \f&&\mbox{in }\Omega\\
\diver \bu &=\eta&&\mbox{in }\Omega\\
\bu&=\bg&& \mbox{on }\Gamma.
\endaligned
\end{equation}
where $\f$, $\eta$ and $\bg$ are given data. If
$\f\in\bH^{-1}(\Omega)$, $\eta\in L^2(\Omega)$,
$\bg\in\bH^{1/2}(\Gamma)$, and the compatibility condition
\[
\int_\Gamma\bg\cdot\bn=\int_\Omega\eta
\]
is satisfied, existence and uniqueness of solution
$\bu\in\bH^1(\Omega)$ and $p\in L^2(\Omega)/\RE$ is a well known result (see
for example ~\cite[Page 31]{T}). Moreover, the following a priori
estimate holds true,
\begin{equation}
\label{eq: apriori}
\|\bu\|_{1,\Omega}+\|p\|_{L^2(\Omega)/\RE}\le
C\big(\|\f\|_{-1,\Omega}+\|\eta\|_{0,\Omega}+\|\bg\|_{1/2,\Gamma}\big).
\end{equation}

The classic analysis of finite element methods for this problem is
based on the variational formulation working with the spaces
$\bH^1(\Omega)$ for the velocity $\bu$ and $L^2(\Omega)$ for the
pressure $p$. If $\bg\notin\bH^{1/2}(\Gamma)$ then the solution
$\bu\notin\bH^1(\Omega)$, and therefore, that theory cannot be
applied. This situation arises in many practical situations. A
typical example is the so called lid-driven cavity problem where
$\Omega$ is a square and the boundary velocity $\bg$ is a
piecewise constant vector field which has jumps at two of the
vertices, and therefore, does not belong to $\bH^{1/2}(\Gamma)$.
However, this example is used in many papers as a model problem to
test finite element methods using some regularization of $\bg$
(although many times how the boundary condition is treated is not
clearly explained). Error estimates for this particular case were
obtained in \cite{CW,HTZ}. In \cite{CW}, the authors work with
$L^p$ based norms and use that $\bu\in\bW^{1,p}(\Omega)$ for
$1<p<2$. In \cite{HTZ} a particular regularization of the boundary
datum is considered.

More generally, we will consider boundary data
$\bg\in\bL^2(\Gamma)$ using some regularization of $\bg$ to define
the finite element approximation. In this way the a priori error
analysis is separated in two parts: the error due to the
regularization and that due to the discretization. We will analyze
the first error in general, assuming a given approximation of
$\bg$ and considering afterwards some particular regularizations
that can be used in practice.

For piecewise smooth boundary data, as in the case of the
lid-driven cavity problem, it is natural to use as an
approximation to $\bg$ its Lagrange interpolation at continuity
points with some appropriate definition at the discontinuities.
This is a particular regularization and so we can apply our
theory. We will show that this procedure produces an optimal order
approximation for the lid-driven cavity problem improving, in
particular, the result obtained in \cite{CW} where the order was
suboptimal. Let us remark that, since in this example the solution
belongs to $H^s(\Omega)$ for all $s<1$ (see \cite{AMZ,MZ}), the best expected
order for the error in the $L^2$ norm using quasi-uniform meshes is $O(h)$.

In the second part of the paper we introduce and analyze an a
posteriori error estimator of the residual type. We will prove
that the estimator is equivalent to appropriate norms of the
error. Numerical examples will show that an adaptive procedure
based on our estimator produce optimal order error estimates for
the lid-driven cavity problem.

Since \eqref{eq: Stk_general} with $\bg=0$ has been already
analyzed, we restrict ourselves to study the case $\f=0$ and
$\eta=0$, that is,
\begin{problem}
\label{problem 1 1} Given $\bg\in\bL^2(\Gamma)$ with
\begin{equation}
\label{eq: uno} \int_\Gamma \bg\cdot\bn=0,
\end{equation}
find $(\bu,p)$ such that
\begin{equation}
\label{eq: Stokes} \aligned
-\Delta \bu + \nabla p&= 0\qquad\text{in }\Omega\\
\diver \bu &=0\qquad\text{in }\Omega\\
\bu&=\bg\qquad \text{on }\Gamma.
\endaligned
\end{equation}
\end{problem}
The existence and uniqueness of solution is known. Indeed, we have

\begin{proposition}
\label{pr: existence} Let $\Omega$ be a Lipschitz convex polygon
or polyhedron, and $\bg\in\bL^2(\Gamma)$ satisfying the
compatibility condition \eqref{eq: uno}. Then the Stokes
system~\eqref{eq: Stokes} has a unique solution
$(\bu,p)\in\bL^2(\Omega)\times H^{-1}(\Omega)/{\mathbb R}$.

Moreover, there exists a constant $C$, depending only on $\Omega$,
such that
\begin{equation}
\label{eq: L2apriori} \|\bu\|_{0,\Omega}+\|p\|_{H^{-1}(\Omega)/\mathbb
R}\le C\|\bg\|_{0,\Gamma}.
\end{equation}
\end{proposition}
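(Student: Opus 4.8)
The natural approach is the classical transposition (duality) method of Lions–Magenes, adapted to the Stokes system. The idea is to \emph{define} the very weak solution $(\bu,p)$ by testing against solutions of an auxiliary, well-posed dual Stokes problem with smooth right-hand side and homogeneous boundary data, and then to read off existence, uniqueness, and the a priori bound from the regularity theory for that dual problem. Since $\Omega$ is assumed to be a convex polygon or polyhedron, the dual problem enjoys $\bH^2\times H^1$ regularity, which is exactly what is needed to make the boundary integral $(\partial_\bn\bv - q\,\bn,\bg)_\Gamma$ meaningful for $\bg\in\bL^2(\Gamma)$.

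Concretely, here are the steps I would carry out. First, for $\f\in\bL^2(\Omega)$ and $g_0\in L^2_0(\Omega)$ I would introduce the dual problem: find $(\bv,q)\in(\bH^2(\Omega)\cap\bH^1_0(\Omega))\times(H^1(\Omega)/\RE)$ solving $-\Delta\bv+\nabla q=\f$, $\diver\bv=g_0$ in $\Omega$, $\bv=0$ on $\Gamma$, and record the regularity estimate $\|\bv\|_{2,\Omega}+\|q\|_{1,\Omega}\le C(\|\f\|_{0,\Omega}+\|g_0\|_{0,\Omega})$ valid on convex polygons/polyhedra (this is where convexity is used). Second, I would formally integrate by parts in $\int_\Omega(-\Delta\bu+\nabla p)\cdot\bv$ twice, using $-\Delta\bu+\nabla p=0$, $\diver\bu=0$, $\bu=\bg$ on $\Gamma$, and $\bv=0$, $\diver\bv=g_0$, to arrive at the identity
\begin{equation}
\label{eq: transpo}
(\bu,\f)_\Omega - (p,g_0)_\Omega = -\big(\bg,\,\partial_\bn\bv - q\,\bn\big)_\Gamma .
\end{equation}
This motivates \emph{defining} $(\bu,p)$ through \eqref{eq: transpo}: the right-hand side is a bounded linear functional of $(\f,g_0)\in\bL^2(\Omega)\times L^2_0(\Omega)$, because $\|\partial_\bn\bv\|_{0,\Gamma}+\|q\|_{0,\Gamma}\le C\|\bv\|_{2,\Omega}+C\|q\|_{1,\Omega}$ by the trace theorem, hence is bounded by $C\|\bg\|_{0,\Gamma}(\|\f\|_{0,\Omega}+\|g_0\|_{0,\Omega})$. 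Third, I would invoke Riesz representation (after checking that the functional is well-defined, i.e.\ that it does not depend on the particular $(\bv,q)$ beyond $(\f,g_0)$, which follows from uniqueness of the dual solution modulo constants in $q$) to produce a unique $\bu\in\bL^2(\Omega)$ and $p\in L^2_0(\Omega)\subset H^{-1}(\Omega)/\RE$ — or more precisely $p$ as an element of the dual of $L^2_0(\Omega)$, which one identifies with $H^{-1}(\Omega)/\RE$ in the standard way — satisfying \eqref{eq: L2apriori}. Fourth, I would verify that this $(\bu,p)$ indeed solves \eqref{eq: Stokes} in the distributional sense and attains the boundary datum in the appropriate very weak sense, by choosing suitable test pairs (e.g.\ $\f\in\bC_0^\infty(\Omega)$ with $g_0=0$ to get the momentum equation, then $\f=0$ with $g_0$ arbitrary for the divergence constraint, then general $\f$, $g_0$ to recover the boundary condition).

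The main obstacle is the treatment of the pressure and the divergence constraint: one must be careful that the dual problem is solvable only for $g_0$ with zero mean, that $q$ is determined only up to a constant, and that consequently $p$ lives naturally in a quotient/dual space $H^{-1}(\Omega)/\RE$ rather than in $L^2$. Making the pairing $(p,g_0)$ and the space $H^{-1}(\Omega)/\RE$ precise, and checking the compatibility of the Riesz representation with these quotient structures, is the delicate bookkeeping. The boundary trace $\partial_\bn\bv|_\Gamma\in\bL^2(\Gamma)$ also requires the full $\bH^2$-regularity of $\bv$, so the convexity hypothesis on $\Omega$ is essential and cannot be relaxed to a general Lipschitz domain without weakening the conclusion. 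Everything else — the integration by parts, the trace inequality, the Riesz step — is routine once the functional-analytic framework is fixed.
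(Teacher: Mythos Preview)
Your transposition strategy is sound in spirit and genuinely different from the paper's proof, which does not carry out the argument itself but cites \cite{HTZ} (for $d=2$) and \cite{FKV} (for $d=3$, via layer potentials) for the existence and $\bL^2$-bound of $\bu$, and then recovers $p\in H^{-1}(\Omega)/\RE$ directly from $\nabla p=\Delta\bu\in \bH^{-2}(\Omega)$. Your route has the advantage of being self-contained, resting only on the $\bH^2\times H^1$ regularity of the homogeneous-Dirichlet Stokes problem on convex domains (the paper's Proposition~\ref{pr: reg R3}).

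There is, however, a concrete error in your Step~1. The regularity estimate $\|\bv\|_{2,\Omega}+\|q\|_{1,\Omega}\le C(\|\f\|_{0,\Omega}+\|g_0\|_{0,\Omega})$ cannot hold for general $g_0\in L^2_0(\Omega)$: if $\bv\in\bH^2(\Omega)$ then $g_0=\diver\bv\in H^1(\Omega)$ necessarily, so the bound fails for any $g_0\notin H^1$. Correspondingly, your identification in Step~3 of the dual of $L^2_0(\Omega)$ with $H^{-1}(\Omega)/\RE$ is wrong; that dual is $L^2(\Omega)/\RE$. The repair is simple: restrict the divergence test functions to $g_0\in H^1_0(\Omega)\cap L^2_0(\Omega)$. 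For such $g_0$ there exists $\bpsi\in\bH^2_0(\Omega)$ with $\diver\bpsi=g_0$ and $\|\bpsi\|_{2,\Omega}\le C\|g_0\|_{1,\Omega}$ (this is exactly \eqref{aux2}, used later in the paper), so the dual problem does enjoy $\bH^2\times H^1$ regularity with bound $C(\|\f\|_{0,\Omega}+\|g_0\|_{1,\Omega})$, and Riesz on $\bL^2(\Omega)\times\big(H^1_0(\Omega)\cap L^2_0(\Omega)\big)$ then yields $(\bu,p)\in\bL^2(\Omega)\times H^{-1}(\Omega)/\RE$ with the stated a priori bound. Alternatively, and closer to what the paper does, take $g_0=0$ throughout to obtain $\bu\in\bL^2(\Omega)$ by your argument and then read off $p\in H^{-1}(\Omega)/\RE$ from $\nabla p=\Delta\bu$.
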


\begin{proof}

The existence of solution is proved in~\cite{HTZ} in the two dimensional case and
in \cite{FKV} in the three dimensional case. Actually, in \cite{HTZ}
the a priori estimate is proved only for smooth solutions but a standard density
argument, as the one we use below in Proposition \ref{pr: apriori}, can be applied
to obtain the general case.

On the other hand, in
\cite{FKV} it is not explicitly stated that $p\in H^{-1}(\Omega)$.
However, since $\bu\in\bL^2(\Omega)$ it follows immediately that
$\nabla p\in H^{-2}(\Omega)$ from which one can get $p\in
H^{-1}(\Omega)$ and \eqref{eq: L2apriori} (see \cite[page
317]{HTZ} and references therein). Let us also mention that
the method used in \cite{FKV} could also be applied in the two dimensional
case as it was done for the case of the Laplace equation in \cite{V}.
\end{proof}

The rest of the paper is organized as follows. In Section~\ref{se: apriori} we
introduce the finite element approximation which is based in replacing the
boundary datum $\bg$ by smooth approximations $\bg_h$. Then we develop the a
priori error analysis which is divided in two subsections.  In the first one we
estimate the error between the exact solution of the original problem and the
regularized one in terms of $\bg-\bg_h$. In the second subsection, considering
some appropriate computable approximations, we analyze the error due to the
finite element approximation of the regularized problem and prove a theorem
which gives a bound for the total error in terms of fractional order norms of
$\bg$.
Then, in Section \ref{se: pcwisesmooth} we consider the case of piecewise smooth data
approximated by a suitable modification of the Lagrange interpolation.
Section \ref{se: aposteriori} deals with a posteriori error estimates.
We introduce and analyze an error indicator for the regularized problem.
Finally, in Section \ref{se: examples}, we present some numerical examples
for the lid-driven cavity problem using two well known stable methods: the so called Mini element and the Hood-Taylor one.

\section{Finite element approximation and a priori estimates}
\label{se: apriori}
\setcounter{equation}{0}

In this section we introduce the finite element approximation to Problem
\ref{problem 1 1} and prove a priori error estimates.  As we have mentioned, in
general the solution $\bu$ of this problem is not in $\bH^1(\Omega)$ and so the
standard finite element formulation and analysis cannot be applied.
Therefore, to define the numerical approximation, we first approximate the
original problem by more regular ones and then solve these problems by standard
finite elements.  Consequently, our error analysis is divided in two parts that
we present in the following subsections. In the first one we analyze the error
due to the regularization, while in the second one the finite element
discretization error.

Given $\bg\in\bL^2(\Gamma)$, let $\bg_h\in\bH^\frac12(\Gamma)$ be approximations of $\bg$
such that
\begin{equation}
\label{eq: compatibility}
\int_\Gamma\bg_h\cdot\bn=0
\end{equation}
and
\begin{equation}
\label{gh tiende a g}
\lim_{h\to 0}\|\bg-\bg_h\|_{0,\Gamma}=0.
\end{equation}
Here $h>0$ is an abstract parameter which afterwords will be related to the
finite element meshes. The existence of approximations satisfying the
compatibility condition \eqref{eq: compatibility} is not difficult to prove.
Anyway we will construct explicit approximations later on using suitable
interpolations or projections.

For each $h$, we consider the following regularized problem:
find $\bu(h)$ and $p(h)$, such that

\begin{equation}
\label{eq: regu}
\aligned
-\Delta \bu(h) + \nabla p(h)&= 0&&\mbox{in }\Omega\\
\diver \bu(h) &=0&&\mbox{in }\Omega\\
\bu(h)&=\bg_h&&\mbox{on }\Gamma.
\endaligned
\end{equation}

This problem has a unique solution which, in view of \eqref{eq: apriori}, satisfies
\begin{equation}
\label{eq: apriori_reg}
\|\bu(h)\|_{1,\Omega}+\|p(h)\|_{L^2(\Omega)/\RE}\le
C\|\bg_h\|_{1/2,\Gamma}.
\end{equation}

The standard variational formulation of this regularized problem reads:
find $\bu(h)\in\bH^1(\Omega)$ with $\bu(h)=\bg_h$ on $\Gamma$ and
$p(h)\in L^2_0(\Omega)$ such that
\begin{equation}
\label{eq: weak}
\aligned
(\nabla\bu(h),\nabla\bv)-(\diver\bv,p(h))&=0&\forall\bv\in\bH_0^1(\Omega)\\
(\diver\bu(h),q)&=0&\forall q\in L^2_0(\Omega).
\endaligned
\end{equation}

\subsection{Analysis of the error due to the approximation of the boundary datum}

We will make use of the following well known result.

\begin{proposition}
\label{pr: reg R3}
Let $\Omega$ be a convex Lipschitz polygonal or polyhedral domain and
$\f\in \bL^2(\Omega)$. Then the system
$$
\aligned
-\Delta\bphi+\nabla q&=\f&&\mbox{in }\Omega\\
\diver\bphi&=0&&\mbox{in }\Omega\\
\bphi&=0&&\mbox{on }\Gamma.
\endaligned
$$
has a unique solution
$(\bphi,q)\in\bH^2(\Omega)\cap\bH^1_0(\Omega)\times H^1(\Omega)/\RE$ which
satisfies the following a priori estimate
\begin{equation}
\label{eq: est_hom-Dir}
\|\bphi\|_{2,\Omega}+\|q\|_{H^1(\Omega)/\RE}\le C \|\f\|_{0,\Omega}.
\end{equation}
\end{proposition}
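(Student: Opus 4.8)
The plan is to argue in two steps: first produce a weak solution in the natural energy spaces, and then lift its regularity by quoting the elliptic regularity theory of the stationary Stokes system on convex polygons and polyhedra.

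\emph{Step 1: existence and uniqueness of a weak solution.} I would consider the variational problem: find $\bphi\in\bH^1_0(\Omega)$ and $q\in\Ldo$ such that $(\nabla\bphi,\nabla\bv)-(\diver\bv,q)=(\f,\bv)$ for all $\bv\in\bH^1_0(\Omega)$ and $(\diver\bphi,s)=0$ for all $s\in\Ldo$. This is the classical saddle point problem of Babu\v{s}ka--Brezzi type, whose well-posedness requires only the inf-sup condition for the pair $(\bH^1_0(\Omega),\Ldo)$, i.e.\ the surjectivity of $\diver\colon\bH^1_0(\Omega)\to\Ldo$, which holds on every bounded Lipschitz domain (Ne\v{c}as inequality / Bogovskii operator). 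Hence $(\bphi,q)$ exists, is unique, and since $\f\in\bL^2(\Omega)\hookrightarrow\bH^{-1}(\Omega)$ one already gets $\|\bphi\|_{1,\Omega}+\|q\|_{0,\Omega}\le C\|\f\|_{0,\Omega}$. In particular $(\bphi,q)$ solves the Stokes system of the statement, in the sense of distributions, with a square-integrable body force.

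\emph{Step 2: elliptic regularity.} For a convex polygon in two dimensions, or a convex polyhedron in three dimensions, the solution operator of the Stokes system with $\bL^2$ body force and homogeneous Dirichlet condition maps $\bL^2(\Omega)$ continuously into $\bigl(\bH^2(\Omega)\cap\bH^1_0(\Omega)\bigr)\times\bigl(H^1(\Omega)/\RE\bigr)$: in the planar case this is the result of Kellogg and Osborn, and in the three-dimensional case it follows from the corner-and-edge singularity analysis of Dauge (see also Maz'ya and Rossmann), the point being that on a convex domain all admissible singular exponents exceed $1$, so no singular contribution appears below the $\bH^2$ threshold. Applying this to the weak solution of Step~1 gives $(\bphi,q)\in(\bH^2(\Omega)\cap\bH^1_0(\Omega))\times(H^1(\Omega)/\RE)$ together with the a priori bound \eqref{eq: est_hom-Dir}; equivalently, once the regularity is known for every datum, the estimate follows from the closed graph theorem applied to the solution map, which is closed thanks to the uniqueness in Step~1.

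The only genuine difficulty is the global boundary regularity in Step~2: upgrading the classical interior $\bH^2_{\mathrm{loc}}$ estimate and the smooth-boundary case to a global $\bH^2$ bound up to the edges and vertices is precisely where convexity is used, and it is the content of the references quoted above, so here I would simply cite it. I would also point out the self-contained alternative of reducing to the scalar Laplacian: taking the divergence of the momentum equation shows that $q$ solves a Poisson problem of Neumann type, and substituting $\nabla q\in\bL^2(\Omega)$ back into $-\Delta\bphi=\f-\nabla q$ with $\bphi=0$ on $\Gamma$ reduces everything to Grisvard's $H^2$-regularity theory for the Dirichlet and Neumann Laplacian on convex polygons and polyhedra, with the corresponding norm bounds then yielding \eqref{eq: est_hom-Dir}.
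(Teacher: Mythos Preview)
Your proposal is correct and follows essentially the same route as the paper: the paper's proof consists solely of citing Kellogg--Osborn for $d=2$ and Dauge for $d=3$, which are exactly the regularity results you invoke in Step~2. Your Step~1 and the closed-graph and scalar-Laplacian remarks are sound supplementary detail that the paper simply omits, treating the proposition as a direct quotation from the literature.
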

\begin{proof}
This is proved in~\cite[Theorem 2]{KO} for $d=2$
and in~\cite[Theorem 9.20 (b)]{D} for $d=3$.
\end{proof}

The result given in the next lemma is known but we outline the proof
in order to make explicit the dependence of the involved constant on $s$.
We will denote by $\Gamma_i$, $1\le i\le N_e$, the edges or faces of $\Gamma$.

\begin{lemma}
\label{rem: extension}
There exists a constant $C$ independent of $s$ such that, for $0\le s<\frac12$,
\begin{equation}
\label{eq: extension2}
\|f\|_{-s,\Gamma_i}\le \frac C{1-2s} \|f\|_{-s,\Gamma}, \qquad
\forall f\in L^2(\Gamma).
\end{equation}
\end{lemma}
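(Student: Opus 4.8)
The plan is to prove \eqref{eq: extension2} by duality, exploiting the fact that the negative-order norm $\|f\|_{-s,\Gamma_i}$ is defined via testing against functions in $H^s(\Gamma_i)$, and that such test functions can be extended to $\Gamma$ with controlled norm precisely because $s<\frac12$ (so no compatibility conditions at the interfaces between the $\Gamma_i$ are needed). Concretely, I would write, for $f\in L^2(\Gamma)$,
\[
\|f\|_{-s,\Gamma_i}=\sup_{0\neq\varphi\in H^s(\Gamma_i)}\frac{(f,\varphi)_{\Gamma_i}}{\|\varphi\|_{s,\Gamma_i}},
\]
and the goal reduces to: given $\varphi\in H^s(\Gamma_i)$, produce $\widetilde\varphi\in H^s(\Gamma)$ with $\widetilde\varphi=\varphi$ on $\Gamma_i$ and $\|\widetilde\varphi\|_{s,\Gamma}\le \frac{C}{1-2s}\|\varphi\|_{s,\Gamma_i}$, since then $(f,\varphi)_{\Gamma_i}=(f,\widetilde\varphi)_{\Gamma}\le\|f\|_{-s,\Gamma}\|\widetilde\varphi\|_{s,\Gamma}$ and taking the supremum gives the claim.

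The natural choice of extension is the trivial one: set $\widetilde\varphi=\varphi$ on $\Gamma_i$ and $\widetilde\varphi=0$ on $\Gamma\setminus\Gamma_i$. The key step is the quantitative bound on the $H^s$-norm of this zero extension, with the explicit blow-up rate $\frac{1}{1-2s}$ as $s\uparrow\frac12$. This is where I expect the main work to lie. I would estimate the Sobolev–Slobodeckij seminorm
\[
|\widetilde\varphi|_{s,\Gamma}^2=\iint_{\Gamma\times\Gamma}\frac{|\widetilde\varphi(x)-\widetilde\varphi(y)|^2}{|x-y|^{d-1+2s}}\,d\gamma(x)\,d\gamma(y)
\]
by splitting the domain of integration: the part where both $x,y\in\Gamma_i$ gives exactly $|\varphi|_{s,\Gamma_i}^2$; the symmetric cross terms where $x\in\Gamma_i$, $y\notin\Gamma_i$ reduce, after using $\widetilde\varphi(y)=0$, to an integral of $|\varphi(x)|^2$ against $\int_{\Gamma\setminus\Gamma_i}|x-y|^{-(d-1+2s)}\,d\gamma(y)$. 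For $x$ at distance $\delta$ from the boundary of $\Gamma_i$, this inner integral behaves like $\delta^{-2s}$ up to a constant, and the surviving factor $\frac{1}{1-2s}$ comes out of the radial integration $\int_\delta^{\mathrm{diam}} r^{-2s}\,\frac{dr}{r}\cdot r^{\,\cdot}$ near the endpoint (this is the standard Hardy-type estimate $\int_{\Gamma_i}|\varphi(x)|^2\mathrm{dist}(x,\partial\Gamma_i)^{-2s}\,d\gamma(x)\le \frac{C}{1-2s}\|\varphi\|_{s,\Gamma_i}^2$, valid for $s<\frac12$ with the constant independent of $s$). Adding the $L^2$ part, which is trivially bounded, yields $\|\widetilde\varphi\|_{s,\Gamma}\le\frac{C}{1-2s}\|\varphi\|_{s,\Gamma_i}$ with $C$ independent of $s$, and the lemma follows.

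The only delicate point is keeping the constant $C$ genuinely independent of $s$ throughout, isolating all $s$-dependence into the single explicit factor $\frac{1}{1-2s}$; this requires care in the radial estimates above, but since $s$ ranges over the bounded interval $[0,\frac12)$, all geometric constants (depending on $\mathrm{diam}(\Gamma)$, the number and shape of the $\Gamma_i$, and $d$) can be bounded uniformly. For $s=0$ the statement is the trivial inequality $\|f\|_{0,\Gamma_i}\le\|f\|_{0,\Gamma}$, which is consistent with the bound.
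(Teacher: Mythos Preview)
Your proposal is correct and follows essentially the same approach as the paper: duality against $H^s(\Gamma_i)$, zero extension $\widetilde\varphi$ of the test function to all of $\Gamma$, and the bound $\|\widetilde\varphi\|_{s,\Gamma}\le\frac{C}{1-2s}\|\varphi\|_{s,\Gamma_i}$ for $0\le s<\tfrac12$. The only difference is presentational: the paper obtains the extension bound by tracing constants in the proof of \cite[Th.~11.4, Chap.~1]{LM}, whereas you sketch it directly via the Sobolev--Slobodeckij seminorm and a Hardy-type estimate.
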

\begin{proof}
Given $\phi\in H^s(\Gamma_i)$ let $\widetilde\phi$ be its extension by $0$ to
$\Gamma$. Tracing constants in the proof of \cite[Th. 11.4 in Chapt.~1]{LM},
we can show that for $0\le s<\frac12$,
\begin{equation}
\label{extension por cero}
\|\widetilde \phi\|_{s,\Gamma}\le \frac C{1-2s}\|\phi\|_{s,\Gamma_i}
\qquad\forall \phi\in H^s(\Gamma_i),
\end{equation}
and then, we have
    \[
        \aligned
        \|f\|_{-s,\Gamma_i} &= \sup_{0\ne\phi\in H^s(\Gamma_i)} \frac{\int_{\Gamma_i}f\phi\,ds}{\|\phi\|_{s,\Gamma_i}}\\ &=\sup_{0\ne\phi\in H^s(\Gamma_i)} \frac{\int_{\Gamma_i}f\widetilde \phi\,ds}{\|\widetilde\phi\|_{s,\Gamma}} \frac{\|\widetilde\phi\|_{s,\Gamma}}{\|\phi\|_{s,\Gamma_i}}
        \endaligned
    \]
which yields
    \[
        \aligned
\|f\|_{-s,\Gamma_i} &\le \frac C{1-2s} \sup_{0\ne\phi\in H^s(\Gamma_i)}
\frac{\int_{\Gamma_i}f\widetilde \phi\,ds}{\|\widetilde\phi\|_{s,\Gamma}}\\
&\le\frac C{1-2s} \sup_{0\ne\phi\in H^s(\Gamma)}
\frac{\int_{\Gamma_i}f\phi\,ds}{\|\phi\|_{s,\Gamma}}
        \endaligned
    \]
    that is \eqref{eq: extension2}.
    \end{proof}

In the following proposition we estimate the error between the solutions
$(\bu,p)$ of~\eqref{eq: Stokes} and $(\bu(h),p(h))$ of~\eqref{eq: regu}
in the $L^2(\Omega)$-norm for the velocity and in $H^{-1}(\Omega)/\mathbb R$-norm for the pressure.

\begin{proposition}
\label{pr: apriori}
Let $\Omega$ be a convex Lipschitz polygonal or polyhedral domain and
$(\bu,p)$ and $(\bu(h),p(h))$ be the solutions of~\eqref{eq: Stokes}
and~\eqref{eq: regu}, respectively. Then, there exists a constants $C$,
independent of $h$, such that for $0\le s<\frac12$,

\begin{equation}
\label{eq: err_regu}
\|\bu-\bu(h)\|_{0,\Omega}+\|p-p(h)\|_{H^{-1}(\Omega)/\mathbb R}\le
\frac C{1-2s}\|\bg-\bg_h\|_{-s,\Gamma}.
\end{equation}
\end{proposition}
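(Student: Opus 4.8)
The plan is to use a duality argument. Since we want to estimate $\|\bu-\bu(h)\|_{0,\Omega}$ and $\|p-p(h)\|_{H^{-1}(\Omega)/\RE}$, and the difference $(\bw,r):=(\bu-\bu(h),p-p(h))$ solves the homogeneous Stokes system with boundary datum $\bg-\bg_h$ on $\Gamma$ (and zero right-hand side), the natural idea is to test against the solution of the adjoint problem with smooth data. Specifically, for $\f\in\bL^2(\Omega)$ let $(\bphi,q)$ be the solution of the homogeneous-Dirichlet Stokes problem of Proposition~\ref{pr: reg R3}, so that $\bphi\in\bH^2(\Omega)\cap\bH^1_0(\Omega)$, $q\in H^1(\Omega)/\RE$, with $\|\bphi\|_{2,\Omega}+\|q\|_{H^1(\Omega)/\RE}\le C\|\f\|_{0,\Omega}$. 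First I would establish the representation formula: multiply the equation $-\Delta\bphi+\nabla q=\f$ by $\bw$, integrate over $\Omega$, and integrate by parts twice, carefully tracking the boundary terms. Because $\bphi=0$ on $\Gamma$ but $\partial\bphi/\partial\bn$ and $q$ do not vanish, and because $\diver\bw=0$, $\bw=\bg-\bg_h$ on $\Gamma$, one obtains something like
\[
(\bw,\f)_\Omega = -\int_\Gamma (\bg-\bg_h)\cdot\Big(\frac{\partial\bphi}{\partial\bn}-q\bn\Big)\,d\gamma,
\]
after using that $(\bw,r)$ solves the homogeneous Stokes equations so the interior terms cancel. The analogous computation against a pressure test function handles the $H^{-1}$-norm of $r$.

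Next I would bound the right-hand side. Writing $\bh:=\partial\bphi/\partial\bn-q\bn$ restricted to $\Gamma$, the pairing is $\int_\Gamma(\bg-\bg_h)\cdot\bh\,d\gamma$. Using the definition of the negative-order norm, $|\int_\Gamma(\bg-\bg_h)\cdot\bh\,d\gamma|\le\|\bg-\bg_h\|_{-s,\Gamma}\,\|\bh\|_{s,\Gamma}$, where the norm on $\Gamma$ is interpreted piecewise over the edges/faces $\Gamma_i$ (this is where Lemma~\ref{rem: extension} enters: to pass between the global negative norm on $\Gamma$ and the piecewise one, picking up the factor $C/(1-2s)$). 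The remaining task is to show $\|\bh\|_{s,\Gamma_i}\le C\|\f\|_{0,\Omega}$ uniformly for $0\le s<\tfrac12$. Since $\bphi\in\bH^2(\Omega)$ and $q\in H^1(\Omega)$, on each smooth piece $\Gamma_i$ the normal derivative $\partial\bphi/\partial\bn$ lies in $\bH^{1/2}(\Gamma_i)$ and $q|_{\Gamma_i}\in H^{1/2}(\Gamma_i)$, hence $\bh\in\bH^{1/2}(\Gamma_i)\hookrightarrow\bH^s(\Gamma_i)$ for $s<\tfrac12$, with $\|\bh\|_{s,\Gamma_i}\le\|\bh\|_{1/2,\Gamma_i}\le C(\|\bphi\|_{2,\Omega}+\|q\|_{1,\Omega})\le C\|\f\|_{0,\Omega}$ by the trace theorem and Proposition~\ref{pr: reg R3}. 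Combining, $(\bw,\f)_\Omega\le \frac{C}{1-2s}\|\bg-\bg_h\|_{-s,\Gamma}\|\f\|_{0,\Omega}$, and taking the supremum over $\f\in\bL^2(\Omega)$ gives the $L^2$ bound on $\bw$; the pressure estimate follows from an analogous duality with $H^1\cap L^2_0$ test functions (or from the Stokes a priori estimate applied to $(\bw,r)$ once $\bw$ is controlled).

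The main obstacle I anticipate is the careful justification of the integration-by-parts / Green's formula when $\bw=\bu-\bu(h)$ is only in $\bL^2(\Omega)$ (since $\bu\notin\bH^1(\Omega)$ in general): the boundary integral $\int_\Gamma(\bg-\bg_h)\cdot\bh$ must be given a rigorous meaning and the formula derived by a density argument, approximating $\bg$ by smooth data $\bg^{(n)}$, using the a priori estimate \eqref{eq: L2apriori} of Proposition~\ref{pr: existence} to pass to the limit in the $\bL^2(\Omega)\times H^{-1}(\Omega)$ norms, and checking that the boundary pairing is continuous in the relevant topology. This is exactly the ``standard density argument'' alluded to in the proof of Proposition~\ref{pr: existence}. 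A secondary technical point is keeping the constant in the trace estimate $\|\bh\|_{s,\Gamma_i}\le C\|\bh\|_{1/2,\Gamma_i}$ independent of $s$ on the range $[0,\tfrac12)$, which is immediate since $\|\cdot\|_{s,\Gamma_i}\le\|\cdot\|_{1/2,\Gamma_i}$ there, so all the $s$-dependence is isolated in the factor $1/(1-2s)$ coming from Lemma~\ref{rem: extension}.
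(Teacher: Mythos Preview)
Your proposal is correct and follows essentially the same duality argument as the paper: the same adjoint Stokes problem from Proposition~\ref{pr: reg R3}, the same splitting of the boundary term over the $\Gamma_i$ with Lemma~\ref{rem: extension} supplying the factor $C/(1-2s)$, and the same density argument to justify the boundary representation when $\bu\notin\bH^1(\Omega)$ (the paper implements the density step concretely by taking $h_1\to 0$ in the family $\bg_{h_1}$ rather than introducing a new approximating sequence $\bg^{(n)}$). The only notable difference is in the pressure estimate: instead of a second duality, the paper simply observes that $\nabla(p-p(h))=\Delta(\bu-\bu(h))$ in $\bH^{-2}(\Omega)$, whence $\|p-p(h)\|_{H^{-1}(\Omega)/\RE}\le C\|\nabla(p-p(h))\|_{-2,\Omega}\le C\|\bu-\bu(h)\|_{0,\Omega}$, which is quicker than what you sketch.
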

\begin{proof}
First we will estimate the $L^2(\Omega)$-norm of $\bv:=\bu-\bu(h)$.
Since $\Omega$ is convex, we know from Proposition~\ref{pr: reg R3}, that
there exist $\bphi\in\bH^2(\Omega)\cap\bH^1_0(\Omega)$ and $q\in
H^1(\Omega)\cap L^2_0(\Omega)$ solutions of the following system,
\[
\aligned
-\Delta\bphi+\nabla q&=\bv &&\quad\text{in }\Omega\\
\diver\bphi&=0 &&\quad\text{in }\Omega\\
\bphi&=0 &&\quad\text{on }\Gamma.
\endaligned
\]
Take $h_1$ another value of the parameter.
Then, taking into account~\eqref{eq: weak}, we have
\begin{equation*}
\aligned
(\bu(h_1)&-\bu(h),\bv)_\Omega=
\left(\bu(h_1)-\bu(h),-\Delta\bphi+\nabla q\right)_\Omega\\
&=
\left(\nabla(\bu(h_1)-\bu(h)),\nabla\bphi\right)_\Omega
-\left(\bu(h_1)-\bu(h),\frac{\partial\bphi}{\partial\bn}\right)_\Gamma
\\
&\quad  -\left(\diver(\bu(h_1)-\bu(h)),q\right)_\Omega
+ \left((\bu(h_1)-\bu(h))\cdot\bn,q\right)_\Gamma
\\
&= -\left(\bg_{h_1}-\bg_{h},\frac{\partial\bphi}{\partial\bn}\right)_\Gamma +
\left((\bg_{h_1}-\bg_{h})\cdot\bn,q\right)_\Gamma.
\endaligned
\end{equation*}
Summarizing,
\begin{equation}\label{eq: 4}
    \left(\bu(h_1)-\bu(h),\bv\right)_\Omega =
    -\left(\bg_{h_1}-\bg_{h},\frac{\partial\bphi}{\partial\bn}\right)_\Gamma +
    \left((\bg_{h_1}-\bg_{h})\cdot\bn,q\right)_\Gamma.
\end{equation}
Since, from \eqref{eq: L2apriori} and \eqref{gh tiende a g} we know that, for
$h_1\to 0$,
\[
    \|\bu-\bu(h_1)\|_{0,\Omega}\le C\|\bg-\bg_{h_1}\|_{0,\Gamma}\to 0,
\]
taking $h_1\to 0$ in \eqref{eq: 4}, we obtain
\begin{equation}
\label{eq: eq_for_u-u(h)}
    \left(\bu-\bu(h),\bv\right)_\Omega =
     -\left(\bg-\bg_h,\frac{\partial\bphi}{\partial\bn}\right)_\Gamma +
    \left((\bg-\bg_h)\cdot\bn,q\right)_\Gamma.
\end{equation}
We estimate the right hand side in terms of $\|\bg-\bg_h\|_{H^{-s}(\Gamma)}$. 
For the second term we note that while $q\in H^\frac12(\Gamma)$, due to the
discontinuities of $\bn$ we can not assure that $q\bn\in H^\frac12(\Gamma)$.
Therefore, with $0\le s<\frac12$, we have
\[
\aligned
\left(\bg-\bg_h,q\bn\right)_\Gamma &=\sum_{i=1}^{N_e}
\left(\bg-\bg_h,q\bn\right)_{\Gamma_i}
\le \sum_{i=1}^{N_e}\|\bg-\bg_h\|_{-s,\Gamma_i}
\left\|q\bn\right\|_{s,\Gamma_i}\\
&\le C\left(\sum_{i=1}^{N_e}\|\bg-\bg_h\|^2_{-s,\Gamma_i}\right)^{\frac12}
\|q\|_{1,\Omega}
\le \frac C{1-2s} \|\bg-\bg_h\|_{-s,\Gamma} \|q\|_{1,\Omega}
\endaligned
\]
where, in the last inequality, we have used~\eqref{eq: extension2}. With a
similar argument, we obtain for the first term in the right-hand side of
\eqref{eq: eq_for_u-u(h)} the estimate
\[
    \left(\bg-\bg_h,\frac{\partial\bphi}{\partial\bn}\right)_\Gamma\le \frac C{1-2s} \|\bg-\bg_h\|_{-s,\Gamma} \|\bphi\|_{2,\Omega}.
\]
Hence, from \eqref{eq: eq_for_u-u(h)} and the a priori
estimate \eqref{eq: est_hom-Dir} we have
\[
\aligned
\left\|\bu-\bu(h)\right\|^2_{0,\Omega}&=(\bu-\bu(h),\bv)\\
&\le \frac C{1-2s}\|\bg-\bg_h\|_{-s,\Gamma}
\left(\|\bphi\|_{2,\Omega}+\|q\|_{1,\Omega}\right)\\
&\le \frac C{1-2s}\|\bg-\bg_h\|_{-s,\Gamma}\|\bu-\bu(h)\|_{0,\Omega}
\endaligned
\]
and so,
\begin{equation}
\label{eq: u-u(h)}
\|\bu-\bu(h)\|_{0,\Omega}\le\frac C{1-2s}\|\bg-\bg_h\|_{-s,\Gamma}.
\end{equation}
Now, for the error in the pressure we have
\[
    \aligned
    \|p-p(h)\|_{{H^{-1}(\Omega)/\mathbb R}} &\le C\|\nabla(p-p(h))\|_{-2,\Omega}
= C\|\Delta(\bu-\bu(h))\|_{-2,\Omega} \\
&\le C\|\bu-\bu(h)\|_{0,\Omega}
\le \frac C{1-2s}\|\bg-\bg_h\|_{-s,\Gamma}
    \endaligned
\]
which concludes the proof.
\end{proof}

\begin{remark}
The estimate of the previous proposition can be compared with
\cite[Lemma 2.12]{ANP} where the corresponding result for the
approximation of a Poisson equation with a non smooth Dirichlet
boundary datum is considered. A constant independent of $s$ is
obtained in \cite{ANP}, while our estimate contains a factor $C/(1-2s)$.
Indeed, we could bound the first term in the
right-hand side of \eqref{eq: eq_for_u-u(h)} exactly as in
\cite{ANP}. However, the slightly worse factor $C/(1-2s)$ arises
due to the presence of the second term which involves the pressure $q$.
\end{remark}

\subsection{Analysis of the finite element approximation error}
Let $\{\mathcal T_h\}$, $h>0$,  be a family of meshes of $\Omega$, which is assumed to be
shape-regular, with $h$ being the maximum diameter of the elements in $\T_h$.
Each mesh $\T_h$ induces a mesh $\T_{\Gamma,h}$ along the
boundary fitted with the edges/faces $\Gamma_i$, $i=1,\dots,N_e$.

We consider a family of pairs $\bV_h=\bW_h\cap\bH^1_0(\Omega)$ and
$Q_h\subset L^2_0(\Omega)$ of finite element spaces, with
$\bW_h\subset\bH^1(\Omega)$, which are uniformly stable for the Stokes problem,
that is, the following inf-sup condition is satisfied for some
$\beta>0$ independent of $h$ (see, e.g., \cite[Chap.8]{BBF})
\[
\sup_{\bv_h\in \bV_h}
\frac{\left(q_h,\diver\bv_h\right)_{0,\Omega}}{\|\bv_h\|_{1,\Omega}}\ge
\beta\|q_h\|_{0,\Omega}, \qquad\forall q_h\in Q_h, \quad\forall h>0.
\]
Moreover, we assume that
\begin{equation}
\label{eq: hypo}
[\mathcal P_1(\mathcal T_h)\cap H^1(\Omega)]^d\subseteq \bW_h,
\qquad \mathcal P_0(\mathcal T_h)\subseteq Q_h,
\end{equation}
where $\mathcal{P}_k(\mathcal{T}_h)$ stands for the vector space of piecewise
polynomials of degree not grater than $k$ on the mesh $\mathcal{T}_h$.
In the following we shall use interpolant operators onto the discrete spaces
$\bW_h$ and $Q_h$.
For functions $\bphi\in\bH^2(\Omega)$, we define
$\bphi^I\in\bW_h$ as the continuous piecewise linear Lagrange interpolation of $\bphi$.
The following error estimates are well known:

\begin{equation}
\label{eq: interp}
\|\bphi-\bphi^I\|_{m,T}\le Ch^{2-m}|\bphi|_{2,T},\quad m=0,1
\quad\text{for all }\bphi\in\bH^2(\Omega).
\end{equation}
Let $P_0$ be the $L^2$-projection of $\Ldo$ onto $\mathcal P^0(\mathcal T_h)$,
it is well known that
\[
\|q-P_0q\|_{0,T}\le C h|q|_{1,\Omega} \quad\text{for all } q\in H^1(\Omega).
\]

From now on, we assume that
$\bg_h$ is the trace of a function $E\bg_h\in\bW_h$, for example, it is
enough to assume that $\bg_h$ is continuous and piecewise linear.
Moreover, it is known that $E\bg_h$ can be chosen such that
$\|E\bg_h\|_{1,\Omega}\le C\|\bg_h\|_{\frac12,\Gamma}$.

We consider the finite
element approximation of~\eqref{eq: weak} that reads:
find $\bu_{h}\in \bW_h$ and $p_h\in Q_h$ such that $\bu_h=\bg_h$
on $\Gamma$ and
\begin{equation}
\aligned
\left(\nabla\bu_h,\nabla\bv_h\right)-\left(\diver\bv_h,p_h\right)&=0
\qquad \forall\,\,\bv_h\in\bV_h\\
\left(\diver\bu_h,q_h\right)&=0\qquad\forall\,\, q_h\in Q_h.
\endaligned
\label{eq: pd}
\end{equation}
By taking $\bv_h=\bu_h-E\bg_h$ and $q_h=p_h$ in~\eqref{eq: pd}, and using the inf-sup
condition, we obtain existence and uniqueness and the estimate
\begin{equation}
\label{eq: est_disc}
\|\bu_h\|_{1,\Omega} + \|p_h\|_{0,\Omega} \le C\|\bg_h\|_{\frac12,\Gamma}.
\end{equation}
In the following proposition we estimate the finite element error in norms
corresponding with the ones used in Proposition~\ref{pr: apriori}.
\begin{proposition}
\label{pr: err_h}
Let $(\bu(h),p(h))\in\bH^1(\Omega)\times\Ldo$ with
$\bu(h)=\bg_h$ on $\Gamma$ and
$(\bu_h,p_h)\in \bW_h\times Q_h$ with $\bu_h=E\bg_h+\bu_{0h}$
be the solutions of~\eqref{eq: weak}
and~\eqref{eq: pd}, respectively. Then we have 
\begin{equation}
\label{eq: err_h}
\|\bu(h)-\bu_h\|_{0,\Omega} + \|p(h)-p_h\|_{H^{-1}(\Omega)/\mathbb R} \le
Ch\|\bg_h\|_{\frac12,\Gamma}.
\end{equation}
\end{proposition}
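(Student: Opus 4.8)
The plan is to estimate both errors by duality (the Aubin--Nitsche argument), using only the a priori bounds \eqref{eq: apriori_reg} and \eqref{eq: est_disc} and \emph{not} any extra regularity of $\bu(h)$; since $\bg_h$ is merely in $\bH^{1/2}(\Gamma)$ the best expected order here is $O(h)$ rather than $O(h^2)$. Set $\bv:=\bu(h)-\bu_h$ and $r:=p(h)-p_h$. Because $\bu_h=E\bg_h+\bu_{0h}$ with $E\bg_h=\bg_h$ on $\Gamma$, we have $\bv\in\bH^1_0(\Omega)$, and subtracting \eqref{eq: pd} from \eqref{eq: weak} (legitimate since $\bV_h\subset\bH^1_0(\Omega)$) yields the orthogonality relations $(\nabla\bv,\nabla\bv_h)=(\diver\bv_h,r)$ for all $\bv_h\in\bV_h$ and $(\diver\bv,q_h)=0$ for all $q_h\in Q_h$.

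For the velocity, let $(\bphi,\xi)\in\big(\bH^2(\Omega)\cap\bH^1_0(\Omega)\big)\times\big(H^1(\Omega)\cap\Ldo\big)$ be the solution of the dual Stokes problem with right-hand side $\bv$; by Proposition~\ref{pr: reg R3} it satisfies $\|\bphi\|_{2,\Omega}+\|\xi\|_{1,\Omega}\le C\|\bv\|_{0,\Omega}$. Integrating by parts, and using that $\bv$ vanishes on $\Gamma$, gives $\|\bv\|_{0,\Omega}^2=(\nabla\bv,\nabla\bphi)-(\diver\bv,\xi)$. I would then insert the Lagrange interpolant $\bphi^I$ (which lies in $\bV_h$ since $\bphi=0$ on $\Gamma$) and the piecewise constant projection $P_0\xi\in Q_h$, and use the two orthogonality relations together with $\diver\bphi=0$ to rewrite the right-hand side as $(\nabla\bv,\nabla(\bphi-\bphi^I))+(\diver(\bphi^I-\bphi),r)-(\diver\bv,\xi-P_0\xi)$. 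Cauchy--Schwarz, the interpolation estimate \eqref{eq: interp}, the bound $\|\xi-P_0\xi\|_{0,\Omega}\le Ch|\xi|_{1,\Omega}$, and the dual regularity estimate then give $\|\bv\|_{0,\Omega}^2\le Ch\,\|\bv\|_{0,\Omega}\big(\|\bv\|_{1,\Omega}+\|r\|_{0,\Omega}\big)$. Since $\|\bv\|_{1,\Omega}+\|r\|_{0,\Omega}\le C\|\bg_h\|_{1/2,\Gamma}$ by \eqref{eq: apriori_reg} and \eqref{eq: est_disc}, the estimate $\|\bv\|_{0,\Omega}\le Ch\|\bg_h\|_{1/2,\Gamma}$ follows.

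For the pressure, I would argue as in the proof of Proposition~\ref{pr: apriori} through the inequality $\|r\|_{H^{-1}(\Omega)/\RE}\le C\,\|\nabla r\|_{-2,\Omega}$; unlike there, $\bu_h$ does not satisfy the strong equations, so $\|\nabla r\|_{-2,\Omega}$ must be estimated directly by duality. Testing against $\bpsi\in\bH^2_0(\Omega)$ with $\|\bpsi\|_{2,\Omega}\le1$, writing $(\nabla r,\bpsi)=-(r,\diver\bpsi)$, replacing $\diver\bpsi$ by $\diver\bpsi^I$ (with $\bpsi^I\in\bV_h$) up to the interpolation remainder, and using \eqref{eq: weak} and \eqref{eq: pd}, one reduces $(\nabla r,\bpsi)$ to $-(\nabla\bv,\nabla\bpsi)-(\nabla\bu_h,\nabla(\bpsi-\bpsi^I))+(\diver(\bpsi-\bpsi^I),p_h)$. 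The last two terms are $O(h)\|\bg_h\|_{1/2,\Gamma}$ by \eqref{eq: interp} and \eqref{eq: est_disc}. The first term is the delicate point: a direct bound by $\|\bv\|_{1,\Omega}\|\bpsi\|_{2,\Omega}$ is only $O(1)$, so instead one integrates by parts once, $(\nabla\bv,\nabla\bpsi)=-(\bv,\Delta\bpsi)$ (no boundary term, as $\bv=0$ on $\Gamma$), and invokes the $L^2$ bound on $\bv$ just obtained. Altogether $\|\nabla r\|_{-2,\Omega}\le Ch\|\bg_h\|_{1/2,\Gamma}$, which yields the pressure estimate. The only genuine subtleties beyond routine bookkeeping are this last integration by parts and the use of the lifting inequality $\|r\|_{H^{-1}(\Omega)/\RE}\le C\,\|\nabla r\|_{-2,\Omega}$ on the convex domain $\Omega$.
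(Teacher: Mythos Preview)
Your argument is correct and follows essentially the same Aubin--Nitsche strategy as the paper. The velocity part is identical: dual Stokes problem from Proposition~\ref{pr: reg R3}, Galerkin orthogonality, insertion of $\bphi^I$ and $P_0\xi$, then the a priori bounds \eqref{eq: apriori_reg} and \eqref{eq: est_disc}.

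For the pressure there is a small difference in packaging. The paper works directly with the sup-characterization \eqref{eq: tres} and, for each test function $q\in H^1_0(\Omega)$ with zero mean, invokes the $H^2_0$ right inverse of the divergence \eqref{aux2} to produce $\bpsi$ with $\diver\bpsi=q$; it then decomposes $(r,\diver\bpsi)$ using the \emph{error} quantities $\nabla\bv$ and $r$ in the interpolation remainder terms (see \eqref{eq: cinco}). You instead pass through the lifting inequality $\|r\|_{H^{-1}(\Omega)/\RE}\le C\|\nabla r\|_{-2,\Omega}$ and test $\nabla r$ against $\bpsi\in\bH^2_0(\Omega)$, arriving at a decomposition whose remainder terms carry $\nabla\bu_h$ and $p_h$ rather than $\nabla\bv$ and $r$. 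Since the lifting inequality is itself equivalent to the existence of the right inverse in \eqref{aux2}, and since both sets of remainder terms are controlled by $\|\bg_h\|_{1/2,\Gamma}$ via \eqref{eq: apriori_reg}--\eqref{eq: est_disc}, the two routes are equivalent in substance; the paper's version is slightly more self-contained in that it cites \eqref{aux2} explicitly rather than the derived inequality. The key integration by parts $(\nabla\bv,\nabla\bpsi)=-(\bv,\Delta\bpsi)$, which you correctly flag as the delicate point, is exactly what the paper does as well.
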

\begin{proof} Subtracting~\eqref{eq: pd} from~\eqref{eq: weak}, we get the
following error equations:
\begin{equation}
\label{cuatro}
\aligned
\left(\nabla(\bu(h)-\bu_h),\nabla\bv_h\right)
-\left(\diver\bv_h,p(h)-p_h\right)&=0&&\qquad\forall\bv_h\in V_h\\
\left(\diver(\bu(h)-\bu_h),q_h\right)&=0&&\qquad\forall q_h\in Q_h.
\endaligned
\end{equation}
In order to use a duality argument, we introduce the following system:
find $(\bphi,q)$ satisfying
\begin{equation}\label{aux3}
\aligned
-\Delta\bphi+\nabla q&=\bu(h)-\bu_h&&\qquad\mbox{in }\Omega\\
\diver\bphi&=0&&\qquad\mbox{in }\Omega\\
\bphi&=0&&\qquad\mbox{on }\Gamma.
\endaligned
\end{equation}
From Proposition~\ref{pr: reg R3},
$\bphi\in\bH^2(\Omega)\cap\bH^1_0(\Omega)$ and
$q\in H^1(\Omega)\cap L^2_0(\Omega)$ with the a priori
estimate~\eqref{eq: est_hom-Dir}. We have
\[
\|\bu(h)-\bu_h\|_{0,\Omega}^2 = \left(\bu(h)-\bu_h,-\Delta\bphi+\nabla q\right)
\]
then integration by parts, the error equations~\eqref{cuatro},
the approximation properties~\eqref{eq: interp}
and~\eqref{eq: est_disc}, the fact that
$\bu(h)=\bu_h=\bg_h$ on the boundary, and the a priori
estimates~\eqref{eq: apriori_reg} and~\eqref{eq: est_disc}, give
\[
\aligned
\|\bu(h)&-\bu_h\|_{0,\Omega}^2 \\
&=\left(\nabla(\bu(h)-\bu_h),\nabla\bphi\right)
-\left(\diver(\bu(h)-\bu_h),q\right)
-\left(\diver\bphi,p(h)-p_h\right)\\
&=\left(\nabla(\bu(h)-\bu_h),\nabla(\bphi-\bphi^I)\right)\\
&\qquad- \left(\diver(\bphi-\bphi^I),p(h)-p_h\right)
-\left(\diver(\bu(h)-\bu_h),q-P_0q\right)\\
&\le Ch\left(|\bphi|_{2,\Omega}+
|q|_{1,\Omega}\right)\|\nabla(\bu(h)-\bu_h)\|_{0,\Omega}
+Ch|\bphi|_{2,\Omega}\|p(h)-p_h\|_{0,\Omega}\\
&\le Ch\|\bu(h)-\bu_h\|_{0,\Omega}\left(\|\nabla\bu(h)\|_{0,\Omega}+
\|\nabla\bu_h\|_{0,\Omega}+\|p(h)\|_{0,\Omega}+\|p_h\|_{0,\Omega}\right)\\
&\le Ch\|\bu(h)-\bu_h\|_{0,\Omega}\|\bg_h\|_{\frac12,\Gamma}
\endaligned
\]
which provides the desired estimate for the velocity field
\begin{equation}
\label{eq: u(h)-u_h}
\|\bu(h)-\bu_h\|_{0,\Omega}\le Ch\|\bg_h\|_{\frac12,\Gamma}.
\end{equation}

Let us now estimate $p(h)-p_h$.  Since $p(h)-p_h\in L^2_0(\Omega)$, we have
\begin{equation}
\label{eq: tres}
\|p(h)-p_h\|_{H^{-1}(\Omega)/\mathbb R} =
\sup_{\substack{q\in H^1_0(\Omega)\\\int_\Omega q=0}}
\frac{\left(p(h)-p_h,q\right)}{\|q\|_{1,\Omega}}.
\end{equation}
Given $q\in H^1_0(\Omega)$ with $\int_\Omega q=0$, we know that there exists
$\bpsi\in\bH^2_0(\Omega)$ such that~\cite[Theorem 1]{S}
\begin{equation}
\label{aux2}
\diver\bpsi=q \quad\mbox{in }\Omega, \qquad
\|\bpsi\|_{2,\Omega}\le C\|q\|_{1,\Omega}.
\end{equation}
Then using the interpolant $\bpsi^I$ as in~\eqref{eq: interp}, and the error
equation~\eqref{cuatro}, we have
\[
\aligned
(p(h)-p_h,q) &=\left(p(h)-p_h,\diver\bpsi\right)\\
&=\left(p(h)-p_h,\diver(\bpsi-\bpsi^I)\right)
+\left(\nabla(\bu(h)-\bu_h),\nabla\psi^I\right)\\
&=\left(p(h)-p_h,\diver(\bpsi-\bpsi^I)\right)
-\left(\nabla(\bu(h)-\bu_h),\nabla(\bpsi-\bpsi^I)\right)\\
&\qquad+\left(\nabla(\bu(h)-\bu_h),\nabla\bpsi\right).
\endaligned
\]
Integrating by parts the last term, we have
\begin{equation}
\aligned
\left(p(h)-p_h,q\right) &=\left(p(h)-p_h,\diver(\bpsi-\bpsi^I)\right)
-\left(\nabla(\bu(h)-\bu_h),\nabla(\bpsi-\bpsi^I)\right) \\
&\qquad -\left(\bu(h)-\bu_h,\Delta\bpsi\right).
\label{eq: cinco}
\endaligned
\end{equation}
Then we obtain
\[
\aligned
\left(p(h)-p_h,q\right) &\le Ch|\bpsi|_{2,\Omega}\left(\|p(h)\|_{0,\Omega}
+ \|p_h\|_{0,\Omega}\right) \\
&\qquad +Ch\left(\|\nabla\bu(h)\|_{0,\Omega} +\|\nabla \bu_h\|_{0,\Omega}\right)
|\bpsi|_{2,\Omega}\\
&\qquad + \|\bu(h)-\bu_h\|_{0,\Omega}|\bpsi|_{2,\Omega}\\
&\le C\big[h\left(\|p(h)\|_{0,\Omega}  + \|p_h\|_{0,\Omega}
+\|\nabla\bu(h)\|_{0,\Omega} + \|\nabla \bu_h\|_{0,\Omega}\right) \\
&\qquad + \|\bu(h)-\bu_h\|_{0,\Omega} \big]\|q\|_{1,\Omega}.
\endaligned
\]
Substituting this inequality in~\eqref{eq: tres} implies
\[
\aligned
\|p(h)-p_h\|_{H^{-1}(\Omega)/\mathbb R}&\le Ch\left(\|p(h)\|_{0,\Omega}
+ \|p_h\|_{0,\Omega}+\|\nabla\bu(h)\|_{0,\Omega}
+\|\nabla \bu_h\|_{0,\Omega}\right)\\
&+\|\bu(h)-\bu_h\|_{0,\Omega}.
\endaligned
\]
Then the stability estimates \eqref{eq: apriori_reg} and \eqref{eq: est_disc}
joint with~\eqref{eq: u(h)-u_h}, give
\begin{equation}
\label{eq: p(h)-p_h}
\|p(h)-p_h\|_{H^{-1}(\Omega)/\mathbb R}\le Ch\|\bg_h\|_{\frac12,\Gamma}
\end{equation}
that together with \eqref{eq: u(h)-u_h} concludes the proof.
\end{proof}

The regularization of the boundary datum $\bg$ could be obtained by finite
element discretization.
By construction of the mesh $\T_h$, the boundary $\Gamma$ is subdivided into
boundary elements fitted with the edges/faces $\Gamma_i$, $i=1,\dots,\Ne$ and
$\T_{\Gamma,h}$ denotes the mesh along the boundary. Let $\hg$ be the maximum
diameter of the elements in $\T_{\Gamma,h}$ and define the discrete space on
the boundary as
\begin{equation}
\label{eq: Gh}
\Gh=\{\bz_h\in \mathbf{C}^0(\Gamma): \bz_h\in \mathcal{P}^1(E) \
\forall E\in\T_{\Gamma,h}\}.
\end{equation}
Then the function $\bg_h$ can be obtained either as the
$\bL^2(\Gamma)$-projection of $\bg$ onto the space $\Gh$, or using
the Carstensen interpolant $\bC_h\bg$ of $\bg$, see~\cite{C}, or by a suitable Lagrange interpolation, see Section \ref{se: pcwisesmooth}.
It is straightforward to check that both the $L^2$-projection and the
Carstensen interpolant provide approximations $\bg_h$ of $\bg$ which satisfy
the compatibility condition~\eqref{eq: compatibility}, while this is not always the case for the standard Lagrange interpolation.
Moreover we can show the following regularization error estimates for $\bg_h$
(see \cite[Lemmata 2.13 and A.2]{ANP}):

\begin{proposition}
\label{pr: Apel}
Let $\bg_h\in\Gh$ be either the piecewise linear Carstensen interpolant of
$\bg$ or the $L^2(\Gamma)$-projection on the continuous piecewise linear functions, then
we have
\begin{equation}
\label{eq: Apel_s}
\|\bg-\bg_h\|_{-s,\Gamma}\le Ch^{s+t}\|\bg\|_{t,\Gamma},\qquad\forall \bg\in H^t(\Gamma), s,t\in [0,1].
\end{equation}
We also have
\begin{equation}
\label{eq: Apel?}
\|\bg_h\|_{t,\Gamma}\le C\|\bg\|_{t,\Gamma}\qquad \forall \bg\in H^t(\Gamma),
t\in [0,1],
\end{equation}
where, for $t>0$, it is assumed that the mesh $\T_{\Gamma,h}$ is quasi-uniform.
\end{proposition}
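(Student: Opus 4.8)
The plan is to deduce both estimates from their endpoint cases together with interpolation between Sobolev spaces, essentially as in \cite[Lemmata 2.13 and A.2]{ANP}. Denote by $Q_h$ either the $\bL^2(\Gamma)$-projection $P_h$ onto $\Gh$ or the Carstensen interpolant $\bC_h$. I would rely on two facts: that $Q_h$ is a projection onto $\Gh$ (trivially for $P_h$, and by construction for $\bC_h$, see \cite{C}), and that it is locally $\bL^2(\Gamma)$-stable, i.e. $\|Q_h\bg\|_{0,E}\le C\|\bg\|_{0,\omega_E}$ for every boundary element $E\in\T_{\Gamma,h}$ with associated patch $\omega_E$ (globally, with constant $1$, in the case of $P_h$). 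Since $Q_h$ reproduces piecewise linears, a Bramble--Hilbert argument on each patch gives the local first order bound $\|\bg-Q_h\bg\|_{0,E}\le C\,h_E\,|\bg|_{1,\omega_E}$, and summing over the finitely overlapping family $\{\omega_E\}_{E\in\T_{\Gamma,h}}$ (possible by shape regularity) yields $\|\bg-Q_h\bg\|_{0,\Gamma}\le C\,\hg\,|\bg|_{1,\Gamma}$.

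For \eqref{eq: Apel?} the case $t=0$ is exactly the $\bL^2(\Gamma)$-stability above. For $t=1$, with $\T_{\Gamma,h}$ quasi-uniform: for $\bC_h$ this is the standard $\bH^1(\Gamma)$-stability of Cl\'ement-type operators on shape-regular meshes, while for $P_h$ I would write $\|P_h\bg\|_{1,\Gamma}\le\|P_h\bg-\bC_h\bg\|_{1,\Gamma}+\|\bC_h\bg\|_{1,\Gamma}$ and estimate the first term by the inverse inequality on $\Gh$,
\[
\|P_h\bg-\bC_h\bg\|_{1,\Gamma}\le C\hg^{-1}\|P_h\bg-\bC_h\bg\|_{0,\Gamma}\le C\hg^{-1}\big(\|\bg-P_h\bg\|_{0,\Gamma}+\|\bg-\bC_h\bg\|_{0,\Gamma}\big)\le C\|\bg\|_{1,\Gamma},
\]
using the first order bound of the previous paragraph; here quasi-uniformity enters. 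Interpolating $Q_h$, which is now bounded on $\bL^2(\Gamma)$ and on $\bH^1(\Gamma)$, and using that the real interpolation space of index $t$ between these is $\bH^t(\Gamma)$, gives \eqref{eq: Apel?} for all $t\in[0,1]$.

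For \eqref{eq: Apel_s} I would first treat $s=0$: interpolating the operator $I-Q_h$ between $\bH^1(\Gamma)\to\bL^2(\Gamma)$ (norm $\le C\hg$) and $\bL^2(\Gamma)\to\bL^2(\Gamma)$ (norm $\le C$) yields $\|\bg-\bg_h\|_{0,\Gamma}\le C\hg^{\,t}\|\bg\|_{t,\Gamma}$ for every $t\in[0,1]$. For $s\in(0,1]$ I would use duality, $\|\bg-\bg_h\|_{-s,\Gamma}=\sup_{0\ne\bphi\in\bH^s(\Gamma)}(\bg-\bg_h,\bphi)_\Gamma/\|\bphi\|_{s,\Gamma}$. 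When $Q_h=P_h$ the residual is $\bL^2(\Gamma)$-orthogonal to $\Gh\ni P_h\bphi$, so
\[
(\bg-P_h\bg,\bphi)_\Gamma=(\bg-P_h\bg,\bphi-P_h\bphi)_\Gamma\le\|\bg-P_h\bg\|_{0,\Gamma}\,\|\bphi-P_h\bphi\|_{0,\Gamma}\le C\hg^{\,t}\|\bg\|_{t,\Gamma}\,C\hg^{\,s}\|\bphi\|_{s,\Gamma},
\]
where the $s=0$ estimate was used once with exponent $t$ applied to $\bg$ and once with exponent $s$ applied to $\bphi$; dividing by $\|\bphi\|_{s,\Gamma}$ gives \eqref{eq: Apel_s}.

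The step I expect to be the main obstacle is the negative-order estimate for the Carstensen interpolant, since the residual $\bg-\bC_h\bg$ is no longer $\bL^2(\Gamma)$-orthogonal to $\Gh$. I would recover it by exploiting that $\bC_h$ reproduces piecewise linears and is locally $\bL^2$-stable: decomposing $(\bg-\bC_h\bg,\bphi)_\Gamma$ over the boundary elements and subtracting on each patch a piecewise-linear interpolant of $\bphi$ before applying Cauchy--Schwarz, a patchwise Bramble--Hilbert estimate supplies the extra factor $\hg^{\,s}$ on the $\bphi$-side and hence the bound $C\hg^{\,s+t}\|\bg\|_{t,\Gamma}\|\bphi\|_{s,\Gamma}$; this is precisely \cite[Lemma A.2]{ANP}, to which one may also simply appeal, together with \cite{C} for the local fractional approximation and stability properties of $\bC_h$ used along the way. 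Apart from this point the proof is routine scaling, summation over finitely overlapping patches, and real interpolation of operators.
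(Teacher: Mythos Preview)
Your proposal is correct and follows essentially the same approach as the paper. In fact you give considerably more detail: the paper simply cites \cite[Lemma A.2 and Remark A.3]{ANP} for \eqref{eq: Apel_s} and for the $t=0$ case of \eqref{eq: Apel?}, and for $t>0$ in \eqref{eq: Apel?} it does exactly what you do---compare $\bg_h$ to a Cl\'ement-type interpolant, apply the inverse inequality on the difference (this is where quasi-uniformity enters), bound each piece by the first-order $\bL^2$ approximation estimate, and then interpolate between $t=0$ and $t=1$. The only cosmetic difference is that the paper uses a generic Cl\'ement operator $\Pi_h$ as the comparison object for both $P_h$ and $\bC_h$, whereas you take $\bC_h$ itself as the $\bH^1$-stable reference and compare $P_h$ to it; the underlying mechanism is identical.
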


\begin{proof}
    Inequality \eqref{eq: Apel_s} is proved in \cite[Lemma A.2]{ANP} for
$\bg_h$ being the Carstensen interpolant, and in \cite[Remark A.3]{ANP}
when $\bg_h$ is the $L^2$-projection on piecewise linear functions on $\Gamma$.

    Inequality \eqref{eq: Apel?} for $t=0$ is also proved in \cite{ANP}. For $t>0$ we can proceed as follows.

    Let $\Pi_h:\bH^1(\Gamma)\to\Gh$ be the Cl\'ement's operator such that for all $\bg\in\bH^1(\Gamma)$
    \[
        \|\bg-\Pi_h\bg\|_{r,\Gamma}\le
        c\left(\sum_K h_K^{2-2r}\|\bg\|_{1,K}^2\right)^{\frac12}
        \quad \text{for }r=0,1.
    \]
    Then, if the mesh is quasi-uniform we can use an inverse inequality and obtain
    \[
        \aligned
        \|\nabla\bg_h\|_{0,\Gamma}&\le \|\nabla\Pi_h\bg\|_{0,\Gamma}+
        \|\nabla(\bg_h-\Pi_h\bg)\|_{0,\Gamma}\\
        &\le c\|\bg\|_{1,\Gamma}+\frac{C_I}h\|\bg_h-\Pi_h\bg\|_{0,\Gamma}\\
        &\le
        c\|\bg\|_{1,\Gamma}+\frac{C_I}h\|\bg_h-\bg\|_{0,\Gamma}
        +\frac{C_I}h\|\bg-\Pi_h\bg\|_{0,\Gamma}\le C\|\bg\|_{1,\Gamma}.
        \endaligned
    \]
    Then by interpolation of Sobolev spaces (see, e.g., \cite[Prop. 14.1.5]{BS})
    we get~\eqref{eq: Apel?}.
\end{proof}

The bounds~\eqref{eq: err_regu} and~\eqref{eq: err_h} together with the
inequalities in Proposition~\ref{pr: Apel} give the following result.

\begin{theorem}
Let $\Omega$ be a convex polygonal or polyhedral domain. If the family of meshes
$\T_{\Gamma,h}$ is quasi-uniform and $\bg_h$ is given as in
Proposition~\ref{pr: Apel} then, for $0\le t<\frac12$ and $\bg\in H^t(\Gamma)$,
we have

\begin{equation}
\label{eq: estimate_t}
\|\bu-\bu_h\|_{0,\Omega}+\|p-p_h\|_{H^{-1}(\Omega)/\mathbb R}
\le C\,|\log h|h^{\frac12+t}\|\bg\|_{t,\Gamma}.
\end{equation}
\end{theorem}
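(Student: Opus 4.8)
The plan is to split the total error by the triangle inequality,
\[
\|\bu-\bu_h\|_{0,\Omega}+\|p-p_h\|_{H^{-1}(\Omega)/\mathbb R}
\le \|\bu-\bu(h)\|_{0,\Omega}+\|p-p(h)\|_{H^{-1}(\Omega)/\mathbb R}
+\|\bu(h)-\bu_h\|_{0,\Omega}+\|p(h)-p_h\|_{H^{-1}(\Omega)/\mathbb R},
\]
and to bound the first group by Proposition~\ref{pr: apriori} (regularization error) and the second by Proposition~\ref{pr: err_h} (discretization error). Both propositions apply here: since $\bg_h$ is the Carstensen interpolant or the $L^2(\Gamma)$-projection onto $\Gh$, it is continuous and piecewise linear, hence the trace of some $E\bg_h\in\bW_h$, and it satisfies the compatibility condition~\eqref{eq: compatibility}; moreover $\Omega$ is convex by hypothesis.

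For the discretization error, the bound~\eqref{eq: err_h} involves $\|\bg_h\|_{\frac12,\Gamma}$, which I would control by a standard inverse inequality on the quasi-uniform boundary mesh together with the stability estimate~\eqref{eq: Apel?}: for $0\le t<\frac12$,
\[
\|\bg_h\|_{\frac12,\Gamma}\le Ch^{t-\frac12}\|\bg_h\|_{t,\Gamma}\le Ch^{t-\frac12}\|\bg\|_{t,\Gamma},
\]
so that $\|\bu(h)-\bu_h\|_{0,\Omega}+\|p(h)-p_h\|_{H^{-1}(\Omega)/\mathbb R}\le Ch^{\frac12+t}\|\bg\|_{t,\Gamma}$, with no logarithmic factor. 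For the regularization error, Proposition~\ref{pr: apriori} combined with~\eqref{eq: Apel_s} gives, for every $s\in[0,\frac12)$,
\[
\|\bu-\bu(h)\|_{0,\Omega}+\|p-p(h)\|_{H^{-1}(\Omega)/\mathbb R}
\le \frac{C}{1-2s}\,\|\bg-\bg_h\|_{-s,\Gamma}
\le \frac{C}{1-2s}\,h^{s+t}\|\bg\|_{t,\Gamma}.
\]

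The only delicate point is then to choose $s=s(h)$ so as to balance the blow-up of $1/(1-2s)$ against the decay of $h^{s+t}$. Writing $s=\frac12-\varepsilon$ and using $h^{s+t}=h^{\frac12+t}e^{\varepsilon|\log h|}$ for $h<1$, one has $\frac{h^{s+t}}{1-2s}=h^{\frac12+t}\,\frac{e^{\varepsilon|\log h|}}{2\varepsilon}$; minimizing the last factor over $\varepsilon>0$ yields $\varepsilon=1/|\log h|$, with value $\frac{e}{2}|\log h|$. This choice is admissible whenever $h$ is small enough that $|\log h|\ge2$, while for $h$ bounded away from $0$ the asserted estimate is trivial. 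Adding the two contributions gives~\eqref{eq: estimate_t}. I expect this logarithmic optimization to be the only real work; everything else is a routine assembly of Propositions~\ref{pr: apriori},~\ref{pr: err_h} and~\ref{pr: Apel}, and the appearance of $|\log h|$ is exactly what the $C/(1-2s)$ factor flagged in the remark after Proposition~\ref{pr: apriori} forces.
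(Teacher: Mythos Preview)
Your proposal is correct and follows essentially the same route as the paper: the triangle-inequality split into regularization and discretization errors, the use of Propositions~\ref{pr: apriori} and~\ref{pr: err_h} together with~\eqref{eq: Apel_s},~\eqref{eq: Apel?} and the inverse inequality, and the choice $s=\tfrac12-1/|\log h|$ (equivalently the paper's $s=\tfrac12+1/\log h$) to absorb the $1/(1-2s)$ factor into a $|\log h|$. Your explicit minimization of $e^{\varepsilon|\log h|}/(2\varepsilon)$ is just a more detailed presentation of the same optimization.
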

\begin{proof}

From Proposition \ref{pr: apriori} and~\eqref{eq: Apel_s}, we have for
$0\le s<\frac12$
\[
\|\bu-\bu(h)\|_{0,\Omega}+\|p-p(h)\|_{H^{-1}(\Omega)/\mathbb R}\le
\frac C{1-2s}\,h^{s+t}\|\bg\|_{t,\Gamma}.
\]
Taking $s=1/2+1/\log h<1/2$ we obtain
\begin{equation}
\label{eq: Apel2}
\|\bu-\bu(h)\|_{0,\Omega}+\|p-p(h)\|_{H^{-1}(\Omega)/\mathbb R}\le
C\,h^{\frac12+t}\,|\log h| \|\bg\|_{t,\Gamma}.
\end{equation}
On the other hand, from Proposition \ref{pr: err_h},
\begin{equation}
\label{eq: Apel3}
\|\bu(h)-\bu_h\|_{0,\Omega}+\|p(h)-p_h\|_{H^{-1}(\Omega)/\mathbb R}\le
C\,h \|\bg_h\|_{\frac12,\Gamma}.
\end{equation}
Now, using an inverse inequality (\cite[Theorem 4.1]{DFGHS})
and~\eqref{eq: Apel?}, we obtain
\[
\|\bg_h\|_{\frac12,\Gamma}\le C h^{t-\frac12}\|\bg_h\|_{t,\Gamma}
\le C h^{t-\frac12}\|\bg\|_{t,\Gamma},
\]
which substituted in~\eqref{eq: Apel3} gives,
\begin{equation}
\label{eq: Apel5}
\|\bu(h)-\bu_h\|_{0,\Omega} + \|p(h)-p_h\|_{H^{-1}(\Omega)} \le
C\,h^{\frac12+t}\|\bg\|_{t,\Gamma}.
\end{equation}
Combining~\eqref{eq: Apel2} and~\eqref{eq: Apel5} we arrive at the
desired estimate~\eqref{eq: estimate_t}.
\end{proof}

\section{A priori error estimates for piecewise smooth boundary data}
\label{se: pcwisesmooth}
\setcounter{equation}{0}
In this section we analyze the approximation of piecewise smooth data, in
particular, our results can be applied to the lid-driven cavity problem. In
practice, the most usual way to deal with the non-homogeneous Dirichlet
boundary condition is to use the Lagrange interpolation or a simple
modification of it, to treat discontinuities and to obtain a compatible
approximation $\bg_h$. 

We shall use the following notation for the norm of $\bg$
\begin{equation}
\label{eq: normg}
\normg{k} =\left(\sum_{i=1}^{\Ne}\|\bg\|^2_{k,\Gamma_i}\right)^{\frac12}.
\end{equation}
In the following, we consider separately the case $d=2$ or $d=3$.

\subsection{Two dimensional case}
Let $\bg=(g_1,g_2):\Gamma\to\mathbb R^2$ be such that
$\bg|_{\Gamma_i}\in\bH^1(\Gamma_i)$ for $i=1,\ldots,\Ne$, where
$\Gamma_i$ are the boundary segments $\Gamma_i=[A_i,A_{i+1}]$
(with $A_{\Ne+1}=A_1$) and $A_i$, $i=1,\dots,\Ne$ are the boundary
vertices. 
We observe that $\bg\in\bH^s(\Gamma)$ with $0\le s<\frac12$. Indeed, let us set
$\bg_i=\bg|_{\Gamma_i}$. Since, for $0\le s<\frac12$,
$H^1(\Gamma_i)\subset H^s(\Gamma_i)$, we have that the extension by zero
$\tilde\bg_i$ of $\bg_i\in\bH^s(\Gamma_i)$ belongs to $\bH^s(\Gamma)$
(see~\cite[Th.11.4 in Chapt.1]{LM}) and, thanks to~\eqref{extension por cero}, 
\[
\|\tilde\bg_i\|_{s,\Gamma}\le \frac{C}{1-2s}\|\bg_i\|_{s,\Gamma_i}.
\]
Then $\bg=\sum_{i=1}^{\Ne} \tilde\bg_i$ belongs to $\bH^s(\Gamma)$, with
\begin{equation}
\label{eq: normsGamma}
\|\bg\|_{s,\Gamma}\le \frac{C}{1-2s}\normg{1}.
\end{equation}

We denote by $B_i$, $1\le i\le M$, the boundary nodes of
the mesh numbered consecutively and set $B_{M+1}=B_1$ (of
course these nodes depend on $h$ but we omit this in the notation
for simplicity) and $h_i=|B_{i+1}-B_i|$.
In principle, we would define $\bg_h$ as the continuous piecewise linear
vector field on $\Gamma$
such that $\bg_h(B_j)=\bg(B_j)$ if $\bg$ is continuous in $B_j$ and
$\bg_h(B_j)=\bg(B_j^-)$ or $\bg_h(B_j)=\bg(B_j^+)$, or some average of
these two values, if not. Notice that
$|\bg_h(B_j)|\le\|\bg\|_{L^\infty(\Gamma)}$.
However, in general, this definition does not
satisfy the compatibility condition \eqref{eq: compatibility}. We now
show how to enforce compatibility by a simple modification.
\begin{lemma}
\label{le: gh2D}
Given $\bg\in\bL^2(\Gamma)$ such that $\bg|_{\Gamma_i}\in\bH^1(\Gamma_i)$ for 
$i=1,\dots,N_e$,
there exists a piecewise linear function $\bg_h$ which is a modified Lagrange
interpolant of $\bg$ satisfying the compatibility
condition~\eqref{eq: compatibility}. Moreover, 
\begin{equation}
\label{eq: norminfgh2d}
\|\bg_h\|_{L^\infty(\Gamma)}\le C\normg{1}.
\end{equation}

\end{lemma}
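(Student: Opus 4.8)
The plan is to start from the naive continuous piecewise linear interpolant $\widehat\bg_h$ described just before the lemma statement (taking one-sided values, or an average, at the finitely many vertices $A_i$ where $\bg$ is discontinuous), and then to correct it by adding a small, explicitly constructed piecewise linear field supported near a single mesh edge so as to kill the boundary flux $\int_\Gamma\widehat\bg_h\cdot\bn$. First I would record that, because $\bg|_{\Gamma_i}\in\bH^1(\Gamma_i)$, each $\bg_i$ is continuous on the closed segment $\Gamma_i$ (in $d=2$, $H^1$ embeds into $C^0$), so the one-sided limits $\bg(B_j^\pm)$ make sense and $|\widehat\bg_h(B_j)|\le\|\bg\|_{L^\infty(\Gamma)}\le C\,\normg{1}$ again by the one-dimensional Sobolev embedding $\|\cdot\|_{L^\infty(\Gamma_i)}\le C\|\cdot\|_{1,\Gamma_i}$. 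Hence $\widehat\bg_h$ already satisfies the $L^\infty$ bound \eqref{eq: norminfgh2d} with a constant independent of $h$; the only thing missing is the compatibility condition \eqref{eq: compatibility}.

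Next I would estimate the defect $\delta_h:=\int_\Gamma\widehat\bg_h\cdot\bn$. Since $\int_\Gamma\bg\cdot\bn=0$ by \eqref{eq: uno}, we have $\delta_h=\int_\Gamma(\widehat\bg_h-\bg)\cdot\bn$, and on each boundary segment $\Gamma_i$ the normal $\bn$ is constant, so $\delta_h=\sum_i \bn_i\cdot\int_{\Gamma_i}(\widehat\bg_h-\bg_i)\,ds$. The one-dimensional linear interpolation error on each boundary edge $E=[B_j,B_{j+1}]$ contained in the interior of some $\Gamma_i$ contributes $O(h_j^2)\|\bg_i\|_{1,E}$ to this integral in the usual way, while the (at most $N_e$) edges abutting a discontinuity vertex contribute a term of size $O(h_j)\|\bg\|_{L^\infty}$ since there $\widehat\bg_h$ need not interpolate $\bg$ at one endpoint. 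Summing and using Cauchy–Schwarz in the index $j$ gives the crude but sufficient bound $|\delta_h|\le C\,h\,\normg{1}$.

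Then comes the correction step, which is the only genuinely delicate point. Pick one fixed boundary edge $E_0=[B_0,B_{0+1}]$ of length $h_0$ lying in the interior of one of the $\Gamma_i$, with unit normal $\bn_0$, and let $\lambda$ be the continuous piecewise linear hat function on $\T_{\Gamma,h}$ equal to $1$ at the midpoint-free vertex $B_0$ and supported on the two edges meeting $B_0$; more simply, use the standard nodal basis and adjust the value of $\widehat\bg_h$ at a single interior node $B_0$ of $\Gamma_i$ by a vector $c\,\bn_0$. Since $\int_\Gamma \lambda\,ds$ is of order $h_0\sim h$ (here I would use quasi-uniformity only to bound $h_0$ from below by $c\,h$, but in fact any fixed edge works), choosing $c=-\delta_h/\int_\Gamma\lambda\,ds$ makes $\bg_h:=\widehat\bg_h+c\,\bn_0\lambda$ satisfy \eqref{eq: compatibility} exactly, and $|c|\le C|\delta_h|/h\le C\,\normg{1}$, whence the perturbation is bounded in $L^\infty(\Gamma)$ by $C\,\normg{1}$ and \eqref{eq: norminfgh2d} survives. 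The field $\bg_h$ is manifestly continuous and piecewise linear, and it agrees with the Lagrange interpolant of $\bg$ away from the finitely many discontinuity vertices and the single modified node $B_0$, so it is legitimately a modified Lagrange interpolant. I expect the bookkeeping of the defect estimate near the discontinuities — making sure the $O(h)$ per-edge contributions are controlled uniformly in $h$ and do not accumulate — to be the main (though still routine) obstacle; everything else is a one-line construction.
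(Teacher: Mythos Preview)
Your proposal is correct and follows essentially the same route as the paper: define the naive piecewise linear interpolant, estimate the flux defect $\delta_h=\int_\Gamma(\widehat\bg_h-\bg)\cdot\bn$ as $O(h)\normg{1}$, and then modify the nodal value at one interior node $B_k$ (with local mesh size comparable to $h$) in the normal direction to kill the defect, yielding a correction bounded in $L^\infty$ by $C\normg{1}$. The only cosmetic differences are that the paper sets the tangential component of $\bg_h(B_k)$ to zero (you keep the interpolated tangential value and perturb only normally), and that your parenthetical ``any fixed edge works'' should be read as ``any edge with $h_0\sim h$'', which always exists without quasi-uniformity since $h$ is the maximum edge length.
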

\begin{proof}
We modify the definition of $\bg_h$ given above in some node~$B_k$.
For simplicity, let us choose this node different from all the vertices and
their neighbors, and such that $h_k$ is comparable to $h$. For each $j$, let
$\Gamma_{B_j}$ be the union of the two segments of $\T_{\Gamma,h}$ containing
$B_j$. Moreover, we set $\Gamma_V=\cup_{i=1}^{N_e}\Gamma_{A_i}$.

We want to define $\bg_h(B_k)$ in such a way that
\[
0=\int_\Gamma \bg_h\cdot\bn=\int_{\Gamma\setminus(\Gamma_V\cup\Gamma_{B_k})}\bg_h\cdot\bn
+ \int_{\Gamma_V}\bg_h\cdot\bn + \int_{\Gamma_{B_k}}\bg_h\cdot\bn,
\]
or, equivalently,
\[
\int_{\Gamma_{B_k}}\bg_h\cdot\bn
=-\int_{\Gamma\setminus(\Gamma_V\cup\Gamma_{B_k})}\bg_h\cdot\bn
-\int_{\Gamma_V}\bg_h\cdot\bn
= -\int_{\Gamma\setminus\Gamma_{B_k}}\bg_h\cdot\bn.
\]
But
\[
\aligned
     \int_{\Gamma_{B_k}}\bg_h\cdot\bn
&=\frac12 h_{k-1} \left[\bg(B_{k-1})+\bg_h(B_k)\right]\cdot\bn +
\frac12h_k\left[\bg_h(B_k)+\bg(B_{k+1})\right]\cdot\bn\\ &=
\frac12\left[h_{k-1}\bg(B_{k-1})+h_k\bg(B_{k+1})\right]\cdot\bn +
\frac12(h_{k-1}+h_k)\bg_h(B_k)\cdot\bn.
     \endaligned
\]
We introduce
\[
\aligned
L_1(\bg) &= -\int_{\Gamma\setminus\Gamma_{B_k}}\bg_h\cdot\bn
- \frac12\left[h_{k-1}\bg(B_{k-1})+h_k\bg(B_{k+1})\right]\cdot\bn.\\
\endaligned
\]
Notice that the integral
$\int_{\Gamma\setminus\Gamma_{B_k}}\bg_h\cdot\bn$ appears in the definition of
$L_1$. Actually, $\bg_h$
has been already defined in all the boundary nodes except for $B_k$ using the
values of $\bg$. Hence the notation $L_1(\bg)$ is consistent.

We define the value $\bg_h(B_k)$ such that
\begin{equation}
\label{eq: syst}\aligned
     \frac12(h_{k-1}+h_k)\bg_h(B_k)\cdot\bn &=
L_1(\bg)\\
\frac12(h_{k-1}+h_k)\bg_h(B_k)\cdot{\bf t} &=0,
     \endaligned
\end{equation}
where ${\bf t}$ denotes the unit tangential vector on $\Gamma$.
Taking into account that $\bg$ satisfies the compatibility condition, we have
\[
\aligned
L_1(\bg) &= \int_{\Gamma\setminus(\Gamma_V\cup\Gamma_{B_k})}(\bg-\bg_h)\cdot\bn
+\int_{\Gamma_V}(\bg-\bg_h)\cdot\bn +\int_{\Gamma_{B_k}}\bg\cdot\bn\\
&\qquad- \frac12\left[h_{k-1}\bg(B_{k-1})+h_k\bg(B_{k+1})\right]\cdot\bn.
\endaligned
\]
The first term can be bounded using standard results for interpolation errors on $\Gamma\setminus(\Gamma_V\cup\Gamma_{B_k})$.
To bound the other three terms, we use that
$\|\bg\|_{L^\infty(\Gamma)}\le \normg{1}$ and that the
length of the integration set is less than $h$. Then we obtain
\begin{equation}\label{eq: L1}
     \left|L_1(\bg)\right|\le C h \normg{1}.
\end{equation}
It is easy to check that the matrix of the system \eqref{eq: syst}
(for $\bg_h(B_k)$) is nonsingular and its inverse has norm of order $h^{-1}$.
So that we have
\begin{equation}
\label{eq: estim gh(Bk)}
     |\bg_h(B_k)|\le C\normg{1},
\end{equation}
where $|\bg_h(B_k)|$ stands for the Euclidean norm of the vector $\bg_h(B_k)$.
Therefore $\bg_h$ is defined on the entire $\Gamma$ and satisfies the
compatibility condition and the bound~\eqref{eq: norminfgh2d}.
\end{proof}

In the proof of the next proposition, we will use the embedding inequality for
$0\le s<\frac12$,
\begin{equation}\label{eq: embedding}
\|\phi\|_{L^q(\Gamma)}\le C_s\|\phi\|_{s,\Gamma}, \qquad \forall
\phi\in H^s(\Gamma)
\end{equation}
with  $q=\frac2{1-2s}$ and

\begin{equation}
\label{C1s}
C_s\sim \sqrt{\frac1{1-2s}}
\qquad \mbox{when } s\to \left(\frac12\right)^-.
\end{equation}
Inequality~\eqref{eq: embedding} is proved in~\cite[Theorem 1.1]{CT} in
$\RE$. The analogous result follows for an interval, and therefore for
$\Gamma$, by using an extension theorem.

\begin{proposition}
\label{pr: g-g_h}
For all $0\le s<\frac12$ we have
$$
\|\bg-\bg_h\|_{-s,\Gamma}\le \frac{C}{\sqrt{1-2s}} h^{\frac12+s} \normg{1}
$$
\end{proposition}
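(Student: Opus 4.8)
The plan is to estimate $\|\bg-\bg_h\|_{-s,\Gamma}$ by duality, writing
\[
\|\bg-\bg_h\|_{-s,\Gamma}=\sup_{0\ne\phi\in H^s(\Gamma)}\frac{(\bg-\bg_h,\phi)_\Gamma}{\|\phi\|_{s,\Gamma}},
\]
and bounding the numerator $(\bg-\bg_h,\phi)_\Gamma$ for a fixed but arbitrary $\phi$. First I would split the boundary into three regions: $\Gamma\setminus(\Gamma_V\cup\Gamma_{B_k})$, where $\bg_h$ is the ordinary piecewise linear Lagrange interpolant of a function that is $H^1$ on each edge; the union $\Gamma_V$ of the edge-neighborhoods of the vertices, where $\bg$ may be discontinuous; and the single segment-pair $\Gamma_{B_k}$ where $\bg_h$ was modified. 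On the good region I would use the standard interpolation error bound $\|\bg-\bg_h\|_{0,E}\le Ch_E|\bg|_{1,E}$ elementwise together with a negative-norm (Aubin--Nitsche type) duality trick: for a piecewise linear interpolant, $(\bg-\bg_h,\phi)_E$ picks up an extra factor $h_E$ against a piecewise constant approximation of $\phi$, and the remaining $\phi$-regularity is measured in $H^s$. The bad regions have total measure $O(h)$, and on them both $\bg$ (by the embedding $H^1(\Gamma_i)\hookrightarrow L^\infty$) and $\bg_h$ (by \eqref{eq: norminfgh2d}) are bounded by $C\normg{1}$, so there one simply uses H\"older's inequality.

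The key computation is the H\"older step on the small set $S$ of measure $O(h)$: with $q=\tfrac{2}{1-2s}$ and its conjugate exponent $q'=\tfrac{2}{1+2s}$,
\[
\left|\int_S(\bg-\bg_h)\cdot\phi\,d\gamma\right|\le \|\bg-\bg_h\|_{L^\infty(S)}\,|S|^{1/q'}\,\|\phi\|_{L^q(S)}
\le C\normg{1}\,h^{(1+2s)/2}\,C_s\,\|\phi\|_{s,\Gamma},
\]
where I invoke the embedding inequality \eqref{eq: embedding} and the sharp constant behavior \eqref{C1s}, which gives the factor $C_s\sim(1-2s)^{-1/2}$. This is exactly where the $1/\sqrt{1-2s}$ in the statement comes from, and it is the main point to get right: one must measure $\phi$ on the small set in the scale-invariant $L^q$ norm (not in $H^s$ directly) so that the small measure of $S$ produces the power $h^{1/2+s}$ after the conjugate exponent $q'$ is computed, $|S|^{1/q'}=(Ch)^{(1+2s)/2}$. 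I expect this bookkeeping — reconciling the interpolation-error contribution from the regular part (which naturally yields $h^{1+s}$, hence is of higher order) with the dominant $h^{1/2+s}$ from the $O(h)$-measure bad set, and tracking the $s$-dependence of every constant — to be the main obstacle, though none of the individual estimates is deep.

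On the regular part $\Gamma_r:=\Gamma\setminus(\Gamma_V\cup\Gamma_{B_k})$ I would argue: for each element $E\subset\Gamma_r$, let $\bar\phi_E$ be the $L^2(E)$-average of $\phi$; then $(\bg-\bg_h,\phi)_E=(\bg-\bg_h,\phi-\bar\phi_E)_E$ because $\bg-\bg_h$ has, after a further subtraction, mean zero on $E$ up to interpolation error — more directly, $\|\bg-\bg_h\|_{0,E}\le Ch_E|\bg|_{1,E}$ and $\|\phi-\bar\phi_E\|_{0,E}\le Ch_E^{s}|\phi|_{s,E}$ (a fractional Poincar\'e estimate on an interval), so
\[
\left|(\bg-\bg_h,\phi)_{\Gamma_r}\right|\le \sum_{E\subset\Gamma_r}Ch_E^{1+s}|\bg|_{1,E}|\phi|_{s,E}\le Ch^{1+s}\normg{1}\|\phi\|_{s,\Gamma},
\]
using Cauchy--Schwarz over $E$ and that $\sum_E|\phi|_{s,E}^2\le C\|\phi\|_{s,\Gamma}^2$ for a shape-regular boundary mesh. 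Since $h^{1+s}\le h^{1/2+s}$, this term is absorbed into the claimed bound. Collecting the three contributions, dividing by $\|\phi\|_{s,\Gamma}$, and taking the supremum over $\phi$ yields
\[
\|\bg-\bg_h\|_{-s,\Gamma}\le \frac{C}{\sqrt{1-2s}}\,h^{\frac12+s}\,\normg{1},
\]
which is the assertion of the proposition.
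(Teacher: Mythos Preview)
Your overall strategy is sound and reaches the stated bound, but it is organized differently from the paper's proof and one step does not hold as written.

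The paper applies a single global H\"older inequality on all of $\Gamma$ with exponents $p=\tfrac{2}{1+2s}$ and $q=\tfrac{2}{1-2s}$: first $\|\bg-\bg_h\|_{-s,\Gamma}\le C_s\|\bg-\bg_h\|_{L^p(\Gamma)}$ via the embedding \eqref{eq: embedding}, and then $\|\bg-\bg_h\|_{L^p(\Gamma)}$ is estimated by splitting $\Gamma$ into the good and bad regions. On the good region the standard $W^{1,p}$ interpolation bound gives a contribution $O(h^p)$, on the bad region of measure $O(h)$ the $L^\infty$ bound gives $O(h)$; hence $\|\bg-\bg_h\|_{L^p(\Gamma)}\le Ch^{1/p}\normg{1}=Ch^{1/2+s}\normg{1}$. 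This is cleaner than your route because it needs no elementwise duality argument on the good region; the only $s$-dependent constant is the single embedding constant $C_s$.

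Your treatment of the bad set $S$ is correct and is essentially the same computation, localized. The gap is in your good-region estimate: you write $(\bg-\bg_h,\phi)_E=(\bg-\bg_h,\phi-\bar\phi_E)_E$, which would require $\int_E(\bg-\bg_h)=0$. The piecewise linear Lagrange interpolant does \emph{not} preserve element means (the trapezoidal rule is not exact beyond affine functions), so this identity fails and your claimed $h_E^{1+s}$ bound is unjustified. The fix, however, is immediate and harmless: the crude Cauchy--Schwarz estimate
\[
\bigl|(\bg-\bg_h,\phi)_{\Gamma_r}\bigr|\le\|\bg-\bg_h\|_{0,\Gamma_r}\|\phi\|_{0,\Gamma_r}\le Ch\,\normg{1}\,\|\phi\|_{s,\Gamma}
\]
already gives order $h\le h^{1/2+s}$ with a constant independent of $s$, so this term is absorbed into the final bound without affecting the factor $(1-2s)^{-1/2}$.
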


\begin{proof}
Let us set $p=\frac2{1+2s}$ and $q=\frac2{1-2s}$ its dual exponent.
Using  the H\"older inequality and the embedding
inequality~\eqref{eq: embedding}, we have
\begin{equation}
\label{eq: H_s}
\aligned
\|\bg-\bg_h\|_{-s,\Gamma} &=
\sup_{\phi: \|\phi\|_{s,\Gamma}=1}\int_\Gamma\left(\bg-\bg_h\right)\phi\\
&\le\sup_{\phi:
|\phi\|_{s,\Gamma}=1}\|\bg-\bg_h\|_{L^p(\Gamma)}\|\phi\|_{L^q(\Gamma)}\\
&\le C_s\|\bg-\bg_h\|_{L^p(\Gamma)}.
\endaligned
\end{equation}
Since $\bg_h$ coincides with the Lagrange interpolation of $\bg$ on
$\Gamma\setminus\left(\Gamma_V\cup\Gamma_{B_k}\right)$,
$|\Gamma_V\cup\Gamma_{B_k}|\le Ch$, and $1<p<2$, we have
\[
\aligned
\|\bg-\bg_h\|_{L^p(\Gamma)}^p&=
\|\bg-\bg_h\|_
{L^p\left(\Gamma\setminus\left(\Gamma_V\cup\Gamma_{B_k}\right)\right)}^p
+\|\bg-\bg_h\|_{L^p\left(\Gamma_V\cup\Gamma_{B_k}\right)}^p\\
&\le 
Ch^p\|\bg\|_{W^{1,p}(\Gamma\setminus\left(\Gamma_V\cup\Gamma_{B_k}\right))}^p 
+ Ch\|\bg\|_{L^\infty\left(\Gamma_V\cup\Gamma_{B_k}\right)}^p\\
&\le Ch\sum_{i=1}^{N_e}\|\bg\|_{H^1(\Gamma_i)}^p
\endaligned
\]
which together with \eqref{eq: H_s} yields,
\[
\|\bg-\bg_h\|_{-s,\Gamma}\le C\, C_s h^\frac1p \normg{1} \, .
\]
Using \eqref{C1s} and recalling that $p=\frac2{1+2s}$, we conclude the proof.
\end{proof}

In the next proposition we obtain a quasi-uniform in $h$ estimate
of the $H^\frac12$-norm of $\bg_h$.
\begin{proposition}
\label{pr: est_g_h}
If the family of meshes $\T_{\Gamma,h}$ is quasi-uniform we have
\[
\|\bg_h\|_{\frac12,\Gamma}\le C|\log h|\,\normg{1}.
\]
\end{proposition}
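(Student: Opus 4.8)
The plan is to bound $\|\bg_h\|_{\frac12,\Gamma}$ by interpolating between the $L^2$ estimate and an $H^1$-type estimate, the latter being where the logarithm enters. First I would record that, by Lemma \ref{le: gh2D} together with the standard Lagrange interpolation stability, $\bg_h$ is controlled in $\bL^\infty(\Gamma)$ (hence in $\bL^2(\Gamma)$) by $\normg{1}$; combined with Proposition \ref{pr: g-g_h} at $s=0$ one also gets $\|\bg-\bg_h\|_{0,\Gamma}\le Ch^{\frac12}\normg{1}$, so in particular $\|\bg_h\|_{0,\Gamma}\le C\normg{1}$. The point is that we cannot expect $\|\bg_h\|_{1,\Gamma}\le C\normg{1}$ uniformly, because $\bg$ itself is only piecewise $H^1$ and $\bg_h$ bridges the jumps at the vertices $A_i$ (and at the modified node $B_k$) over intervals of length $\sim h$; on each such bridging interval $\bg_h'$ is of size $O(1/h)$ over a set of measure $O(h)$, contributing $O(1/h)$ to $\|\bg_h'\|_{0,\Gamma}^2$, i.e. $\|\bg_h\|_{1,\Gamma}\le C h^{-1/2}\normg{1}$.

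The natural route is therefore: on the ``good'' part $\Gamma\setminus(\Gamma_V\cup\Gamma_{B_k})$, $\bg_h$ is the genuine piecewise-linear interpolant of a piecewise-$H^1$ function, so there $\|\bg_h\|_{1}$ is bounded by $\normg{1}$ uniformly; on the ``bad'' part (finitely many intervals of length $O(h)$) one has the crude bound $\|\bg_h\|_{1,\Gamma_i}\le C h^{-1/2}\|\bg_h\|_{L^\infty}\le C h^{-1/2}\normg{1}$. To pass from these pieces to the global $H^{1/2}$ norm without paying a full $h^{-1/2}$, I would use a dyadic/localization argument on the bridging intervals: decompose $\bg_h$ near each singular node as a sum of functions supported on nested intervals of length $2^{-j}$ down to length $h$, estimate the $H^{1/2}$ norm of each piece using the scaling of the $H^{1/2}$ seminorm (which is scale-invariant in one dimension) against the $L^\infty$ bound, and sum the $O(|\log h|)$ dyadic levels. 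Equivalently and more cleanly, one can interpolate: writing $\bg_h = \bg_h^{\mathrm{good}} + \bg_h^{\mathrm{bad}}$, bound $\|\bg_h^{\mathrm{bad}}\|_{1/2,\Gamma}$ by the space-interpolation inequality $\|v\|_{1/2}\le C\|v\|_0^{1/2}\|v\|_1^{1/2}$ applied on each bridging interval — but that gives only $h^{1/4}\cdot h^{-1/4}=O(1)$ per interval, which is already bounded, so the logarithm must instead come from summing the interactions between the $O(1/h)$ mesh intervals through the double-integral (Gagliardo) form of the $H^{1/2}$ seminorm.

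Concretely, then, the cleanest execution: use the Gagliardo seminorm $|\bg_h|_{1/2,\Gamma}^2=\int_\Gamma\int_\Gamma \frac{|\bg_h(x)-\bg_h(y)|^2}{|x-y|^2}\,dx\,dy$, split the domain of integration according to whether $|x-y|$ is smaller or larger than $h$. For $|x-y|\le h$ use the Lipschitz bound $|\bg_h(x)-\bg_h(y)|\le \|\bg_h'\|_{L^\infty}|x-y|$ on each mesh interval; since $\|\bg_h'\|_{L^\infty}\le C h^{-1}\normg{1}$ on the bridging intervals and $\le C\normg{1}$ elsewhere, the near-diagonal contribution is $O(\normg{1}^2)$. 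For $|x-y|\ge h$ use $|\bg_h(x)-\bg_h(y)|\le 2\|\bg_h\|_{L^\infty}\le C\normg{1}$, giving $\int\int_{|x-y|\ge h}\frac{C\normg{1}^2}{|x-y|^2}\le C\normg{1}^2 |\log h|$, which is the source of the logarithm. Adding the $L^2$ bound $\|\bg_h\|_{0,\Gamma}^2\le C\normg{1}^2$ and taking square roots gives $\|\bg_h\|_{1/2,\Gamma}\le C|\log h|^{1/2}\normg{1}\le C|\log h|\,\normg{1}$, as claimed. The main obstacle is organizing the near-diagonal term so that the $O(1/h)$ blow-up of $\|\bg_h'\|_{L^\infty}$ on the $O(h)$-length bridging intervals still integrates to something bounded (it does, precisely because $\int\int_{|x-y|\le h} h^{-2}\,dx\,dy$ over a fixed $O(h)$ interval is $O(1)$), and checking that the finitely many vertices $A_i$ together with the single modified node $B_k$ are the only places where the uniform interpolation estimate fails.
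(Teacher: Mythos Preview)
Your approach is genuinely different from the paper's. The paper does not touch the Gagliardo form at all: it compares $\bg_h$ with the Carstensen interpolant $\tilde\bg_h$, uses the inverse inequality $\|\bg_h-\tilde\bg_h\|_{1/2,\Gamma}\le Ch^{-1/2}\|\bg_h-\tilde\bg_h\|_{0,\Gamma}$ together with $\|\tilde\bg_h\|_{1/2,\Gamma}\le Ch^{s-1/2}\|\tilde\bg_h\|_{s,\Gamma}\le Ch^{s-1/2}\|\bg\|_{s,\Gamma}$, bounds the $L^2$ pieces via Proposition~\ref{pr: g-g_h} and \eqref{eq: Apel_s}, invokes \eqref{eq: normsGamma} to pass from $\|\bg\|_{s,\Gamma}$ to $\normg{1}$, and finally optimizes over $s\in(0,1/2)$ by taking $1-2s=1/|\log h|$. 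A direct Gagliardo computation of the kind you outline is in fact what Remark~\ref{remark sobre cota gh en 12} alludes to, and done correctly it even yields the sharper power $|\log h|^{1/2}$.

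However, your execution has a real gap in the far-diagonal piece. On a one-dimensional curve,
\[
\int_\Gamma\int_{\{y:\,|x-y|\ge h\}}\frac{dy\,dx}{|x-y|^2}\ \sim\ \frac{|\Gamma|}{h},
\]
not $|\log h|$; the bound $|\bg_h(x)-\bg_h(y)|\le 2\|\bg_h\|_{L^\infty}$ is therefore far too crude and gives only $|\bg_h|_{1/2,\Gamma}^2\le C h^{-1}\normg{1}^2$. The logarithm cannot come from the bare kernel: it comes from the fact that $|\bg_h(x)-\bg_h(y)|$ is of order $\normg{1}$ \emph{only} when $x$ and $y$ lie on opposite sides of a vertex $A_i$ (or of $B_k$), while for $x,y$ in the same $\Gamma_i$ away from the bridging intervals one has $|\bg_h(x)-\bg_h(y)|\le\int_x^y|\bg_h'|$, controlled in $L^2$ by $\|\bg'\|_{0,\Gamma_i}$. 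The $|\log h|$ then arises exclusively from the cross term $\int_{\{x<A_i\}}\int_{\{y>A_i,\,y-x\ge h\}}|x-y|^{-2}\,dy\,dx\sim\log(1/h)$ at each of the finitely many singular nodes. Your near/far split throws this structure away.

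A minor point: your claim that $\|\bg_h'\|_{L^\infty}\le C\normg{1}$ on the good intervals is also false---on $I_j$ the slope of the interpolant is bounded only by $h_j^{-1/2}\|\bg'\|_{0,I_j}$---but this does not damage the near-diagonal estimate, since $\int_\Gamma\int_{|x-y|\le h}(h^{-1/2}\normg{1})^2\,dy\,dx\le C\normg{1}^2$ anyway.
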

\begin{proof}
Let $\tilde\bg_h$ the Carstensen approximation. Then, for $0<s<1/2$,
inverse estimates imply
\[
\|\bg_h\|_{\frac12,\Gamma}\le \|\bg_h-\tilde\bg_h\|_{\frac12,\Gamma}+\|\tilde\bg_h\|_{\frac12,\Gamma}
\le C\left( h^{-{\frac12}}\|\bg_h-\tilde\bg_h\|_{0,\Gamma}+ h^{s-\frac12}\|\tilde\bg_h\|_{s,\Gamma}\right)
\]
and so, by \eqref{eq: Apel?} and the fact that $\bg\in\bH^s(\Gamma)$,
\[
\aligned
\|\bg_h\|_{\frac12,\Gamma}
&\le C\left( h^{-{\frac12}}\|\bg_h-\tilde\bg_h\|_{0,\Gamma}+
h^{s-\frac12}\|\bg\|_{s,\Gamma}\right)\\
&
\le C  h^{-{\frac12}}\left(\|\bg_h-\bg\|_{0,\Gamma}+\|\bg-\tilde\bg_h\|_{0,\Gamma}+
h^{s}\|\bg\|_{s,\Gamma}\right).
\endaligned
\]
Using Proposition \ref{pr: g-g_h}, \eqref{eq: Apel_s},
and~\eqref{eq: normsGamma}, we obtain
\[
\|\bg_h\|_{\frac12,\Gamma}
\le C \left(\normg{1} + h^{s-\frac12}\|\bg\|_{s,\Gamma}\right)
\le C  \frac{h^{s-\frac12}}{1-2s}\normg{1}.
\]
Choosing $s$ such that $1-2s=1/|\log h|$ we conclude the proof.
\end{proof}

\begin{remark}
\label{remark sobre cota gh en 12}
The quasi-uniformity assumption in the previous proposition is not essential.
We have given the proof under this hypothesis for the sake of simplicity.
However, for a general family of meshes, an elementary but rather technical
computation using the definition of the fractional norm leads to the estimate
$$
\|\bg_h\|_{\frac12,\Gamma}\le C|\log(h_{min})|\,\normg{1}.
$$
where $h_{min}$ denotes the minimum mesh-size of $\T_{\Gamma,h}$.
\end{remark}
\subsection{Three dimensional case}
We assume that the boundary $\Gamma$ is composed by $\Ne$ polygonal faces
$\Gamma_i$ and that $\bg|_{\Gamma_i}\in\bH^2(\Gamma_i)$. Therefore
$\bg\in\bL^\infty(\Gamma)$ and $\|\bg\|_{L^\infty(\Gamma)}\le C\normg{2}$.
Moreover, we can show as in the two dimensional case, that $\bg\in\bH^s(\Gamma)$
for $0\le s<\frac12$ and that
\begin{equation}
\label{eq: normsGamma3}
\|\bg\|_{s,\Gamma}\le \frac{C}{1-2s}\normg{2}.
\end{equation}
Assume that we have a triangular mesh $\mathcal T_{\Gamma,h}$ which is
quasi-uniform. A construction, similar to the one proposed here, can be 
made also in the case of quadrilateral quasi-uniform meshes.

As for the 2D case,
let $\{B_j\}$ be the set of nodes of $\mathcal T_{\Gamma,h}$ and define
\[
E= \bigcup\left\{\overline e: e \mbox{ is an edge of }\Omega\right\}.
\] 
For each node $B_j\in E$, let us choose $T_{B_j}$ an element of $\T_{\Gamma,h}$
such that $B_j\in\overline{T}_{B_j}$. 
Finally let $e_0$ be a polygonal contained in a face $\Gamma_k$ of $\Omega$, with
$|e_0|=O(1)$, made up of sides of triangles in $\mathcal T_{\Gamma,h}$ and such
that triangles with a vertex on $e_0$ do not have vertices on $E$, see
Fig.~\ref{fig: e0} for an example. It is clear that we can take it. We denote
by $\bn_{e_0}$ the normal vector to the face $\Gamma_k$ containing $e_0$.
\begin{figure}
\centering
\includegraphics[width=0.6\textwidth]{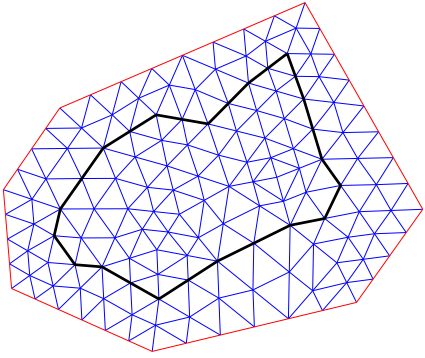}
\caption{A face of $\Omega$, with its mesh and the polygonal $e_0$ in black.} 
\label{fig: e0}
\end{figure}
\begin{lemma}
\label{le: gh3D}
Given $\bg\in \bL^2(\Gamma)$ such that $\bg|_{\Gamma_i}\in\bH^2(\Gamma_i)$
where $\Gamma_i$ for $i=1,\dots,\Ne$ are the faces of $\Gamma$,
there exists a piecewise linear function $\bg_h\in\Gh$ which is a modified
Lagrange interpolant of $\bg$ satisfying the compatibility
condition~\eqref{eq: compatibility} and
\begin{equation}
\label{eq: norminfgh3d}
\|\bg_h\|_{L^\infty(\Gamma)}\le C\normg{2}.
\end{equation}
\end{lemma}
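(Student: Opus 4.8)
The plan is to mimic the two-dimensional construction of Lemma \ref{le: gh2D}, with the complication that in 3D a single scalar modification at one node cannot by itself restore the vectorial compatibility condition $\int_\Gamma \bg_h\cdot\bn = 0$ in an obvious way, so we localize the correction on the special polygonal $e_0$ chosen in a single face $\Gamma_k$, where the normal is the constant vector $\bn_{e_0}$. First I would define the preliminary interpolant $\bg_h^0\in\Gh$ by $\bg_h^0(B_j)=\bg(B_j)$ at every interior-to-a-face node, and at the nodes on $E$ (the union of edges of $\Omega$) by $\bg_h^0(B_j)=\bg|_{T_{B_j}}(B_j)$, i.e.\ the value taken from the chosen adjacent triangle $T_{B_j}$; this is well defined since $\bg|_{\Gamma_i}\in\bH^2(\Gamma_i)\subset C^0(\Gamma_i)$, and it satisfies $|\bg_h^0(B_j)|\le\|\bg\|_{L^\infty(\Gamma)}\le C\normg{2}$. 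This $\bg_h^0$ will in general violate \eqref{eq: compatibility}; let $\delta := \int_\Gamma \bg_h^0\cdot\bn$ be the defect, which is a real number.

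Next I would correct $\bg_h^0$ only at the vertices lying on $e_0$. Because the triangles touching $e_0$ have no vertices on $E$, all the nodal values involved are genuine values of $\bg$ on the face $\Gamma_k$, and because $|e_0|=O(1)$ while the mesh is quasi-uniform, the strip $S$ of triangles with a vertex on $e_0$ has area $\sim h$ and contains $\sim h^{-1}$ such vertices. I would add to $\bg_h^0$ at each such vertex $B$ a correction of the form $c\,\chi_B\,\bn_{e_0}$, where $\chi_B$ is the nodal basis function and $c\in\RE$ is a single scalar to be determined; since $\bn_{e_0}\cdot\bn$ vanishes off $\Gamma_k$ and equals $1$ on $\Gamma_k$, the change in $\int_\Gamma\bg_h\cdot\bn$ is exactly $c\sum_B\int_{\Gamma_k}\chi_B\,ds = c\,m$, where $m=\sum_B\int\chi_B\sim h$ (mass of the relevant nodal functions, all positive and comparable). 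Setting $c=-\delta/m$ enforces \eqref{eq: compatibility} exactly. It remains only to check whether $\bg_h$ stays piecewise linear and continuous — it does, being a sum of nodal functions — and to establish the $L^\infty$ bound \eqref{eq: norminfgh3d}. For that I would bound the defect: since $\bg$ itself satisfies the compatibility condition, $\delta = \int_\Gamma(\bg_h^0-\bg)\cdot\bn$; off the $O(h)$-measure bad set (the strip around $E$ and around the vertices of $\Omega$) this is a standard piecewise-linear interpolation error on $\bH^2$ faces, bounded by $Ch^2\normg{2}$, and on the bad set it is bounded by $Ch\,\|\bg\|_{L^\infty(\Gamma)}\le Ch\normg{2}$; hence $|\delta|\le Ch\normg{2}$. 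Therefore $|c| = |\delta|/m \le Ch\normg{2}/(ch) = C\normg{2}$, and since $\|\bg_h\|_{L^\infty(\Gamma)}\le \|\bg_h^0\|_{L^\infty(\Gamma)}+|c|\le C\normg{2}$, we obtain \eqref{eq: norminfgh3d}.

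The main obstacle is the third paragraph's bookkeeping: one must make sure that the polygonal $e_0$ (which exists by the geometric choice already fixed before the lemma) really can be chosen so that the strip of triangles around it avoids all edge nodes of $E$, so that the correction does not disturb the carefully assigned values near the reentrant/geometric singularities of $\partial\Omega$; and one must verify that the scalar mass $m=\sum_B\int_{\Gamma_k}\chi_B\,ds$ is genuinely of order $h$ and bounded below by $c\,h$ — this uses quasi-uniformity of $\T_{\Gamma,h}$ and the nondegeneracy $|e_0|=O(1)$. Everything else (continuity, piecewise linearity, the interpolation estimate on the good part of $\Gamma$) is routine, as in the 2D proof. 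Once the lemma is set up this way, the subsequent propositions estimating $\|\bg-\bg_h\|_{-s,\Gamma}$ and $\|\bg_h\|_{\frac12,\Gamma}$ will follow exactly the pattern of the 2D case, using \eqref{eq: normsGamma3} in place of \eqref{eq: normsGamma}.
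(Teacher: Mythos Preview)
Your construction is essentially the paper's: define the preliminary Lagrange interpolant (with one-sided values on $E$), then correct at the nodes on $e_0$ in the direction $\bn_{e_0}$, using that the defect is $O(h\,\normg{2})$ while the effective mass of the correction region is $\sim h$, so the correction amplitude is $\le C\normg{2}$. The only difference is cosmetic: you \emph{add} $c\,\bn_{e_0}$ to the existing nodal values and solve a single scalar equation for $c$, whereas the paper \emph{replaces} those values by a constant vector $\balpha$ and pads with the tangential conditions $\balpha\cdot\mathbf{t}_1=\balpha\cdot\mathbf{t}_2=0$ to obtain a $3\times3$ system whose inverse has norm $O(h^{-1})$; both routes give $|c|$, respectively $|\balpha|$, bounded by $C\normg{2}$ and hence \eqref{eq: norminfgh3d}.
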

\begin{proof}
We define the Lagrange interpolation $\bg_h\in\Gh$ of $\bg$ as the
continuous piecewise linear function on $\mathcal T_{\Gamma,h}$ such that for
each node $B_j$ in $\T_{\Gamma,h}$ we have
\[
\bg_h(B_j) = \left\{\begin{array}{cl}
\bg(B_j)&\mbox{if }B_j\not\in (E\cup e_0)\\
\bg|_{T_{B_j}}(B_j)&\mbox{if }B_j\in E\\
\balpha&\mbox{if }B_j\in e_0,
\end{array}\right.
\]   
where $\balpha$ is a vector to be chosen in order to verify the compatibility
condition \eqref{eq: compatibility}.

For a set $A\subset\Gamma$, we denote by $\omega_{\Gamma,A}$ the union of the
closures of the elements in $\mathcal T_{\Gamma,h}$ having a vertex on the closure of
$A$. Then we impose
\[
0  = \int_\Gamma\bg_h\cdot\bn
=\int_{\Gamma\setminus(\omega_{\Gamma,E}\cup\omega_{\Gamma,e_0})}\bg_h\cdot\bn
+ \int_{\omega_{\Gamma,E}}\bg_h\cdot\bn +
\int_{\omega_{\Gamma,e_0}}\bg_h\cdot\bn. 
\]
Let us compute the last term. Clearly, $\omega_{\Gamma,e_0}$ lays on the face
$\Gamma_k$ with normal $\bn_{e_0}$. Each triangle $T$ in $\omega_{\Gamma,e_0}$
has $r_T\ge 1$ vertices on $\overline{e}_0$, that we denote
$P_{T,1},\ldots,P_{T,r_t}$, while $P_{T,r_T+1},\ldots,P_{T,3}$ are the
remaining ones. Then
\[
\int_{\omega_{\Gamma,e_0}}\bg_h\cdot\bn
= \frac13\sum_{T\subset\omega_{\Gamma,e_0}}|T|r_T\balpha\cdot\bn_{e_0}
+ \frac13\sum_{T\subset\omega_{\Gamma,e_0}}|T|\sum_{i=r_T+1}^3\bg_h\cdot\bn_{e_0}
(P_{T,i})
\]
We require that the vector $\alpha$ is such that the following equality holds true
\[
\left(\frac13\sum_{T\subset\omega_{\Gamma,e_0}}|T|r_T\right)
\balpha\cdot\bn_{e_0}=L_1(\bg).
\]
where, taking into account that the continuous solution
satisfies~\eqref{eq: uno},
\[
\aligned
L_1(\bg):=
& \int_{\Gamma\setminus(\omega_{\Gamma,E}\cup\omega_{\Gamma,e_0})}
(\bg-\bg_h)\cdot\bn + \int_{\omega_{\Gamma,E}}(\bg-\bg_h)\cdot\bn\\ 
&\qquad\qquad
+ \int_{\omega_{\Gamma,e_0}} \bg\cdot\bn 
- \frac13\sum_{T\subset\omega_{\Gamma,e_0}}|T|\sum_{i=r_T+1}^3
  \bg_h\cdot\bn_{e_0}(P_{T,i}).
\endaligned
\]
Since $|\omega_{\Gamma,E}|$ and $|\omega_{\Gamma,e_0}|$ are bounded by $Ch$,
using interpolation error estimates, we see that 
\[
|L_1(\bg)|\le C\,h \normg{2}.
\]
In order to be able to find a unique $\balpha$, we add two conditions on the
tangential components obtaining the following system
\[
\aligned
\left(\frac13\sum_{T\subset\omega_{\Gamma,e_0}}|T|r_T\right)
\balpha\cdot\bn_{e_0}
& =L_1(\bg)\\ \balpha\cdot{\bf t}_1 &=0\\ \balpha\cdot{\bf t}_2 &=0
\endaligned
\] 
where ${\bf t}_1$ and $\bf t_2$ are unitary vectors such that together with
$\bn_{e_0}$ form an orthogonal basis of $\mathbb R^3$. This is a linear system
for $\balpha$ whose non singular matrix $M$ verifies
$\|M^{-1}\|\le C\frac1h$ 
since the mesh is quasi-uniform. Therefore, we can find $\balpha$ such that
\begin{equation}
\label{eq: stimaalpha3d}
|\balpha|\le C\normg{2}.
\end{equation}
This inequality, together with the definition of $\bg_h$,
gives~\eqref{eq: norminfgh3d}.
\end{proof} 
In the following proposition we estimate $\|\bg-\bg_h\|_{-s,\Gamma}$. Since
the best possible exponent $q$ in the embedding inequality
\eqref{eq: embedding} depends on the dimension, the
argument used in Proposition~\ref{pr: g-g_h} does not give an optimal
result in the case of a three dimensional domain.
We can give a different argument using a Hardy type inequality.
It will become clear that the same argument can be used for $d=2$, but it gives
a worse constant in terms of $s$ than that obtained in the
Proposition~\ref{pr: g-g_h}.
\begin{proposition}
\label{pr: g-gh3d}
There exists a positive constant, such that, for all $0\le s<\frac12$, the
following bound holds true
\begin{equation}
\label{g-gh 3d}
\|\bg-\bg_h\|_{-s,\Gamma}\le \frac C{1-2s}h^{\frac12+s}\normg{2}.
\end{equation}
\end{proposition}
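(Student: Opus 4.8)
The plan is to bound $\|\bg-\bg_h\|_{-s,\Gamma}=\sup\{\int_\Gamma(\bg-\bg_h)\phi:\ \phi\in\bH^s(\Gamma),\ \|\phi\|_{s,\Gamma}=1\}$ by splitting $\Gamma$ according to the construction in Lemma~\ref{le: gh3D}. Away from the neighbourhood $\omega:=\omega_{\Gamma,E}\cup\omega_{\Gamma,e_0}$ of the ``bad set'' $E\cup e_0$, the function $\bg_h$ is exactly the piecewise linear Lagrange interpolant of $\bg$ on each face, whereas on $\omega$ it is only bounded in $L^\infty$; moreover, since $\T_{\Gamma,h}$ is quasi-uniform, $|\omega|\le Ch$ and every point of $\omega$ is within distance $Ch$ of $E\cup e_0$. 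Writing $\Gamma_0:=\Gamma\setminus\omega$, I would estimate $\int_{\Gamma_0}(\bg-\bg_h)\phi$ and $\int_{\omega}(\bg-\bg_h)\phi$ separately and take the supremum over $\phi$ at the end.

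On $\Gamma_0$ every triangle lies in a single face $\Gamma_i$, has no vertex on $E\cup e_0$, and there $\bg_h$ equals the Lagrange interpolant of $\bg|_{\Gamma_i}\in\bH^2(\Gamma_i)$; hence the standard estimate (as in~\eqref{eq: interp}) gives $\|\bg-\bg_h\|_{0,T}\le Ch^2|\bg|_{2,T}$ on each such $T$. Summing over $T$, using $\sum_i|\bg|_{2,\Gamma_i}^2\le\normg{2}^2$ together with $\|\phi\|_{0,\Gamma}\le\|\phi\|_{s,\Gamma}=1$, yields
\[
\Big|\int_{\Gamma_0}(\bg-\bg_h)\phi\Big|\le \|\bg-\bg_h\|_{0,\Gamma_0}\|\phi\|_{0,\Gamma}\le Ch^{2}\normg{2}\le Ch^{\frac12+s}\normg{2},
\]
the last inequality because $\tfrac12+s<2$ (and $h\le1$).

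For the part over $\omega$ I would use $\|\bg-\bg_h\|_{L^\infty(\omega)}\le C\normg{2}$, which follows from the hypothesis $\|\bg\|_{L^\infty(\Gamma)}\le C\normg{2}$ and from~\eqref{eq: norminfgh3d}, so that it remains to bound $\int_\omega|\phi|$. With $\delta(x):=\operatorname{dist}(x,E\cup e_0)$, inserting $\delta^{s}\delta^{-s}$ and applying Cauchy--Schwarz,
\[
\int_\omega|\phi|\le\Big(\int_\omega\delta^{2s}\Big)^{\frac12}\Big(\int_\Gamma\delta^{-2s}|\phi|^{2}\Big)^{\frac12}\le (Ch)^{s}(Ch)^{\frac12}\Big(\int_\Gamma\delta^{-2s}|\phi|^{2}\Big)^{\frac12},
\]
since $\delta\le Ch$ on $\omega$ and $|\omega|\le Ch$. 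The crucial ingredient is then a fractional Hardy inequality with respect to the compact, codimension-one set $E\cup e_0\subset\Gamma$: for $0\le s<\tfrac12$,
\[
\int_\Gamma\delta^{-2s}|\phi|^{2}\le \frac{C}{(1-2s)^{2}}\,\|\phi\|_{s,\Gamma}^{2}\qquad\forall\,\phi\in\bH^s(\Gamma).
\]
Granting this, $\int_\omega|\phi|\le \tfrac{C}{1-2s}h^{\frac12+s}$, hence $|\int_\omega(\bg-\bg_h)\phi|\le\tfrac{C}{1-2s}h^{\frac12+s}\normg{2}$, and combining with the $\Gamma_0$-estimate gives~\eqref{g-gh 3d}.

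The main obstacle is the Hardy inequality with the stated dependence on $s$. I would derive it from a finite partition of unity subordinate to charts that locally flatten $E\cup e_0$ — to a piece of a line in $\RE^2$ near the edges of $\Omega$, and to a segment interior to a face near $e_0$ — thereby reducing to the one-dimensional fractional Hardy inequality, equivalently to the fact that $H^s_{00}$ and $H^s$ of an interval agree with equivalence constant $O((1-2s)^{-1})$; this is the same computation underlying~\eqref{extension por cero}. The one delicate point is that $e_0$ lies in the interior of a face, so distance-to-boundary is not available there; this is handled by the interior version of the Hardy inequality, which only adds a harmless lower-order term $C\|\phi\|_{0,\Gamma}^2$ to the right-hand side above. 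Finally, the same argument applies verbatim when $d=2$, with $\normg{1}$ in place of $\normg{2}$ and the vertices $A_i$ together with the node $B_k$ playing the role of $E\cup e_0$; it recovers the exponent $h^{\frac12+s}$ but only with the factor $C/(1-2s)$, which near $s=\tfrac12$ is worse than the $C/\sqrt{1-2s}$ obtained in Proposition~\ref{pr: g-g_h} through the sharp Sobolev embedding~\eqref{eq: embedding}.
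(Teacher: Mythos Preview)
Your argument is correct and follows the same strategy as the paper's proof: isolate a thin neighbourhood of the singular set where $\bg_h$ is only $L^\infty$-bounded, and control the pairing with $\phi$ there via a fractional Hardy inequality carrying the explicit $(1-2s)^{-1}$ constant. The execution differs only in how the Hardy weight is chosen. The paper works face by face with $d_i(x)=\operatorname{dist}(x,\partial\Gamma_i)$ and the standard estimate $\|d_i^{-s}\phi\|_{0,\Gamma_i}\le C(1-2s)^{-1}\|\phi\|_{s,\Gamma_i}$ (reduced to the half-space result of \cite{BD}), then bounds $\|(\bg-\bg_h)d_i^s\|_{0,\Gamma_i}$ by splitting $\Gamma_i$ into $\{d_i\le C_1h\}$ and its complement. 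To avoid precisely the interior-segment difficulty you flag, the paper simply assumes --- which it may, since $e_0$ is at our disposal in Lemma~\ref{le: gh3D} --- that $e_0$ is chosen close to $\partial\Gamma_k$, so that $\omega_{\Gamma,e_0}\subset\{d_k\le C_1h\}$ as well; no interior Hardy inequality is then needed. Your version trades this placement hypothesis for a slightly less standard Hardy estimate with weight $\operatorname{dist}(\cdot,E\cup e_0)$; both routes are valid and yield the same bound. Your closing remark on the $2$D constant also matches the paper's own comment preceding the proposition.
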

\begin{proof}
For each $\Gamma_i$, face of $\Omega$, and $x\in\Gamma_i$, we denote by $d_i(x)$ the
distance of $x$ from $\partial\Gamma_i$. There exists a constant
$C$ such that, for $0\le s<\frac12$ and every $\phi\in H^s(\Gamma_i)$, we have
\begin{equation}
\label{hardy inequality}
\left\|\frac{\phi}{d_i^s}\right\|_{0,\Gamma_i} \le
\frac{C}{1-2s}\|\phi\|_{s,\Gamma_i}.
\end{equation}
This estimate with a precise constant is proved in \cite{BD} for the half-space,
by standard argument, one can show that the behavior of the constant
in terms of $s$ is the same for Lipschitz bounded domains.

For simplicity let us assume that the polygonal $e_0$ chosen in the
construction of $\bg_h$ is close to the boundary of 
$\Gamma_k$, i.e., if $x\in e_0$, then $d_k(x)\le C_1 h$ for some constant $C_1$.
Then, for any $\phi\in H^s(\Gamma)$,
$$
\int_\Gamma \left(\bg-\bg_h\right)\phi
=\sum_{i=1}^{N_e}\int_{\Gamma_i}\left(\bg-\bg_h\right)\phi\\
\le \sum_{i=1}^{N_e}\|(\bg-\bg_h)d_i^s\|_{0,\Gamma_i}
\left\|\frac{\phi}{d_i^s}\right\|_{0,\Gamma_i},
$$
and therefore, using \eqref{hardy inequality}, we obtain
$$
\|\bg-\bg_h\|_{-s,\Gamma} = \sup_{\phi:
\|\phi\|_s=1}\int_\Gamma\left(\bg-\bg_h\right)\phi \le
\frac{C}{1-2s}\sum_{i=1}^{N_e}\|(\bg-\bg_h)d_i^s\|_{0,\Gamma_i}.
$$
But,
\[
\aligned \|(\bg-\bg_h)d_i^s\|^2_{0,\Gamma_i}
&=\int_{\{x\in\Gamma_i: d_i(x)\le C_1h\}}(\bg-\bg_h)^2d_i^{2s}
+\int_{\{x\in\Gamma_i: d_i(x)> C_1h\}}(\bg-\bg_h)^2d_i^{2s}\\
&\le C \left(h^{2s+1} \|\bg\|^2_{L^\infty(\Gamma)} + h^2
\|\bg\|^2_{1,\Gamma_i}\right) \quad\text{for }i\ne k,\\
\|(\bg-\bg_h)d_k^s\|^2_{0,\Gamma_k}
&\le C \left(h^{2s+1} \|\bg\|^2_{L^\infty(\Gamma)}+h^{2s+1}\normg{2}^2
+ h^2 \|\bg\|^2_{1,\Gamma_k}\right) 
\endaligned
\]
where, for the first term, we have used that
$|\{x\in\Gamma_i: d_i(x)\le C_1h\}|\le Ch$, that 
$\|\bg_h\|_{L^\infty(\Gamma)}\le C\|\bg\|_{L^\infty(\Gamma)}$, and
inequality~\eqref{eq: stimaalpha3d}, while, for the second one, that
$\bg_h$ agrees with the Lagrange interpolation. Hence, we conclude
that, for all $0\le s<\frac12$, the bound~\eqref{g-gh 3d} holds true.
\end{proof}

The next proposition can be proved using the same argument as in
Proposition~\ref{pr: est_g_h}.
\begin{proposition}
\label{pr: est_g_h3d}
If the family of meshes $\T_{\Gamma,h}$ is quasi-uniform we have
\[
\|\bg_h\|_{\frac12,\Gamma}\le C|\log h|\,\normg{2}.
\]
\end{proposition}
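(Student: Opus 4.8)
The plan is to mimic, essentially line by line, the proof of Proposition~\ref{pr: est_g_h}, substituting the three-dimensional ingredients built above for their two-dimensional analogues. First I would fix the Carstensen interpolant $\tilde\bg_h\in\Gh$ of $\bg$ and split, by the triangle inequality,
\[
\|\bg_h\|_{\frac12,\Gamma}\le\|\bg_h-\tilde\bg_h\|_{\frac12,\Gamma}+\|\tilde\bg_h\|_{\frac12,\Gamma}.
\]
Since $\T_{\Gamma,h}$ is quasi-uniform, an inverse inequality bounds the first term by $Ch^{-\frac12}\|\bg_h-\tilde\bg_h\|_{0,\Gamma}$ and, for any $0<s<\frac12$, the second by $Ch^{s-\frac12}\|\tilde\bg_h\|_{s,\Gamma}$; because $\bg\in\bH^s(\Gamma)$ for $0\le s<\frac12$ (recall~\eqref{eq: normsGamma3}), the stability estimate~\eqref{eq: Apel?} then yields $\|\tilde\bg_h\|_{s,\Gamma}\le C\|\bg\|_{s,\Gamma}$.

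Next I would further split $\|\bg_h-\tilde\bg_h\|_{0,\Gamma}\le\|\bg_h-\bg\|_{0,\Gamma}+\|\bg-\tilde\bg_h\|_{0,\Gamma}$, estimating the first summand by $Ch^{\frac12}\normg{2}$ via Proposition~\ref{pr: g-gh3d} taken with $s=0$, and the second by $Ch^{s}\|\bg\|_{s,\Gamma}$ via~\eqref{eq: Apel_s} taken with $s=0$ and $t=s$. Collecting the pieces and using~\eqref{eq: normsGamma3} to write $\|\bg\|_{s,\Gamma}\le\frac{C}{1-2s}\normg{2}$, I would reach
\[
\|\bg_h\|_{\frac12,\Gamma}\le C\bigl(\normg{2}+h^{s-\frac12}\|\bg\|_{s,\Gamma}\bigr)\le C\,\frac{h^{s-\frac12}}{1-2s}\,\normg{2}.
\]
Choosing $s$ so that $1-2s=1/|\log h|$ makes $h^{s-\frac12}=h^{-1/(2|\log h|)}$ a bounded constant while $1/(1-2s)=|\log h|$, which gives the claimed bound $\|\bg_h\|_{\frac12,\Gamma}\le C|\log h|\,\normg{2}$.

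I do not expect a genuine obstacle here: the whole difficulty was already absorbed into the construction of Lemma~\ref{le: gh3D} and into Proposition~\ref{pr: g-gh3d}. The one point that must not be overlooked is that the Sobolev-embedding argument used in the two-dimensional Proposition~\ref{pr: g-g_h} degrades in three dimensions, so the $L^2(\Gamma)$-bound on $\bg-\bg_h$ has to be imported from the Hardy-inequality-based Proposition~\ref{pr: g-gh3d}; with that substitution in place, the balancing of the two error contributions and the logarithmic optimization in $s$ proceed exactly as in dimension two.
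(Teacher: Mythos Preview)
Your proposal is correct and follows exactly the route the paper intends: the authors explicitly state that Proposition~\ref{pr: est_g_h3d} ``can be proved using the same argument as in Proposition~\ref{pr: est_g_h}'', and your write-up supplies precisely that argument with the obvious substitutions (Proposition~\ref{pr: g-gh3d} in place of Proposition~\ref{pr: g-g_h}, and \eqref{eq: normsGamma3} in place of \eqref{eq: normsGamma}). There is nothing to add.
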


We are ready to prove the main theorem of the section.

\begin{theorem}
Let $\Omega\subset\mathbb R^d$, $d=2$ or $3$, be a convex polygonal or polyhedral domain.  Suppose that
$\bg|_{\Gamma_i}\in H^{d-1}(\Gamma_i)$ for all $\Gamma_i$ and that the family of
meshes $\T_{\Gamma,h}$ is quasi-uniform. Let $\bg_h$ be given by the modified
Lagrange interpolation of $\bg$ introduced in Lemmas~\ref{le: gh2D}
and~\ref{le: gh3D}. Then, we have
\begin{enumerate}
\item For $\Omega\subset\RE^2$ a convex polygonal domain
\[
\|\bu-\bu_h\|_{0,\Omega}+\|p-p_h\|_{H^{-1}(\Omega)/\mathbb R} \le C h|\log h|^{\frac32} \normg{1},
\]
\item For $\Omega\subset\RE^3$ a convex polyhedral domain
\[
\|\bu-\bu_h\|_{0,\Omega}+\|p-p_h\|_{H^{-1}(\Omega)/\mathbb R} \le C h|\log h|^2 \normg{2}.
\]
\end{enumerate}
\end{theorem}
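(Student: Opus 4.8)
The plan is to bound the total error by the triangle inequality, splitting it into the regularization error $\|\bu-\bu(h)\|_{0,\Omega}+\|p-p(h)\|_{H^{-1}(\Omega)/\mathbb R}$ and the finite element error $\|\bu(h)-\bu_h\|_{0,\Omega}+\|p(h)-p_h\|_{H^{-1}(\Omega)/\mathbb R}$, and to estimate each separately, following the same scheme used for estimate~\eqref{eq: estimate_t}.

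For the regularization error I would apply Proposition~\ref{pr: apriori}, which for every $0\le s<\frac12$ bounds it by $\frac{C}{1-2s}\|\bg-\bg_h\|_{-s,\Gamma}$, and then insert the estimate for the modified Lagrange interpolant: Proposition~\ref{pr: g-g_h} in the two dimensional case, which gives $\|\bg-\bg_h\|_{-s,\Gamma}\le \frac{C}{\sqrt{1-2s}}h^{\frac12+s}\normg{1}$, and Proposition~\ref{pr: g-gh3d} in the three dimensional case, which gives $\|\bg-\bg_h\|_{-s,\Gamma}\le \frac{C}{1-2s}h^{\frac12+s}\normg{2}$. This bounds the regularization error by $\frac{C}{(1-2s)^{3/2}}h^{\frac12+s}\normg{1}$ when $d=2$ and by $\frac{C}{(1-2s)^{2}}h^{\frac12+s}\normg{2}$ when $d=3$. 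Then I choose $s=s(h)$ with $1-2s=1/|\log h|$, i.e.\ $s=\frac12-\frac1{2|\log h|}$, which is $<\frac12$ for $h$ small; since with this choice $h^{\frac12+s}=h\,h^{-1/(2|\log h|)}\le C h$, the regularization error is bounded by $C|\log h|^{3/2}h\normg{1}$ for $d=2$ and by $C|\log h|^{2}h\normg{2}$ for $d=3$.

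For the finite element error I would use Proposition~\ref{pr: err_h}, which bounds it by $Ch\|\bg_h\|_{\frac12,\Gamma}$, together with the $H^{1/2}(\Gamma)$-bound on $\bg_h$ valid under the quasi-uniformity hypothesis: Proposition~\ref{pr: est_g_h} gives $\|\bg_h\|_{\frac12,\Gamma}\le C|\log h|\,\normg{1}$ for $d=2$ and Proposition~\ref{pr: est_g_h3d} gives $\|\bg_h\|_{\frac12,\Gamma}\le C|\log h|\,\normg{2}$ for $d=3$. Hence the finite element error is at most $Ch|\log h|\,\normg{1}$ for $d=2$ and $Ch|\log h|\,\normg{2}$ for $d=3$.

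Adding the two contributions and using that for small $h$ a factor $|\log h|$ is absorbed by $|\log h|^{3/2}$ (respectively by $|\log h|^{2}$), one obtains the asserted estimates. I do not expect a genuine obstacle here: the theorem is essentially an assembly of the propositions proved in this section and in Section~\ref{se: apriori}. The only point requiring care is the bookkeeping of the $s$-dependent constants and the choice of $s$ balancing the blow-up $(1-2s)^{-p}$ against the gain $h^{s-\frac12}$, which is precisely the device already used for~\eqref{eq: estimate_t}.
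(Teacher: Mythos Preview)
Your proposal is correct and follows essentially the same approach as the paper: split via the triangle inequality, combine Proposition~\ref{pr: apriori} with Proposition~\ref{pr: g-g_h} (resp.\ \ref{pr: g-gh3d}) and optimize in $s$ for the regularization error, and combine Proposition~\ref{pr: err_h} with Proposition~\ref{pr: est_g_h} (resp.\ \ref{pr: est_g_h3d}) for the discretization error. The only cosmetic difference is the specific choice of $s$ (the paper takes $s=\tfrac12+1/\log h$, i.e.\ $1-2s=2/|\log h|$, while you take $1-2s=1/|\log h|$), which is immaterial.
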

\begin{proof}
From Propositions \ref{pr: apriori}, \ref{pr: g-g_h} and~\ref{pr: g-gh3d} we
have, for $0\le s<\frac12$,
\[
\|\bu-\bu(h)\|_{0,\Omega}+\|p-p(h)\|_{H^{-1}(\Omega)/\mathbb R}\le
\frac{C}{(1-2s)^{\frac{d+1}2}} h^{\frac12+s} \normg{d-1}.
\]
Then, taking $s=1/2+1/\log h<1/2$ yields
\begin{equation}
\label{eq: main1}
\|\bu-\bu(h)\|_{0,\Omega}+\|p-p(h)\|_{H^{-1}(\Omega)/\mathbb R}\le C\,h\,
|\log h|^\frac{d+1}2 \normg{d-1}.
\end{equation}
On the other hand, from Propositions \ref{pr: err_h}, \ref{pr: est_g_h}
and~\ref{pr: est_g_h3d}, we have
$$
\|\bu(h)-\bu_h\|_{0,\Omega}+\|p(h)-p_h\|_{H^{-1}(\Omega)/\mathbb R}\le
Ch\left|\log h\right|\normg{d-1}
$$
which together with \eqref{eq: main1} gives the desired estimates.
\end{proof}
\begin{remark}
In view of Remark \ref{remark sobre cota gh en 12}, the quasi-uniformity assumption
in the previous theorem can be removed obtaining, for general regular family
of meshes, the analogous estimates with $|\log h|$ replaced by $|\log(h_{min})|$.
\end{remark}

\section{A posteriori error estimates}
\label{se: aposteriori}
\setcounter{equation}{0}
In this section we introduce the error indicator for the finite element
solution of our problem and show that it provides upper and lower bounds for
the discretization error of the regularized problem.

We denote by $\E_h$ the union of the interior edges/faces of the
elements of the mesh $\T_h$, and define
\[
\bJ:\E_h\to\RE^d, \qquad \bJ|_e=\bJ_e
\quad \mbox{with } \bJ_e=\bjump{\frac{\partial\bu_h}{\partial\bn}-p_h\bn}_e
\text{ for }e\in\E_h
\]
where the jump of the function $r$ across the edge $e=T^+\cap T^-$ is given by
\[
\bjump{\frac{\partial\bu_h}{\partial\bn}-p_h\bn}_e=\left(\frac{\partial\bu_h|_{T^+}}{\partial\bn^+}-p_h|_{T^+}\bn^+\right)+\left(\frac{\partial\bu_h|_{T^-}}{\partial\bn^-}-p_h|_{T^-}\bn^-\right)
\]
if $\bn^\pm$ denotes the exterior normal to the triangle $T^\pm$.

Then we introduce the local error indicator
\begin{equation}
\label{estim}
\eta_T^2=h_T^4\|-\Delta\bu_h+\nabla p_h\|_{0,T}^2+h_T^2\|\diver\bu_h\|_{0,T}^2
+\frac12\sum_{e\subset \overline T}h_T^3\|\bJ_e\|_{0,e}^2.
\end{equation}
Since we want to estimate the velocity in the $L^2(\Omega)$-norm and the pressure in
the $H^{-1}(\Omega)/\RE$-norm, the error indicator results to be the
usual error indicator for problems with smooth boundary data multiplied
by $h_T^2$ (see, e.g., \cite{V1989,F}).
\begin{proposition}[Robustness]
\label{pr: robust}
The estimator $\eta_T$ introduced in \eqref{estim} is robust, that is, there
exists a positive constant $C$ independent of $h$ such that
\begin{equation}
\label{eq: rosbust}
\|\bu(h)-\bu_h\|_{0,\Omega}+\|p(h)-p_h\|_{H^{-1}(\Omega)/\mathbb R}
\le C \left(\sum_{T\in\T_h}\eta_T^2\right)^\frac12.
\end{equation}
\end{proposition}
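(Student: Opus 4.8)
The plan is to establish the upper bound \eqref{eq: rosbust} by a standard residual-based duality argument, adapted to the fact that we measure the velocity error in the $\bL^2(\Omega)$-norm and the pressure error in $H^{-1}(\Omega)/\RE$ rather than in the energy norms. First I would set $\bV:=\bu(h)-\bu_h$ and $P:=p(h)-p_h$, and introduce the dual Stokes problem: find $(\bphi,\xi)\in(\bH^2(\Omega)\cap\bH^1_0(\Omega))\times(H^1(\Omega)\cap L^2_0(\Omega))$ with $-\Delta\bphi+\nabla\xi=\bV$, $\diver\bphi=0$ in $\Omega$, $\bphi=0$ on $\Gamma$; by Proposition~\ref{pr: reg R3} this has a solution with $\|\bphi\|_{2,\Omega}+\|\xi\|_{1,\Omega}\le C\|\bV\|_{0,\Omega}$. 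Testing the continuous weak equations \eqref{eq: weak} against $(\bphi,\xi)$ and integrating by parts gives $\|\bV\|_{0,\Omega}^2 = (\nabla\bu(h),\nabla\bphi) - (\diver\bphi,p(h)) - (\diver\bu(h),\xi)$, and since $\bphi=0$ on $\Gamma$ and $\diver\bphi=0$, the true residuals vanish and we may subtract the Galerkin relations \eqref{eq: pd} tested against the interpolants $\bphi^I\in\bV_h$ and $P_0\xi\in Q_h$. This produces the Galerkin-orthogonality identity $\|\bV\|_{0,\Omega}^2 = (\nabla(\bu(h)-\bu_h),\nabla(\bphi-\bphi^I)) - (\diver(\bphi-\bphi^I),p(h)-p_h) - (\diver(\bu(h)-\bu_h),\xi-P_0\xi)$.

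The second step is to integrate by parts element by element. Writing the first two terms as a sum over $T\in\T_h$ and integrating by parts on each element, the interior contributions assemble into $\sum_T(-\Delta\bu_h+\nabla p_h,\bphi-\bphi^I)_T$ (note $-\Delta\bu(h)+\nabla p(h)=0$), and the boundary contributions assemble, using that $\bphi-\bphi^I$ is single-valued across interior edges and vanishes on $\Gamma$, into $\frac12\sum_{e\in\E_h}(\bJ_e,\bphi-\bphi^I)_e$. Likewise the divergence term becomes $-\sum_T(\diver\bu_h,\xi-P_0\xi)_T$ since $\diver\bu(h)=0$. Thus $\|\bV\|_{0,\Omega}^2$ is bounded by a sum over elements and edges of residual terms paired against $\bphi-\bphi^I$ and $\xi-P_0\xi$. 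Applying Cauchy--Schwarz on each piece together with the interpolation estimates $\|\bphi-\bphi^I\|_{0,T}\le Ch_T^2|\bphi|_{2,\omega_T}$, $\|\bphi-\bphi^I\|_{0,e}\le Ch_e^{3/2}|\bphi|_{2,\omega_e}$ (a trace-scaled version of \eqref{eq: interp}), and $\|\xi-P_0\xi\|_{0,T}\le Ch_T|\xi|_{1,\omega_T}$ yields
\[
\|\bV\|_{0,\Omega}^2 \le C\Big(\sum_{T\in\T_h}\eta_T^2\Big)^{1/2}\big(|\bphi|_{2,\Omega}+|\xi|_{1,\Omega}\big)
\le C\Big(\sum_{T\in\T_h}\eta_T^2\Big)^{1/2}\|\bV\|_{0,\Omega},
\]
which gives the desired bound for $\|\bu(h)-\bu_h\|_{0,\Omega}$ after dividing by $\|\bV\|_{0,\Omega}$.

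For the pressure contribution I would argue as in Proposition~\ref{pr: err_h}: since $P\in L^2_0(\Omega)$, write $\|P\|_{H^{-1}(\Omega)/\RE}=\sup\{(P,q)/\|q\|_{1,\Omega}: q\in H^1_0(\Omega),\ \int_\Omega q=0\}$, pick $\bpsi\in\bH^2_0(\Omega)$ with $\diver\bpsi=q$ and $\|\bpsi\|_{2,\Omega}\le C\|q\|_{1,\Omega}$ via \eqref{aux2}, and run the same element-wise integration by parts on $(P,\diver\bpsi)$ after subtracting the Galerkin relation tested against $\bpsi^I$. This again produces a pairing of the same interior residuals $-\Delta\bu_h+\nabla p_h$ and jumps $\bJ_e$ against $\bpsi-\bpsi^I$, plus a term $(\diver\bu_h,\cdot)$; the interpolation estimates then give $(P,q)\le C(\sum_T\eta_T^2)^{1/2}\|q\|_{1,\Omega}$, and hence the stated bound on $\|p(h)-p_h\|_{H^{-1}(\Omega)/\RE}$. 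Adding the two contributions gives \eqref{eq: rosbust}.

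The main obstacle — really the only nonroutine point — is the bookkeeping in the element-wise integration by parts: one must verify that all the jump contributions across interior edges combine correctly into $\frac12\sum_e(\bJ_e,\cdot)_e$ with the right sign and the factor $\tfrac12$ matching the one in \eqref{estim}, and that the boundary-of-$\Omega$ terms genuinely vanish because both $\bphi^I$ (resp. $\bpsi^I$) and the dual solution have zero trace there. Everything else is Cauchy--Schwarz plus the standard interpolation and $L^2$-projection estimates already recorded in the excerpt, together with the elliptic regularity of Proposition~\ref{pr: reg R3} and the right-inverse of the divergence from \eqref{aux2}; the extra powers $h_T^2$ in $\eta_T$ relative to the classical Stokes estimator are exactly what is needed to compensate for measuring the error in the weaker norms $\bL^2(\Omega)$ and $H^{-1}(\Omega)/\RE$.
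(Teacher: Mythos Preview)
Your approach is correct and matches the paper's proof: duality via Proposition~\ref{pr: reg R3} for the velocity, the divergence right-inverse \eqref{aux2} for the pressure, Galerkin orthogonality against $\bphi^I$, $P_0\xi$, $\bpsi^I$, and element-wise integration by parts to expose the three residual pieces in $\eta_T$. One small imprecision in your pressure step: the extra term that survives is not $(\diver\bu_h,\cdot)$ but $-(\bu(h)-\bu_h,\Delta\bpsi)$, exactly as in \eqref{eq: cinco}; this is bounded by $\|\bu(h)-\bu_h\|_{0,\Omega}\|\bpsi\|_{2,\Omega}\le C\|\bu(h)-\bu_h\|_{0,\Omega}\|q\|_{1,\Omega}$ and then absorbed using the velocity estimate you have just proved --- which is why the velocity bound must be established first.
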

\begin{proof}
We start with the estimate for $\bu(h)-\bu_h$. In order to apply a duality
argument, we consider the solution $(\bphi,q)$ of \eqref{aux3}.
Then, taking into account the equations~\eqref{cuatro} and~\eqref{eq: weak}, and
the approximation estimates~\eqref{eq: interp} and~\eqref{eq: pd},
we obtain by integration by parts:
\begin{equation}
\label{seis}
\aligned
\|\bu(h)-\bu_h\|_{0,\Omega}^2 &= \left(\bu(h)-\bu_h,\bu(h)-\bu_h\right)
= \left(\bu(h)-\bu_h,-\Delta\bphi+\nabla q\right)\\
&=\left(\nabla(\bu(h)-\bu_h),\nabla(\bphi-\bphi^I)\right)\\
&\qquad-\left(\diver(\bu(h)-\bu_h),q-P_0q\right)
-\left(p(h)-p_h,\diver(\bphi-\bphi^I\right)\\
& =\sum_{T\in\T_h}\left(\left(\Delta\bu_h,\bphi-\bphi^I\right)_T
-\left(\frac{\partial\bu_h}{\partial\bn},\bphi-\bphi^I\right)_{\partial T}
\right)\\
&\qquad +(\diver\bu_h,q-P_0q)\\
&\qquad
-\sum_{T\in\T_h}\left(\left(\nabla p_h,\bphi-\bphi^I\right)_T
-\left(p_h,(\bphi-\bphi^I)\cdot\bn\right)_{\partial T}\right)\\
&=-\sum_{T\in\T_h}\left(-\Delta\bu_h+\nabla p_h,\bphi-\bphi^I\right)_T
+\sum_{T\in\T_h}\left(\diver\bu_h,q-P_0q\right)_T\\
&\qquad-\sum_{e\in\E_h}\left(
\bjump{\frac{\partial\bu_h}{\partial\bn}-p_h\bn},\bphi-\bphi^I\right)_e.
\endaligned
\end{equation}
Thanks to~\eqref{eq: interp}, we can write
\begin{equation}
\label{eq: rob1}
\aligned
\|\bu(h)-\bu_h\|_{0,\Omega}^2&\le C
\sum_{T\in\T_h}\left\|-\Delta\bu_h+\nabla p_h\right\|_{0,T}h_T^2|\bphi|_{2,T}\\
&\quad
+C\sum_{T\in\T_h}\|\diver\bu_h\|_{0,T}h_T|q|_{1,T}
+C\sum_{e\in\E_h}
\left\|\bJ\right\|_{0,e}h_T^\frac32\left|\bphi\right|_{2,\omega_e}
\\
&\le C\left[\sum_{T\in\T_h}\left(h_T^4\|-\Delta\bu_h+\nabla p_h\|_{0,T}^2
+h_T^2\|\diver\bu_h\|_{0,T}^2\right)\right.\\
&\quad\left.
+\sum_{e\in\E_h}h_T^3\|\bJ\|_{0,e}^2\right]^\frac12\|\bu(h)-\bu_h\|_{0,\Omega}\\
& \le C\left(\sum_{T\in\T_h} \eta_T^2\right)^{\frac12}\|\bu(h)-\bu_h\|_{0,\Omega}
\endaligned
\end{equation}
where $\omega_e$ is the union ot the elements sharing $e\in\E_h$.
This concludes the estimate of $\|\bu(h)-\bu_h\|_{0,\Omega}$.

Now we consider the error for the pressure. Since $p(h)$ and $p_h$ have zero mean
value, the definition of the $H^{-1}$-norm reads
\begin{equation}
\label{eq: defnorm-1}
\|p(h)-p_h\|_{H^{-1}(\Omega)/\mathbb R} =
\sup_{\substack{q\in H^1_0(\Omega)\\\int_\Omega q=0}}
\frac{\left(p(h)-p_h,q\right)}{\|q\|_{1,\Omega}}.
\end{equation}
For each $q\in H^1_0(\Omega)$ with $\int_\Omega q=0$, we take
$\bpsi\in H^2_0(\Omega)$ with $\diver\psi=q$ and $\|\bpsi\|_{H^2(\Omega)}\le
C\|q\|_{H^1(\Omega)}$ see~\eqref{aux2}, hence
\[
\aligned
\left(p(h)-p_h,q\right)&=\left(p(h)-p_h,\diver\bpsi\right)\\
&=\left(p(h)-p_h,\diver(\bpsi-\bpsi^I)\right)
-\left(\nabla(\bu(h)-\bu_h),\nabla(\bpsi-\bpsi^I)\right)\\
&\qquad-\left(\bu(h)-\bu_h,\Delta\bpsi \right).
\endaligned
\]
By the same computations performed in equation~\eqref{seis}, we obtain
\[
\aligned
\left(p(h)-p_h,q\right) &=\left(\Delta\bu(h)-\nabla p(h),\bpsi-\bpsi^I\right) \\
&\qquad +\sum_{T\in\mathcal T_h}\left(-\left(\Delta\bu_h,\bpsi-\bpsi^I\right)_T
+\left(\frac{\partial\bu_h}{\partial\bn},\bpsi-\bpsi^I\right)_T\right)\\
&\qquad +\sum_{T\in\T_h}\left(\left(\nabla p_h,\bpsi-\bpsi^I\right)_T-
\left(p_h,(\bpsi-\bpsi^I)\cdot\bn\right)_{\partial T}\right)\\
&\qquad -\left(\bu(h)-\bu_h,\Delta\bpsi\right)\\
&=\sum_{T\in\T_h}\left(-\Delta\bu_h+\nabla p_h,\bpsi-\bpsi^I\right)_T\\
&\qquad +\sum_{e\in\E_h}\left(\bjump{\frac{\partial\bu_h}{\partial\bn}-p_h\bn},
\bpsi-\bpsi^I\right)_e -\left(\bu(h)-\bu_h,\Delta\bpsi\right).
\endaligned
\]
Then
\[
\aligned
\left(p(h)-p_h,q\right) &\le
C\bigg[\sum_{T\in\T_h} h_T^4\|-\Delta\bu_h+\nabla p_h\|_{0,T}^2+
\sum_{e\in\E_h}h_T^3\|\bJ\|_{0,e}^2\bigg]^\frac12\|q\|_{1,\Omega}\\
&\qquad + \|\bu(h)-\bu_h\|_{0,\Omega}\|q\|_{1,\Omega}.
\endaligned
\]
The proof concludes by using the estimate \eqref{eq: rob1} and the norm
definition~\eqref{eq: defnorm-1}.
\end{proof}

In the next proposition we show that the error indicator bounds locally
the error by below.

\begin{proposition}[Efficiency]
    \label{pr: efi}
    For all element $T\in\T_h$, we have
    \begin{equation}
    \label{efi}
    \eta_T\le C\left(\|\bu(h)-\bu_h\|_{0,\omega_T}
    + \|p(h)-p_h\|_{-1,\omega_T}\right)
    \end{equation}
    where $\omega_T=\left\{T'\in\mathcal T_h: \overline{T'}\cap \overline{T}\ne\emptyset\right\}$.
\end{proposition}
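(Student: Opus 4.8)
The plan is to use the standard bubble-function technique of Verf\"urth, adapted to the weaker norms in which our estimator is measured. The essential observation is that, by the error equations \eqref{cuatro}, the residual $-\Delta\bu_h+\nabla p_h$ (which on each $T$ is a polynomial, since $\bu_h$ and $p_h$ are piecewise polynomials), the element divergence $\diver\bu_h$, and the jump $\bJ_e$, are precisely the quantities tested against $\bphi\in\bH^1_0(\Omega)$ and $q\in L^2_0(\Omega)$ in the weak formulation of the \emph{residual equation} satisfied by $(\bu(h)-\bu_h,p(h)-p_h)$. The factors $h_T^2$, $h_T$ and $h_T^{3/2}$ in \eqref{estim} are exactly what is needed so that each term can be bounded by $\|\bu(h)-\bu_h\|_{0,\omega_T}+\|p(h)-p_h\|_{-1,\omega_T}$ rather than by the $H^1$-type norms used in the classical smooth-data case.

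First I would treat the interior residual term $h_T^2\|-\Delta\bu_h+\nabla p_h\|_{0,T}$. Let $\bb_T$ be the standard interior bubble on $T$ (a polynomial vanishing on $\partial T$), and set $\bw_T=\bb_T\,(-\Delta\bu_h+\nabla p_h)$ extended by zero to $\Omega$, so $\bw_T\in\bH^1_0(\Omega)$; in fact, since $-\Delta\bu_h+\nabla p_h$ is polynomial on $T$, norm-equivalence on the finite-dimensional space gives $\|-\Delta\bu_h+\nabla p_h\|_{0,T}^2\le C(-\Delta\bu_h+\nabla p_h,\bw_T)_T$. Integrating by parts and using the first equation of \eqref{cuatro} together with the first equation of \eqref{eq: weak} (so that $(-\Delta(\bu(h)-\bu_h)+\nabla(p(h)-p_h),\bw_T)=0$ in the distributional sense on $\Omega$), one rewrites $(-\Delta\bu_h+\nabla p_h,\bw_T)_T = (\nabla(\bu(h)-\bu_h),\nabla\bw_T)_T - (\diver\bw_T,p(h)-p_h)_T$. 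Now the crucial point: by definition of the $H^{-1}$-norm, $(\diver\bw_T,p(h)-p_h)_T\le \|p(h)-p_h\|_{-1,T}\|\diver\bw_T\|_{1,T}$, while the standard inverse/scaling estimates for bubble functions give $\|\nabla\bw_T\|_{0,T}\le Ch_T^{-1}\|-\Delta\bu_h+\nabla p_h\|_{0,T}$ and $\|\diver\bw_T\|_{1,T}\le Ch_T^{-2}\|-\Delta\bu_h+\nabla p_h\|_{0,T}$ (one extra inverse power because of the $H^1$-norm on $\diver\bw_T$). Combining and dividing through yields $h_T^2\|-\Delta\bu_h+\nabla p_h\|_{0,T}\le C(h_T^{-1+2}\|\nabla(\bu(h)-\bu_h)\|_{0,T}\cdot h_T^{-1}?\ldots)$ --- here I must track powers carefully; the correct bookkeeping is that $h_T^2\|-\Delta\bu_h+\nabla p_h\|_{0,T}\le C(\|\bu(h)-\bu_h\|_{0,T}+\|p(h)-p_h\|_{-1,T})$ after also invoking an inverse inequality $\|\nabla(\bu(h)-\bu_h)\|_{0,T}\le Ch_T^{-1}\|\bu(h)-\bu_h\|_{0,T}$, which is legitimate because $\bu(h)-\bu_h$ restricted to $T$ equals $\bu(h)-\bu_h$ and $\bu_h$ is a polynomial while $\bu(h)$ is not --- so instead one keeps $\|\nabla\bu(h)\|$ terms and absorbs them, or, more cleanly, one does not apply the inverse inequality to $\bu(h)$ but rather estimates $\|\nabla(\bu(h)-\bu_h)\|_{0,T}$ directly by noting that this local efficiency bound is usually proven with the $H^1$-seminorm of the error on the right and then one appeals to the a priori smoothness; I would follow the formulation in \cite{V1989,F}, which already accounts for these weaker norms, and simply verify the scaling. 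The divergence term $h_T\|\diver\bu_h\|_{0,T}$ is handled identically using the bubble $b_T\diver\bu_h$ tested in the second equation of \eqref{cuatro}, giving $h_T\|\diver\bu_h\|_{0,T}\le C\|\bu(h)-\bu_h\|_{0,T}$ after one inverse power.

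Next I would treat the jump term $h_T^{3/2}\|\bJ_e\|_{0,e}$ for an interior edge/face $e$ shared by $T^+,T^-$. Using the edge bubble $\psi_e$ and its standard polynomial extension, set $\bw_e = \psi_e\,\mathcal{E}(\bJ_e)$ supported on $\omega_e=T^+\cup T^-$; norm-equivalence on polynomials gives $\|\bJ_e\|_{0,e}^2\le C(\bJ_e,\bw_e)_e$. Integrating by parts on each of $T^\pm$ and using the error equations as before, one obtains $(\bJ_e,\bw_e)_e = -(\nabla(\bu(h)-\bu_h),\nabla\bw_e)_{\omega_e} + (\diver\bw_e,p(h)-p_h)_{\omega_e} + (-\Delta\bu_h+\nabla p_h,\bw_e)_{\omega_e}$. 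The last (element-residual) contribution is already controlled by the first part of the proof; the first two are bounded by $\|\bu(h)-\bu_h\|_{0,\omega_e}$ and $\|p(h)-p_h\|_{-1,\omega_e}$ after using the bubble scaling $\|\bw_e\|_{0,\omega_e}\le Ch_e^{1/2}\|\bJ_e\|_{0,e}$, $\|\nabla\bw_e\|_{0,\omega_e}\le Ch_e^{-1/2}\|\bJ_e\|_{0,e}$, $\|\diver\bw_e\|_{1,\omega_e}\le Ch_e^{-3/2}\|\bJ_e\|_{0,e}$ together with shape-regularity ($h_T\sim h_e\sim h_{T'}$ on $\omega_e$) and the appropriate inverse inequality on the (polynomial part of the) error. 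Collecting the three estimates and squaring gives $\eta_T^2\le C(\|\bu(h)-\bu_h\|_{0,\omega_T}^2+\|p(h)-p_h\|_{-1,\omega_T}^2)$, which is \eqref{efi}. The main obstacle, and the only genuinely delicate point, is the power-counting: because the error is measured in $L^2$ (velocity) and $H^{-1}$ (pressure) rather than $H^1$ and $L^2$, every term needs one or two extra inverse powers of $h_T$ compared with the classical case, and these must land exactly so that the weights $h_T^4$, $h_T^2$, $h_T^3$ in \eqref{estim} are matched; keeping the negative-norm duality $(\diver\bw,p(h)-p_h)\le\|p(h)-p_h\|_{-1}\|\diver\bw\|_1$ rather than an $L^2$ pairing is what forces the $H^1$-norm on the bubble functions and hence the precise exponents.
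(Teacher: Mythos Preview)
Your overall strategy is right, but there is a genuine gap in the treatment of the velocity term, and you have in fact already put your finger on it: you cannot apply an inverse inequality to $\nabla(\bu(h)-\bu_h)$, because $\bu(h)$ is not piecewise polynomial. Your fallback (``follow \cite{V1989,F} and verify the scaling'') does not work either, since those references bound the estimator by the $H^1\times L^2$ error, not by $L^2\times H^{-1}$; the extra factor of $h_T$ you need does \emph{not} come from scaling of the classical argument.

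The missing idea is to use a \emph{squared} bubble. Take $b_T=\bigl(\prod_{i=1}^{d+1}\lambda_{i,T}\bigr)^2$ instead of the standard bubble, and set $\bw_T=b_T(-\Delta\bu_h+\nabla p_h)$. Then both $\bw_T$ and $\nabla\bw_T$ vanish on $\partial T$, so you may integrate by parts \emph{twice}:
\[
(\nabla(\bu(h)-\bu_h),\nabla\bw_T)_T=-(\bu(h)-\bu_h,\Delta\bw_T)_T,
\]
with no boundary term. This lands directly on $\|\bu(h)-\bu_h\|_{0,T}\|\Delta\bw_T\|_{0,T}$, and the inverse estimate $\|\Delta\bw_T\|_{0,T}\le Ch_T^{-2}\|-\Delta\bu_h+\nabla p_h\|_{0,T}$ gives exactly the weight $h_T^2$ you want. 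The squared bubble is equally essential for the pressure term: your duality step $(p(h)-p_h,\diver\bw_T)_T\le\|p(h)-p_h\|_{-1,T}\|\diver\bw_T\|_{1,T}$ requires $\diver\bw_T\in H^1_0(T)$, i.e.\ $\diver\bw_T=0$ on $\partial T$, which fails for the ordinary bubble but holds for the squared one. The same device (a squared edge bubble $b_e$) handles the jump term: the second integration by parts on $\omega_e$ produces a boundary term $(\bu_h-\bu(h),\partial\bw_e/\partial\bn)_{\partial\omega_e}$ which vanishes because $\nabla b_e=0$ there, and again $\diver\bw_e\in H^1_0(\omega_e)$. Your treatment of $h_T\|\diver\bu_h\|_{0,T}$ is fine as written, since there a single integration by parts against the ordinary bubble already yields the $L^2$-norm of the velocity error.
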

\begin{proof}
    We estimate the three terms of the error indicator in~\eqref{estim},
    separately.
    Given an element $T\in\T_h$, let us consider the function
    \[
        b_T=\left(\prod_{i=1}^{d+1}\lambda_{i,T}\right)^2
    \]
    with $\lambda_{i,T}, i=1,\ldots,d+1$ being the
    barycenter coordinate functions in $T$. We set
    \[
    \bw_T=\left(-\Delta\bu_h+\nabla p_h\right)b_T.
    \]
    Thanks to the definition of $b_T$ we have that
    \[
    \bw_T=0 \quad\mbox{on }\partial T, \qquad
    \nabla \bw_T = 0\quad\mbox{on }\partial T,
    \]
    and, by inverse inequality,
    \begin{equation}
    \label{eq: stimawt}
    \aligned
    \|\diver\bw_T\|_{1,T}&
    \le Ch_T^{-2}\left\|-\Delta\bu_h+\nabla p_h\right\|_{0,T}\\
    \|\Delta\bw_T\|_{0,T}&\le
    Ch_T^{-2}\left\|-\Delta\bu_h+\nabla p_h\right\|_{0,T},
    \endaligned
    \end{equation}
    Then integration by parts gives
    \begin{equation}
    \label{eq: effuno}
    \aligned
    \|-\Delta\bu_h+\nabla p_h\|_{0,T}^2&=
    \left(-\Delta\bu_h+\nabla p_h,-\Delta\bu_h+\nabla p_h\right))_T\\
    &\le C\left|\left(-\Delta\bu_h+\nabla p_h,\bw_T\right)_T\right|\\
    &=C\left|\left(-\Delta(\bu_h-\bu(h))+\nabla(p_h-p(h)),\bw_T\right)\right|\\
    &=C\left|\left(\bu_h-\bu(h),\Delta\bw_T\right)_T
    +\left(p_h-p(h),\diver\bw_T\right)\right|.
    \endaligned
    \end{equation}
    Due to the definition of $b_T$ we have that $\diver\bw_T\in\bH^1_0(T)$, hence we can use the duality between $H^{-1}(T)$ and $H^1_0(T)$ to obtain
    \[
    \|-\Delta\bu_h+\nabla p_h\|_{0,T}^2
    \le C\left(\|\bu_h-\bu(h)\|_{0,T}\|\Delta\bw_T\|_{0,T}
    +\|\left(p_h-p(h)\right)\|_{-1,T}\|\diver\bw_T\|_{1,T}\right)
    \]
    which, together with \eqref{eq: stimawt}, implies
    \begin{equation}
    \label{ef1}
    h_T^2\left\|-\Delta\bu_h+\nabla p_h\right\|_{0,T}\le
    C\left(\|\bu_h-\bu(h)\|_{0,T} + \|p_h-p(h)\|_{-1,T}\right).
    \end{equation}

    In order to bound the second term in~\eqref{estim}, let us introduce
    $w_T=(\diver\bu_h)b_T$, which satisfies
    \[
    \|\nabla w_T\|_{0,T}\le Ch_T^{-1}\|\diver\bu_h\|_{0,T}.
    \]
    Hence we obtain
    \[
    \aligned
    \int_T\left(\diver\bu_h\right)^2 &
\le C\left|\int_T\left(\diver\bu_h\right)w_T\right|
    = C\left|\int_T\diver\left(\bu_h-\bu(h)\right)w_T\right|\\
    &=C\left|\int_T\left(\bu_h-\bu(h)\right)\nabla w_T\right|
    \le Ch_T^{-1}\|\bu_h-\bu(h)\|_{0,T}\|\diver\bu_h\|_{0,T}
    \endaligned
    \]
    which implies
    \begin{equation}
    \label{ef2}
    h_T\|\diver\bu_h\|_{0,T}\le C\|\bu_h-\bu(h)\|_{0,T}.
    \end{equation}
It remains to bound the last term of the indicator involving the jumps along
element interfaces in $\T_h$. Let $e\in\E_h$ be an internal edge/face
and let us suppose that there are two elements $T_1$ and $T_2$ such that
$e=T_1\cap T_2$. Let $\tt v_i$ for $i=1,\dots,d$, be the vertices of $e$. We
denote by $\lambda_{\tt v_i,T_j}$, $i=1,\dots,d$, $j=1,2$, the barycentric
coordinate functions for the vertex $\tt v_i$ on the triangle $T_j$ and by
$\omega_e$ the union of $T_1$ and $T_2$. Then we define the bubble function
    \[
    b_e=\left(\prod_{i=1}^d\lambda_{{\tt v}_i,T_1}
    \prod_{i=1}^d\lambda_{{\tt v}_i,T_2}\right)^2.
    \]
    Setting $\bw_e=\bJ_eb_e$ and taking into account that the mesh is regular,
    it is not difficult to check that the following inequalities hold true:
    \begin{equation}
    \label{eq: stimaw}
    \aligned
    &\|\Delta\bw_e\|_{0,\omega_e}\le Ch_e^{-\frac32}\|\bJ_e\|_{0,e}\\
    &\|\diver\bw_e\|_{1,\omega_e}\le Ch_e^{-\frac32}\|\bJ_e\|_{0,e}\\
    &\|\bw_e\|_{0,\omega_e}\le Ch_e^\frac12\|\bJ_e\|_{0,e}.
    \endaligned
    \end{equation}
    There exists a positive constant $C$ such that
    \[
    \aligned
    \frac1C\|\bJ_e\|_{0,e}^2&\le (\bJ_e^2,b_e)_e
    =\left(\bjump{\frac{\partial\bu_h}{\partial\bn}-p_h\bn},\bw_e\right)_e\\
    &=\left(\nabla\bu_h,\nabla \bw_e\right)_{\omega_e} +
    \sum_{T\subset\omega_e}(\Delta\bu_h,\bw_e)_T
    -\sum_{T\subset\omega_e}(\nabla p_h,\bw_e)_T
    - (p_h,\diver\bw_e)_{\omega_e}\\
    &= (\nabla\bu_h,\nabla \bw_e)_{\omega_e} - (p_h,\diver\bw_e)_{\omega_e}
    + \sum_{T\subset\omega_e}\left(\Delta\bu_h-\nabla p_h,\bw_e\right)_T\\
    &=\left(\nabla\bu_h-\nabla\bu(h),\nabla\bw_e\right)_{\omega_e}
    -\left(p_h-p(h),\diver\bw_e\right)_{\omega_e}\\
    &\qquad+\sum_{T\subset\omega_e}\left(\Delta\bu_h-\nabla p_h,\bw_e\right)_T\\
    &=-\left(\bu_h-\bu(h),\Delta\bw_e\right)_{\omega_e}
    +\left(\bu_h-\bu(h),\frac{\partial\bw_e}{\partial\bn}\right)_{\partial\omega_e}\\
    &\qquad -\left(p_h-p(h),\diver\bw_e\right)_{\omega_e}
    +\sum_{T\subset\omega_e}\left(\Delta\bu_h-\nabla p_h,\bw_e\right)_T.
    \endaligned
    \]
    Using again the fact that $\diver\bw_e\in H^1_0(\omega_e)$, we obtain, by
multiplying times $h_e^3$
    \[
    \aligned
    h_e^3\|\bJ_e\|_{0,e}^2&\le
    C\Big( \|\bu_h-\bu(h)\|_{0,\omega_e}h_e^3\|\Delta\bw_e\|_{0,\omega_e}
    + \|p_h-p(h)\|_{-1,\omega_e}h_e^3\|\diver\bw_e\|_{1,\omega_e} \\
    &\qquad+ \sum_{T\subset\omega_e}h_e^2\|\Delta\bu_h-\nabla p_h\|_{0,T}
    h_e\|\bw_e\|_{0,T}\Big).
    \endaligned
    \]
    Using~\eqref{eq: stimaw} and~\eqref{ef1}, we have
    \[
        \aligned
        h_e^3\|\bJ_e\|_{0,e}^2&\le
        C\Big( \|\bu_h-\bu(h)\|_{0,\omega_e}
        + \|p_h-p(h)\|_{-1,\omega_e} \\
        &\qquad+ \sum_{T\subset\omega_e}h_e^2\|\Delta\bu_h-\nabla p_h\|_{0,T}
        \Big)h_e^\frac32\|\bJ\|_{0,e}\\
        &\le \left(\|\bu_h-\bu(h)\|_{0,\omega_e}
        + \|p_h-p(h)\|_{-1,\omega_e}\right) h_e^\frac32\|\bJ\|_{0,e},
        \endaligned
    \]
    that is,
    \begin{equation}
    \label{ef3}
    h_e^\frac32\|\bJ_e\|_{0,e}\le
    C\left(\|\bu_h-\bu(h)\|_{0,\omega_e} + \|p_h-p(h)\|_{-1,\omega_e}\right).
    \end{equation}
    Taking into account the definition \eqref{estim} of the estimator $\eta_T$, together with the estimates \eqref{ef1}, \eqref{ef2} and \eqref{ef3} we obtain the desired result.
\end{proof}

\section{Numerical Example}
\label{se: examples}
\setcounter{equation}{0}
On $\Omega=[0,1]^2$, we consider the lid-driven cavity flow problem
\begin{eqnarray*}
-\Delta{\bu}+\nabla p &=& 0\qquad \mbox{in }\Omega\\
\mbox{div\,}\bu&=& 0\qquad\mbox{on }\Omega\\
\bu&=&{\bf g}\qquad \mbox{in }\partial\Omega
\end{eqnarray*}
with
\[
{\bf g}(x_1,x_2) = \left\{\begin{array}{cl}(1,0)&\mbox{if }0<x_1<1\mbox{ and }x_2=1\\(0,0)&\mbox{if }x_1=0 \mbox{ or }x_1=1\mbox{ or }x_2=0.\end{array}\right.
\]
We consider the methods:
\begin{itemize}
\item Mini-element: ${\bf V}_h=(\mathcal P_1^b(\mathcal T_h))^2\cap \mathcal C^0(\bar \Omega)^2$ and $Q_h=\mathcal P_1(\mathcal T_h)\cap \mathcal C^0(\bar \Omega)\cap L^2_0(\Omega)$
\item Hood and Taylor: ${\bf V}_h=\mathcal P_2(\mathcal T_h)^2\cap \mathcal C^0(\bar \Omega)^2$ and $Q_h=\mathcal P_1(\mathcal T_h)\cap \mathcal C^0(\bar \Omega)\cap L^2_0(\Omega)$
\end{itemize}
where, if for an element $T$, $b_T\in\mathcal P_3$ is the cubic bubble function vanishing on $\partial T$, we set
\[
\mathcal P_1^b(T) = \mathcal P_1(T)\oplus \mbox{span}\left\{b_T(\cdot)\right\}.
\]
We consider the variational formulation \eqref{eq: pd} with
$\bu_h=E\bg_h+\bu_{0h}$ where $\bu_{0h}\in{\bf V}_h$ and $\bg_h$ is the
Lagrange interpolation of $\bg$ on the restriction of $\mathcal T_h$ to
$\partial\Omega$. We remark that the compatibility condition \eqref{eq:
compatibility} is automatically verified.

Below for the distinct methods and different refinement strategies we estimate
the convergence errors for $\bu$ in $L^2(\Omega)$-norm.
Since we do not know the exact solution, the
$L^2(\Omega)$-error is computed as the difference between the solutions
obtained at two consecutive refinements.

\begin{table}[ht]
\centering
\begin{tabular}{|c|c|c|c|c|}
\hline
nv & $L^2$ error in $\bu$ & $\eta$ & order in $\bu$ &  order in $\eta$\\\hline
$289  $&  $0.051393  $&  $1.8518  $&  &   \\\hline
$1089  $&  $0.025876  $&  $0.93123  $&  $0.51724  $&  $0.51818$\\\hline
$4225  $&  $0.012952  $&  $0.46698  $&  $0.51049  $&  $0.5091$\\\hline
$16641  $&  $0.0064768  $&  $0.23386  $&  $0.50553  $&  $0.50449$\\\hline
$66049  $&  $0.0032384  $&  $0.11703  $&  $0.50281  $&  $0.5022$\\\hline
\end{tabular}
\caption{Mini-element on uniformly refined structured meshes.}\label{tabla1}
\end{table}

\begin{table}[ht]
\centering
\begin{tabular}{|c|c|c|c|c|}
\hline
nv & $L^2$ error in $\bu$ & $\eta$ & order in $\bu$ & order in $\eta$\\\hline
$289  $&  $0.04065  $&   $3.603  $&   &  \\\hline
$1089  $&  $0.020324  $&    $1.8041  $&  $0.52253  $&    $0.52142$\\\hline
$4225  $&  $0.010162  $&  $0.90276  $&  $0.51127  $&    $0.51068$\\\hline
$16641  $&  $0.0050809  $&    $0.45158  $&  $0.50563  $&   $0.50532$\\\hline
\end{tabular}
\caption{Hood-Taylor on uniformly refined structured meshes.}\label{tabla2}
\end{table}

Tables \ref{tabla1} and \ref{tabla2} show results obtained by uniform
refinements starting with a coarse mesh for Mini-element and Hood-Taylor
methods respectively. We observe that, in both cases, order $\frac12$ with
respect to the number of elements (order $1$ in $h$) is obtained for the error
decay in $L^2(\Omega)$ of $\bu$. Accordingly, the error estimator $\eta$
defined by
\begin{equation}\label{eq: total_estim}
\eta^2=\sum_{T\in\mathcal T_h}\eta_T^2,
\end{equation}
with $\eta_T$ given by \eqref{estim}, decreases with the same order.

\begin{table}[ht]
\centering
\begin{tabular}{|c|c|c|c|c|}
\hline
nv & $L^2$ error in $\bu$ & $\eta$ & order in $\bu$ & order in $\eta$\\\hline
$84  $&  $0.04161  $&  $2.4824  $&  &   \\\hline
$99  $&  $0.036736  $&  $1.5566  $&  $0.75833  $&  $2.8409$\\\hline
$107  $&  $0.017576  $&  $1.0568  $&  $9.4869  $&   $4.9828$\\\hline
$148  $&  $0.015596  $&  $0.67029  $&  $0.36853  $&  $1.4036$\\\hline
$201  $&  $0.010888  $&  $0.4429  $&  $1.174  $&  $1.3537$\\\hline
$316  $&  $0.0064777  $&  $0.27551  $&  $1.1477  $&  $1.0493$\\\hline
$514  $&  $0.0041708  $&  $0.17191  $&  $0.90498  $&  $0.96945$\\\hline
$778  $&  $0.0025481  $&  $0.11492  $&  $1.1888  $&  $0.9716$\\\hline
$1197  $&  $0.0017212 $&  $0.07334  $&  $0.91061  $&  $1.0425$\\\hline
$1901  $&  $0.0011844 $&  $0.04755  $&  $0.80805  $&  $0.93682$\\\hline
$2859  $&  $0.00074065  $&  $0.030819  $&  $1.1504  $&  $1.0626$\\\hline
$4416  $&  $0.00050373  $&  $0.01985  $&  $0.88666  $&  $1.0118$\\\hline
$6834  $&  $0.00033144  $&  $0.01284  $&  $0.95861  $&  $0.99762$\\\hline
$10248  $&  $0.00021719  $&  $0.0084216  $&  $1.0431  $&  $1.041$\\\hline
$15443  $&  $0.00014117  $&  $0.0055306  $&  $1.0507  $&  $1.0254$\\\hline
\end{tabular}
\caption{Adaptive scheme for the Mini-element method using the local
estimators $\eta_T$. Parameter: $\theta=0.5$.}\label{tabla3}
\end{table}

\begin{table}[ht]
\centering
\begin{tabular}{|c|c|c|c|c|}
\hline
nv & $L^2$ error in $\bu$ & $\eta$ & order in $\bu$ & order in $\eta$\\\hline
$84  $&  $0.035974  $&  $4.2984  $&  &   \\\hline
$91  $&  $0.02702  $&  $2.3946  $&  $3.5759  $&  $7.3089$\\\hline
$102  $&  $0.015866  $&  $1.3891  $&  $4.6655  $&  $4.772$\\\hline
$118  $&  $0.0082372  $&  $0.82406  $&  $4.4988  $&  $3.5837$\\\hline
$160  $&  $0.0045485  $&  $0.46452  $&  $1.9503  $&  $1.8826$\\\hline
$237  $&  $0.0024787  $&  $0.25907  $&  $1.5452  $&  $1.4862$\\\hline
$385  $&  $0.0014542  $&  $0.1406  $&  $1.0991  $&  $1.2597$\\\hline
$636  $&  $0.00078342  $&  $0.075834  $&  $1.2322  $&  $1.2299$\\\hline
$992  $&  $0.00042274  $&  $0.041933  $&  $1.3878  $&  $1.3328$\\\hline
$1615  $&  $0.00026069  $&  $0.022199  $&  $0.99186  $& $1.3051$\\\hline
$2583  $&  $0.00012981  $&  $0.011858  $&  $1.4848  $&  $1.3352$\\\hline
$4154  $&  $7.3921e-005  $&  $0.0062463  $&  $1.1851  $& $1.3491$\\\hline
$6665  $&  $3.9114e-005  $&  $0.0032513  $&  $1.3463  $&  $1.381$\\\hline
$10447  $&  $1.9238e-005 $&  $0.0017148  $&  $1.5788  $&  $1.4235$\\\hline
$16629  $&  $1.0136e-005  $&  $0.00089668  $&  $1.3785  $&  $1.3948$\\\hline
$26283  $&  $5.559e-006  $&  $0.00046444  $&  $1.3122  $&  $1.4371$\\\hline
$40802  $&  $2.7269e-006  $&  $0.00024283  $&  $1.6195  $&  $1.4744$\\\hline
$64222  $&  $1.3931e-006  $&  $0.00012827  $&  $1.4806  $&  $1.4069$\\\hline
\end{tabular}
\caption{Adaptive scheme for the Hood-Taylor method using the local estimators
$\eta_T$. Parameter: $\theta=0.75$.}\label{tabla4}
\end{table}

In Tables \ref{tabla3} and \ref{tabla4} we show the results obtained by an adaptive procedure using the a posteriori error estimator \eqref{eq: total_estim}. The refinement process is standard: given $0<\theta<1$, a fixed parameter, suppose that $\mathcal T_k$ is the mesh in the $k$-step. If we enumerate the triangular elements such that $\mathcal T_k = \{T_i: i=1,\ldots,N_{el}\}$ with $\eta_{T_i}\ge\eta_{T_{i+1}}$, let $N_{ref,k}$ be the minimum integer such that
\[
    \sum_{i=1}^{N_{ref,k}}\eta_{T_i}^2\ge\theta\,\eta^2.
\]
Then, the mesh for the $k+1$-step is constructed in such a way that the
elements $T_{i}$, $i=1,\ldots,N_{ref,k}$ are refined. We report the
$L^2(\Omega)$-error in $\bu$ which, as before, is
computed in each step as the $L^2(\Omega)$-norm of the difference between the
discrete solution obtained in the current step and in the previous one of the
iterative process.

We observe that for both Mini-element and Hood-Taylor methods, the adaptive
process recovers the expected optimal order of convergence in $\bu$. In Figure
\ref{Figura_TH} we show the initial mesh and some of the meshes obtained in the
iterative process for Hood-Taylor method.

\begin{figure}
\centering
\includegraphics[width=.48\textwidth]{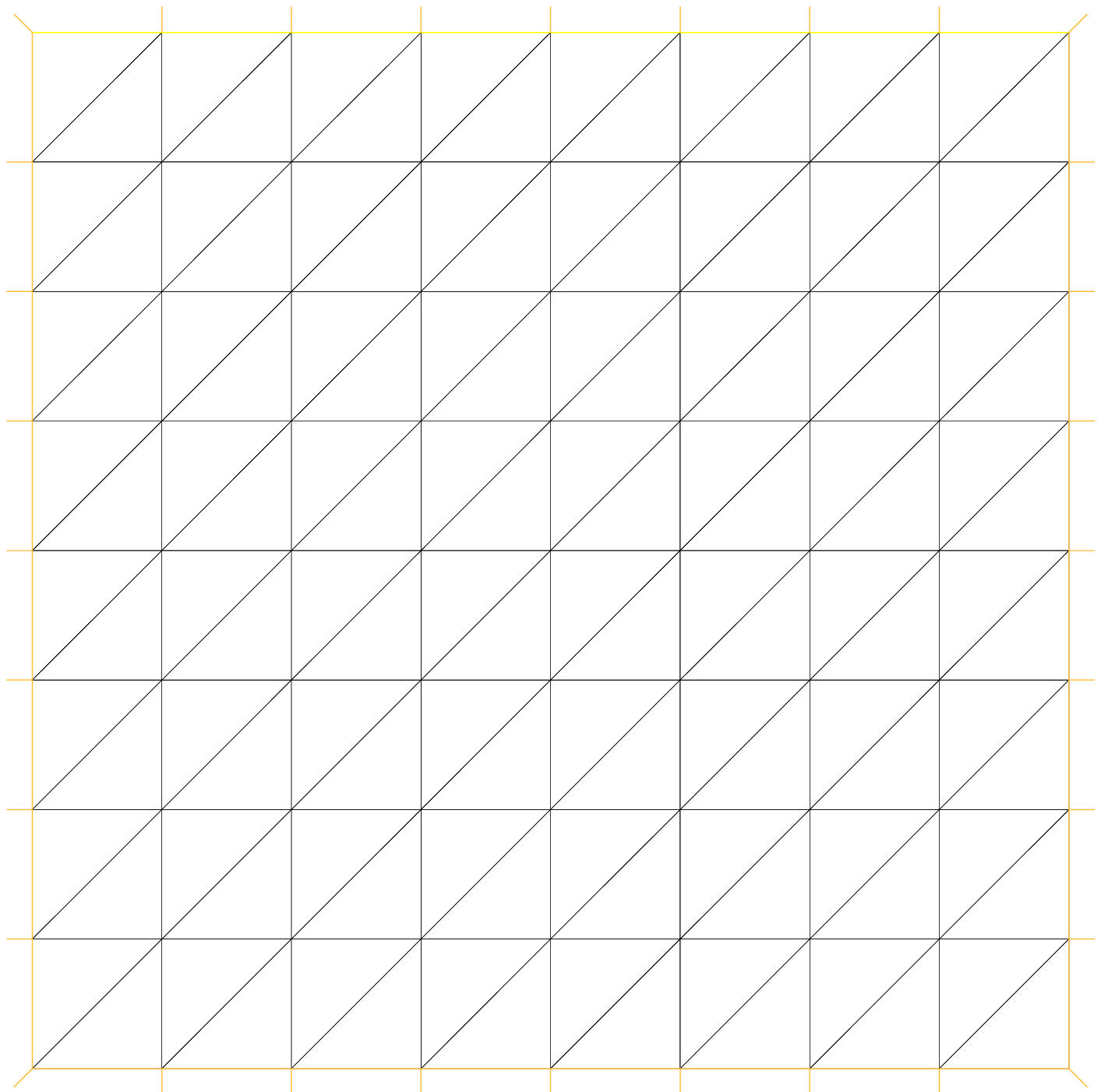}
\includegraphics[width=.48\textwidth]{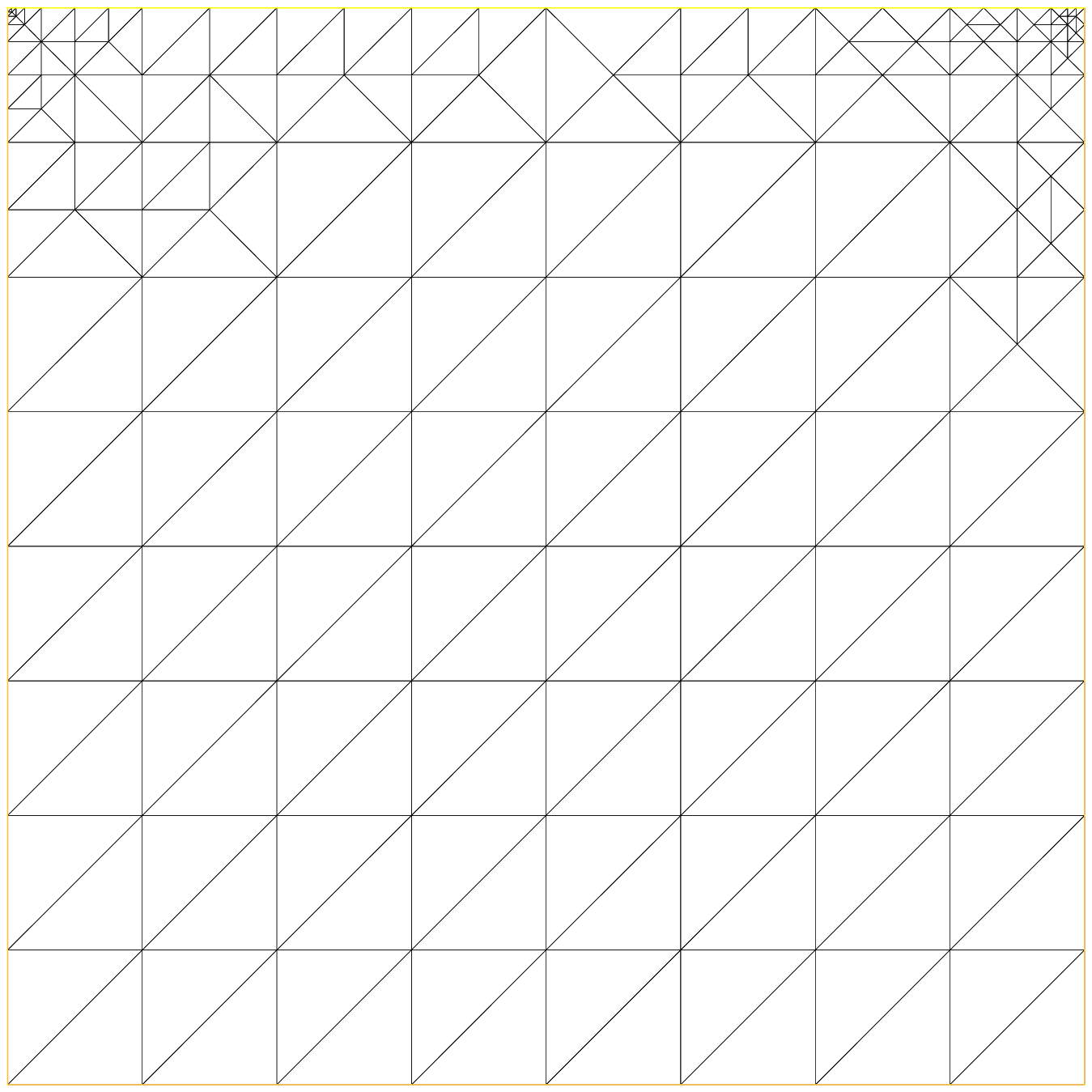}
\includegraphics[width=.48\textwidth]{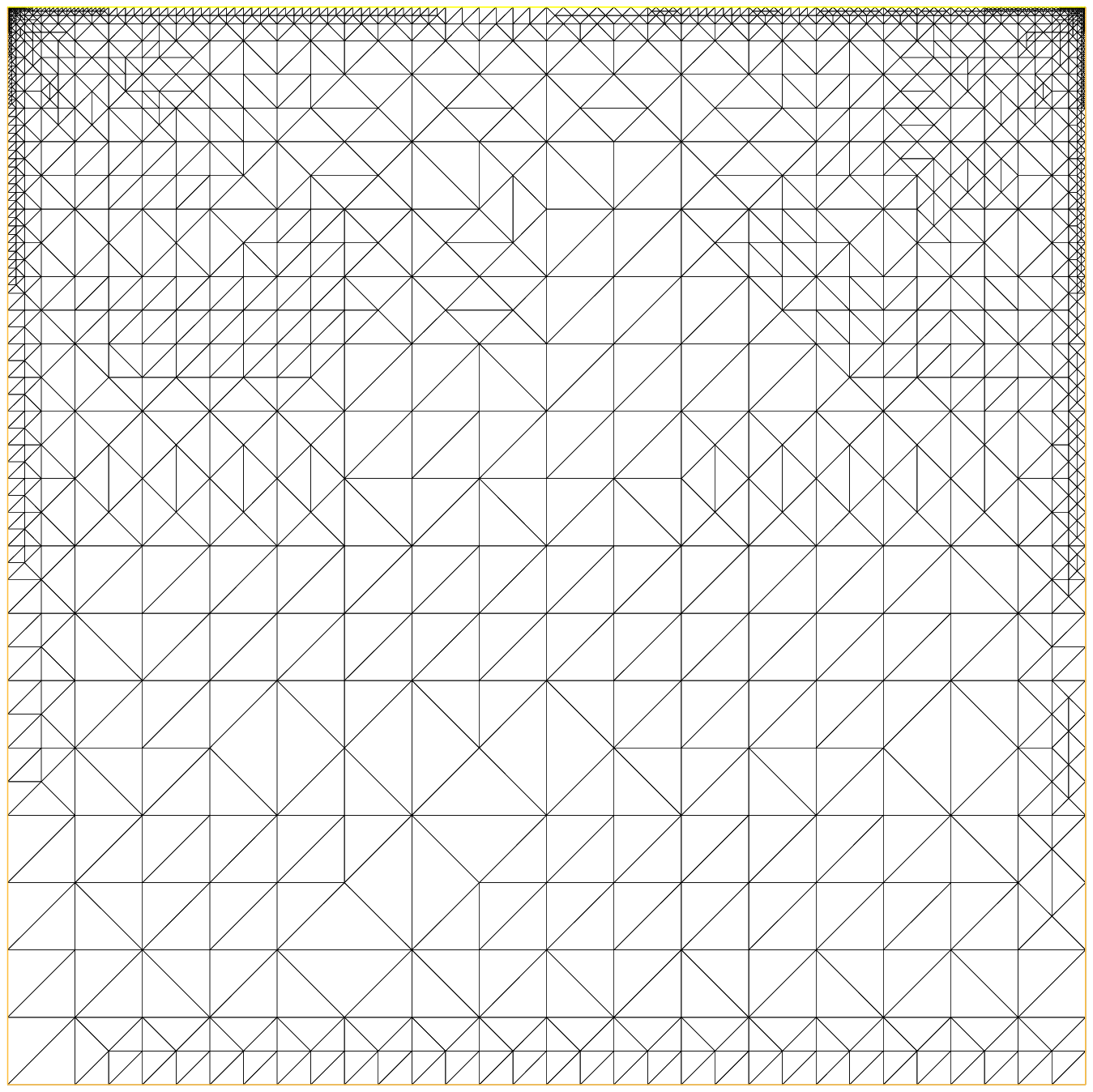}
\includegraphics[width=.48\textwidth]{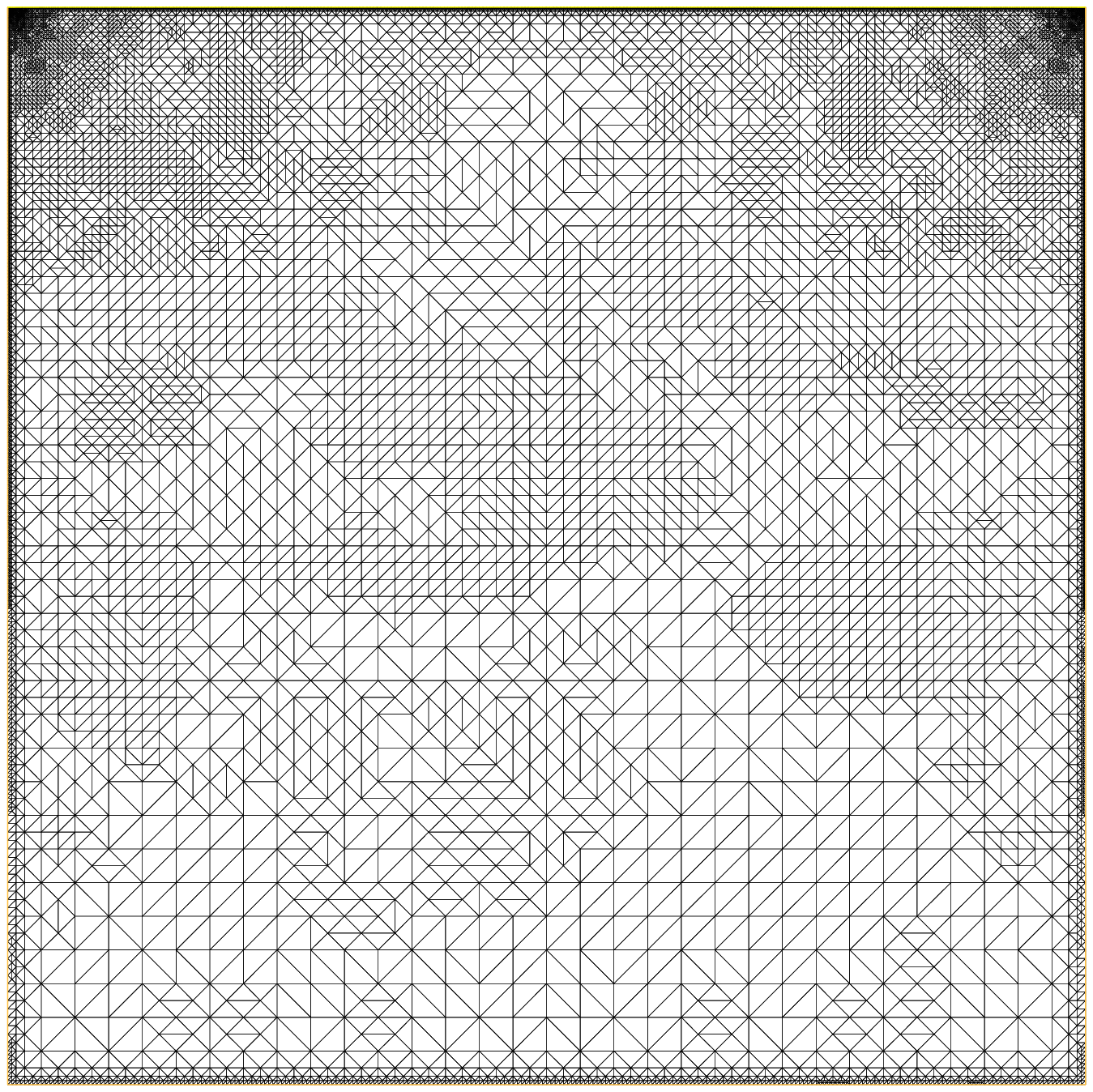}
\caption{Sequence of meshes for the Hood-Taylor adaptive process using the
local estimators $\eta_T$ with parameter $\theta=0.75$. Initial mesh and meshes
of iterations 5, 10 and 15.}\label{Figura_TH}
\end{figure}

\section*{Acknowledgments}
We thank Pablo De Napoli who suggested us the argument used in Proposition
3.3, and Pedro Morin for helpful discussions.

\bibliographystyle{plain}
\bibliography{ref}

\end{document}